\documentclass[12pt,a4paper]{amsart}
\usepackage{amssymb,amscd,amsthm,amsmath}
\usepackage{amsfonts}
\usepackage[top=35mm, bottom=35mm, left=30mm, right=30mm]{geometry}
\usepackage[colorlinks=true,citecolor=blue]{hyperref}
\usepackage{mathptmx}
\usepackage{eucal}
\usepackage{graphicx}
\usepackage{mathrsfs}
\usepackage{xcolor}
\usepackage[all]{xy}
\usepackage{bm}
\usepackage[pagewise]{lineno}\nolinenumbers

\newtheorem{thm}{Theorem}[section]
\newtheorem{cor}[thm]{Corollary}
\newtheorem{lem}[thm]{Lemma}
\newtheorem{prop}[thm]{Proposition}

\newtheorem{que}[thm]{Question}

\theoremstyle{definition}
\newtheorem{df}[thm]{Definition}
\newtheorem{ex}[thm]{Example}
\newtheorem{rem}[thm]{Remark}

\numberwithin{equation}{section}
\def\CC{\mathcal{C}}
\def\UU{\mathcal{U}}
\def\VV{\mathcal{V}}
\def\diam{\text{\rm diam}}
\def\Leb{\text{\rm Leb}}

\def\UU{\mathcal{U}}
\def\VV{\mathcal{V}}
\def\diam{\text{\rm diam}}
\def\Leb{\text{\rm Leb}} 
\def\logf{\log\frac{1}{\epsilon}}

\makeatletter

\newcommand{\Rmnum}[1]{\expandafter\@slowromancap\romannumeral #1@}
\makeatother

\begin{document}

\title{Metric mean dimension of factor maps}

\author{Rui Yang
}
\address
{School of Mathematics, Northwest University, Xi'an, Shaanxi, 710127, P.R. China}

\email{zkyangrui2015@163.com}

\renewcommand{\thefootnote}{}

\footnote{2020 \emph{Mathematics Subject Classification}: 37A05,  37A35, 37B40, 94A34.}

\keywords{Factor maps of dynamical system; weighted metric mean dimension; relative topological metric mean dimension;  random average metric mean dimension}
 
\renewcommand{\thefootnote}{\arabic{footnote}}
\setcounter{footnote}{0}

\begin{abstract}

Metric mean dimension is a metric-depedent quantity to characterize the topological complexity of systems with infinite topological entropy. In this paper, we investigate  metric mean dimension of factor maps.

(1) We introduce three  types of weighted metric mean dimensions  to characterize factor maps with infinite weighted topological entropy, and compare them with the metric mean dimensions of the factor system and the extension system. Furthermore, we establish  variational principles for weighted metric mean dimension.

(2) We introduce  relative  conditional metric mean dimension for factor maps with infinite relative topological conditional entropy, and prove that it coincides with  relative metric mean dimension.

(3) In the context of random  dynamical systems, the natural projection from the skew product to its driving system is a one-Lipschitz map.  We introduce  random average metric mean dimension  and use it to establish a topological Abramov-Rokhlin formula for the certain one-Lipschitz map. As an application, we obtain an inequality that links the metric mean dimensions of the driving system, the skew product system, and the inherent non-autonomous dynamical systems  from  the random dynamical system.

\end{abstract}


\maketitle
\pagestyle{plain}  
\section{Introduction}

  By a pair $(X,T)$ we mean a  topological dynamical system (TDS for short), where $X$ is a compact  metrizable topological  space and $T:X\rightarrow X$ is a homeomorphism map.   We sometimes use  a triplet $(X,d,T)$ to emphasize the compatible metric $d$ on $X$. Let $M(X,T)$, $E(X,T)$ denote the sets of $T$-invariant, $T$-ergodic  Borel probability measures on $X$, respectively.
  
  Topological entropy, introduced by Adler et al. \cite{akm65}, is a  vital topological invariant for describing the topological complexity of dynamical systems. It is a powerful tool in revealing chaotic phenomena of positive entropy systems and describing the "size" of fractal-like sets in dimension theory. However, we obtain no additional information about the dynamical behaviors of systems with infinite topological entropy. Inspired by the definition of box dimension in  fractal geometry, metric mean dimension was introduced by Lindenstrauss and Weiss \cite{lw00} to capture the dynamics of infinite entropy systems. 
  
  A map $\pi: X\rightarrow Y$ is called a \emph{factor map} between two dynamical systems $(X, T)$ and $(Y,S)$ if $\pi$ is a continuous surjective map satisfying $S\circ \pi(x)=\pi\circ T(x)$ for all $x\in X$; in this case,  one says that $(X,T)$ is  an $\emph{extension system}$ of $(Y,S)$, or $(Y,S)$ is a \emph{factor system} of $(X,T)$.  Additionally, if $\pi$ is a homeomorphism  map, $\pi$ is called a  \emph{conjugation map} between  $(X, T)$ and $(Y,S)$.

It is well-known that for a factor map between two dynamical systems, the topological entropy of the extension system is no less than the one of the factor system.  As for metric mean dimension,  for some certain factor maps between two dynamical systems,  the metric mean dimension of the factor system is  strictly greater than that of the extension system. Except this  unusual fact,   for factor maps few results are known regarding the metric mean dimensions of the factor system and extension system. For this reason, \emph{the goal of this paper is to investigate the metric mean dimensions of factor maps}. We mainly focus on three  entropy-like quantities that have been  used to characterize the complexity of factor maps: weighted topological entropy, relative topological conditional entropy, and random topological entropy.


Weighted topological entropy of factor maps has its roots in fractal geometry and concerns geometric objects with self-similarity. More precisely, let \( T \) be an endomorphism on the 2-dimensional torus \( \mathbb{T}^2 \), where \( T(u) = Au \pmod{1} \) for every \( u \in \mathbb{T}^2 \), and \( A = \text{diag}(m_1, m_2) \) with \( 2 \leq m_1 < m_2 \). In two seminal work, Bedford \cite{bed84} and McMullen \cite{mc84} independently calculated the Hausdorff dimension of a class of \( T \)-invariant subsets of \( \mathbb{T}^2 \) called \emph{self-affine Sierpiński gaskets}. Later, for any compact \( T \)-invariant subset \( K \subset \mathbb{T}^2 \), Kenyon and Peres \cite{kp96} proved that the Hausdorff dimension of \( K \) is given by the following variational principle:
\begin{align}\label{equ 1.1}
\dim_H(K) = \sup_{\substack{\mu \in {M}(\mathbb{T}^2, T) \\ \mu(K) = 1}} \left\{ \frac{1}{\log m_2} \, h_{\mu}(T) + \left( \frac{1}{\log m_1} - \frac{1}{\log m_2} \right) h_{\pi_{*}\mu}(S) \right\},
\end{align}
where \( \pi: (\mathbb{T}^2, T) \to (\mathbb{T}, S) \) is the factor map given by \( \pi:(x,y) \mapsto x \), and \( S(x) = m_1x \pmod{1} \). For any factor map \( \pi: (X,T) \to (Y,S) \) and a weight \( \mathbf{a} = (a_1, a_2) \) with \( a_1 > 0 \) and \( a_2 \geq 0 \), using Carathéodory-Pesin structures \cite{p97} Feng and Huang \cite{fw1616} introduced the weighted Bowen topological entropy, whose definition resembles that of Hausdorff dimension. Furthermore, they proved a variational principle for it \cite[Theorem 1.4]{fw1616} analogous to the equality \eqref{equ 1.1}:
\begin{align}\label{equ 1.2}
h_{{top}}^{\mathbf{a}}(T) = \sup_{\mu \in {M}(X,T)} \left\{ a_1h_{\mu}(T) + a_2h_{\pi_{*}\mu}(S) \right\},
\end{align}
where \( h_{{top}}^{\mathbf{a}}(T) \) denotes the weighted Bowen topological entropy of \( X \), \( h_{\mu}(T) \) denotes the measure-theoretic entropy of $\mu$, and \( \pi_{*}\mu = \mu \circ \pi^{-1} \) is the push-forward of \( \mu \) under \( \pi \).  

From the perspective of local entropy theory, for a family of open covers on the phase spaces of factor maps, Zhu \cite{z24} introduced the local weighted topological entropy in both topological and measure-theoretic settings, and related them via a local variational principle. Later, Tsukamoto \cite{t23} presented a new approach to weighted topological entropy, whose definition is analogous to the classical topological entropy defined by spanning sets. A variational principle for this new weighted topological entropy was obtained, which in turn proves its equivalence with Feng and Huang's definition. For known applications of these weighted topological entropies, one can refer to \cite{fw1616, t23, z24, t25}. Building on these work, we introduce the corresponding weighted metric dimensions of factor maps and pose the following question:
  
\emph{Question 1:
\begin{itemize}
\item [(1)] Can we establish variational principles for the weighted metric mean dimensions?
\item [(2)] What are the precise relations between the weighted metric mean dimensions and the metric mean dimensions of the factor systems and extension systems?
\item [(3)] Under which conditions, do these weighted metric mean dimensions coincide?
\end{itemize}}

Our results give a partial answer to Question 1.  

\begin{thm}\label{thm 1.1}
 Let  $\pi:(X,d_X, T)\rightarrow (Y,d_Y,S)$ be a factor map, and let \rm {\textbf{a}}$=(a_1,a_1)$  with $a_1> 0$ and $a_2\geq 0$.  If $\underline{\rm mdim}_M(S,Y,d_Y)=\overline{\rm mdim}_M(S,Y,d_Y)$, then 
\begin{align*}
\overline{\rm mdim}_M^{\textbf{a}}(T,X,\{d_X,d_Y\})=\max_{\mu \in M(X,T)}\{a_1 \overline{\rm {mdim}}_{\mu}(T,d_X) +a_2 \overline{\rm {mdim}}_{\pi_{*}\mu}(S,d_Y)\},
\end{align*}
where  $\overline{\rm mdim}_M^{\textbf{a}}(T,X,\{d_X,d_Y\})$ denotes the weighted  upper metric mean dimension of $X$ with respect to $\pi$, and  $\overline{\rm {mdim}}_{\mu}(T,d_X)$ denotes the measure-theoretic upper  metric mean dimension of $\mu$.
\end{thm}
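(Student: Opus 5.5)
We prove the two inequalities separately. The strategy is to work at a fixed scale $\epsilon$, where the weighted spanning/Bowen entropy of Tsukamoto and of Feng--Huang satisfies a variational principle. We then divide by $\log\frac1\epsilon$ and pass to the limit. The hypothesis $\underline{\rm mdim}_M(S,Y,d_Y)=\overline{\rm mdim}_M(S,Y,d_Y)$ is what lets the two limsup's combine.

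\emph{Scale-$\epsilon$ reduction.} Define the weighted quantity $h^{\mathbf a}(T,\epsilon)$ through weighted $(n,\epsilon)$-covers. In Tsukamoto's spanning formulation these are families of pairs of Bowen balls $B_n^{d_X}(x,\epsilon)\subset\pi^{-1}B^{d_Y}_{n}(y,\epsilon)$ weighted by $a_1$ and $a_2$. Then $\overline{\rm mdim}_M^{\mathbf a}=\limsup_{\epsilon\to0}h^{\mathbf a}(T,\epsilon)/\log\frac1\epsilon$. On the measure side we use a Katok-type or Rényi/Shapira-type local entropy $h_\mu(T,\epsilon)$ with $\overline{\rm mdim}_\mu(T,d_X)=\limsup_{\epsilon\to 0} h_\mu(T,\epsilon)/\log\frac1\epsilon$. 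We take as given the earlier measure-theoretic mean dimension framework and its standard variational principle $\overline{\rm mdim}_M=\max_\mu \overline{\rm mdim}_\mu$, together with upper semicontinuity in $\mu$. The key local inequalities to establish are:
\[
a_1h_\mu(T,\gamma\epsilon)+a_2h_{\pi_*\mu}(S,\gamma\epsilon)-C\le h^{\mathbf a}(T,\epsilon)\le \sup_{\mu}\{a_1h_\mu(T,\epsilon/\gamma)+a_2h_{\pi_*\mu}(S,\epsilon/\gamma)\}+C,
\]
with constants $\gamma,C$ independent of $\epsilon$.

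\emph{Lower bound.} The lower inequality follows by running Feng--Huang's mass distribution / weighted Shannon argument at fixed scale. For fixed $\mu$, dividing by $\log\frac1\epsilon$ and taking limsup gives $\overline{\rm mdim}^{\mathbf a}_M\ge \limsup_\epsilon[a_1h_\mu(T,\epsilon)+a_2h_{\pi_*\mu}(S,\epsilon)]/\log\frac1\epsilon$. This is at least $a_1\overline{\rm mdim}_\mu(T,d_X)+a_2\,\underline{\rm mdim}_{\pi_*\mu}(S,d_Y)$, which is not yet the desired bound. To close the gap we choose the sequence $\epsilon_k$ realizing the limsup for $\mu$. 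We then also bound the $S$-part from below using the equality $\underline{\rm mdim}_M(S)=\overline{\rm mdim}_M(S)$. We may perturb $\mu$ to $\mu'=(1-t)\mu+t\nu$, where $\nu$ is lifted from a measure on $Y$ realizing full mean dimension along \emph{every} scale, which the hypothesis permits. Affinity/concavity of local entropies then gives the bound up to an error that vanishes as $t\to0$. The supremum is attained because the right-hand functional is upper semicontinuous in $\mu$ and $M(X,T)$ is compact, so it is a max.

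\emph{Upper bound.} The upper inequality at scale $\epsilon$ is Tsukamoto's variational principle made quantitative. For each $\epsilon_k$ we pick $\mu_k$ nearly attaining the local sup and pass to a weak$^*$ limit $\mu$. The main obstacle is exchanging $\limsup_\epsilon$ with $\sup_\mu$ when two different scale-dependent terms are summed. Here the hypothesis is essential. Along the sequence, the $Y$-term satisfies $a_2h_{\pi_*\mu_k}(S,\epsilon_k)\le a_2\,{\rm mdim}_M(S)\log\frac1{\epsilon_k}+o(\log\frac1{\epsilon_k})$, because the limit exists. This allows us to split off and control the factor contribution while the $X$-term is handled by upper semicontinuity, via a Misiurewicz-type partition argument with boundary-null partitions of diameter $\le\epsilon$. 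I would first try to prove the fixed-scale inequality with $\sup_\mu$ replaced by ergodic measures via a Lindenstrauss--Tsukamoto style construction, and then glue scales via the $\mu'$ convex-combination trick above.
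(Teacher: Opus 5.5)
Your proposal works with the wrong objects, and the two steps where the real work happens fail as written. First, the quantity $\overline{\rm mdim}_M^{\mathbf a}(T,X,\{d_X,d_Y\})$ here is Zhu's local one, $\limsup_{\epsilon\to0}\frac{1}{\logf}\bigl[a_1\inf_{\diam(\UU,d_X)\le\epsilon}h_{top}(T,\UU)+a_2\inf_{\diam(\VV,d_Y)\le\epsilon}h_{top}(S,\VV)\bigr]$. It is not a Tsukamoto-type count of pairs of Bowen balls, and at the mean-dimension level the two genuinely differ. In the identity-map example of Remark \ref{rem 3.13} with weight $(\omega,1-\omega)$, where ${\rm mdim}_M(S,Y,d_Y)$ exists, the Tsukamoto-type quantity $\overline{\rm mdim}_M^{\omega}(X,T|\pi,\{d_X,d_Y\})$ equals $s_2$. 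Theorem \ref{thm 3.6}, however, forces the right-hand side of the statement to be at most $\omega s_1+(1-\omega)s_2<s_2$. So your fixed-scale upper inequality $h^{\mathbf a}(T,\epsilon)\le\sup_\mu\{\cdots\}+C$ is false for the quantity you define. No choice of $\gamma,C$ repairs it, since the gap grows like $\logf$.

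Second, $\overline{\rm mdim}_\mu(T,d_X)$ is not $\limsup_\epsilon h_\mu(T,\epsilon)/\logf$ for the fixed $\mu$. It is the supremum, over families $(\mu_\epsilon)_\epsilon\subset\mathrm{co}(E(X,T))$ converging weak$^*$ to $\mu$, of $\limsup_\epsilon F(\mu_\epsilon,\epsilon)/\logf$. This is the definition for which the max-form principle of Theorem \ref{thm 3.6} that you invoke is proved. With your definition, the upper-bound step has nothing to stand on. Knowing that $\mu_k\to\mu$ and that $\mu_k$ is nearly optimal at scale $\epsilon_k\to0$ gives no bound on $\limsup_k h_{\mu_k}(T,\epsilon_k)/\log\frac{1}{\epsilon_k}$ in terms of $\mu$ alone. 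Semicontinuity at a fixed scale does not cross scales, and nothing in your sketch justifies exchanging $\sup_\mu$ with $\limsup_\epsilon$. Bounding the $Y$-term by ${\rm mdim}_M(S,Y,d_Y)\log\frac{1}{\epsilon_k}$ does not help either. The target is $\overline{\rm mdim}_{\pi_*\mu}(S,d_Y)$ for the limit $\mu$, and its equality with ${\rm mdim}_M(S,Y,d_Y)$ is something you would have to prove.

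With the paper's definitions the proof is short. For $\overline{\rm mdim}_M^{\mathbf a}\le\max_\mu\{\cdots\}$, the hypothesis is not needed at all:
\begin{itemize}
\item Lemma \ref{lem 4.1} (ergodic version) and the Lebesgue-number sandwich \eqref{equ 4.2} give, for each $\epsilon$, an ergodic $\mu_\epsilon$ with $h^{\mathbf a}_{top}(T,X,\{d_X,d_Y\},\epsilon)-\epsilon<a_1h_{\mu_\epsilon}(T,\UU)+a_2h_{\pi_*\mu_\epsilon}(S,\VV)$ for all $\UU,\VV$ of diameter at most $\epsilon/8$.
\item Take $\epsilon_n$ realizing the limsup with $\mu_{\epsilon_n}\to\mu$. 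The step families $\lambda_\epsilon=\mu_{\epsilon_n}$ and $\nu_\epsilon=\pi_*\mu_{\epsilon_n}$ on $(\epsilon_{n+1},\epsilon_n]$ lie in $M_T(\mu)$ and $M_S(\pi_*\mu)$, since push-forwards of ergodic measures are ergodic.
\item The bound by $a_1\overline{\rm mdim}_\mu(T,d_X)+a_2\overline{\rm mdim}_{\pi_*\mu}(S,d_Y)$ is then read off from the definition; the factor $1/8$ is harmless. No semicontinuity argument is needed, and this $\mu$ is the maximizer.
\end{itemize}
Your remark that the hypothesis lets the two limsups combine is right, but it acts on the topological side and only in the reverse inequality. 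By the hypothesis and Proposition \ref{prop 2.1}, $\frac{1}{\logf}\inf_{\diam(\VV,d_Y)\le\epsilon}h_{top}(S,\VV)$ converges. Hence $\overline{\rm mdim}_M^{\mathbf a}(T,X,\{d_X,d_Y\})=a_1\overline{\rm mdim}_M(T,X,d_X)+a_2{\rm mdim}_M(S,Y,d_Y)$. Theorem \ref{thm 3.6}, applied separately to $(X,d_X,T)$ and $(Y,d_Y,S)$, then bounds each term of the functional at every $\mu$.

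Your perturbation $\mu'=(1-t)\mu+t\nu$ should be dropped. The hypothesis concerns topological $\epsilon$-entropies of $Y$; it does not produce a single invariant $\nu$ attaining ${\rm mdim}_M(S,Y,d_Y)$ at every scale. And an estimate at $\mu'$ would not control the $Y$-term of the functional at $\mu$ anyway.
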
  
  
\begin{thm}\label{thm 1.2}
Let  $\pi:(X,d_X, T)\rightarrow (Y,d_Y,S)$ be a factor map.  If $\overline{\rm mdim}_M({S},{Y},d_Y)=0$, then for any $\omega \in (0,1]$,
\begin{align*}
\overline{\rm mdim}_M^{(\boldsymbol{\omega, 1-\omega})}({T},{X},\{d_X,d_Y\})
=\overline{\rm {mdim}}_M^{(\boldsymbol{\omega, 1-\omega}),B}(T,X,\{d_X,d_Y\})
=\omega \cdot  \overline{\rm mdim}_M({T},{X},d_X),
\end{align*}
where $\overline{\rm mdim}_M({T},{X},d_X)$ denotes the upper metric mean dimension of $X$.
\end{thm}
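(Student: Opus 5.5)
The natural route is a sandwich: bound both weighted quantities from below by $\omega\cdot\overline{\rm mdim}_M(T,X,d_X)$ and from above by the same value. The weighted upper metric mean dimension $\overline{\rm mdim}_M^{\textbf{a}}$ is defined in the spanning-set style of Tsukamoto. Its Bowen-type counterpart $\overline{\rm mdim}_M^{\textbf{a},B}$ is defined in the Carathéodory--Pesin style of Feng--Huang. Both use a scale $\epsilon$, are normalized by $\log\frac1\epsilon$, and take $\limsup_{\epsilon\to0}$. I expect the paper to have already recorded, or to be easy to verify, that $\overline{\rm mdim}_M^{\textbf{a},B}\le\overline{\rm mdim}_M^{\textbf{a}}$, in analogy with $h^B_{top}(Z)\le h_{top}(Z)$. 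So the work reduces to two estimates:
\begin{align*}
\overline{\rm mdim}_M^{(\omega,1-\omega)}(T,X,\{d_X,d_Y\})&\le \omega\,\overline{\rm mdim}_M(T,X,d_X)+(1-\omega)\,\overline{\rm mdim}_M(S,Y,d_Y),\\
\overline{\rm mdim}_M^{(\omega,1-\omega),B}(T,X,\{d_X,d_Y\})&\ge \omega\,\overline{\rm mdim}_M(T,X,d_X).
\end{align*}
The hypothesis $\overline{\rm mdim}_M(S,Y,d_Y)=0$ kills the second term in the upper bound and closes the sandwich.

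\textbf{Upper bound.} Here I work at the level of $\epsilon$ and $N$. In Tsukamoto's definition, the weighted spanning quantity at scale $\epsilon$ and time $N$ is controlled by a product of a covering number for the extension at time $\lceil\omega N\rceil$ and a covering number for the factor at time $N$. The first factor comes from Bowen balls of $d_X$ of length $\omega N$. The second comes from $d_Y$-Bowen balls of length $N$, with the $X$-cover refined over each $Y$-piece. Concretely, I would cover $Y$ by $\#_Y(N,\epsilon)$ sets of $d_{Y,N}$-diameter $<\epsilon$. I would cover $X$ by $\#_X(\lceil\omega N\rceil,\epsilon)$ sets of $d_{X,\lceil\omega N\rceil}$-diameter $<\epsilon$. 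Intersecting their preimages gives an admissible cover with weighted cost at most
\[
\#_X(\lceil\omega N\rceil,\epsilon)\cdot\#_Y(N,\epsilon)^{1-\omega}.
\]
Taking $\frac1N\log$, then $\limsup_N$, dividing by $\log\frac1\epsilon$ and taking $\limsup_\epsilon$ gives the displayed upper bound, via subadditivity of $\limsup$ and the identity $\lim \frac1N\log\#_X(\omega N,\epsilon)=\omega\, S_X(\epsilon)$. If the definition uses spanning sets rather than covers, the same argument applies with $2\epsilon$ in place of $\epsilon$, which is harmless after normalization.

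\textbf{Lower bound, the main obstacle.} This requires a mass-distribution or Frostman-type argument for the Bowen quantity. My first attempt is a ``largest separated subset inside a fibre-ball'' estimate. Fix $\epsilon$ and $N$, and consider any cover by weighted Bowen balls $B^{\omega}(x,N,\epsilon)$, meaning sets that are $d_X$-Bowen of length $\omega N$ and whose $\pi$-images are $d_Y$-Bowen of length $N$. Each such ball contains at most one point of a $(\lceil\omega N\rceil,2\epsilon)$-separated set $E\subset X$. Hence any cover at stage $N$ must use at least $\#E$ balls, and its weighted sum is at least $\#E\,e^{-sN}$.

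The difficulty is that Carathéodory covers mix different lengths $N_i$. The plan to handle this is to avoid the single-$N$ argument and run the Feng--Huang style comparison instead. I would use the variational principle behind Theorem~\ref{thm 1.1}, or directly a measure $\mu$ built via Lindenstrauss--Tsukamoto so that $\overline{\rm mdim}_\mu(T,d_X)$ almost attains $\overline{\rm mdim}_M(T,X,d_X)$ at a chosen sequence of scales. I would then show that the Bowen weighted quantity is at least $\omega\cdot(\text{local entropy of }\mu\text{ at scale }\epsilon)$, using a Brin--Katok/entropy-distribution lower bound on $\mu$ of weighted Bowen balls. The factor term $(1-\omega)h_{\pi_*\mu}$ is nonnegative and can simply be dropped.

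The delicate step is to make the scale-$\epsilon$ local entropy estimate uniform enough to survive division by $\log\frac1\epsilon$ and $\limsup_\epsilon$. For this I would invoke the measure-theoretic metric mean dimension variational principle, in its $\limsup_\epsilon$ of a $\sup_\mu$ form, at each fixed scale.
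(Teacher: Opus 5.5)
Your sandwich rests on $\overline{\rm mdim}_M^{\textbf{a},B}\le\overline{\rm mdim}_M^{\textbf{a}}$. The paper does not record this, and it is false in general.

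In Definition~\ref{df 5.3}, $\overline{\rm mdim}_M^{(\omega,1-\omega)}$ is not Tsukamoto's quantity; that one is $\overline{\rm mdim}_M^{\omega}(X,T|\pi,\{d_X,d_Y\})$ of Definition~\ref{df 5.6}. It is the plain weighted sum $\limsup_{\epsilon\to0}\frac{1}{\logf}\bigl(\omega\inf_{\diam(\UU,d_X)\le\epsilon}h_{top}(T,\UU)+(1-\omega)\inf_{\diam(\VV,d_Y)\le\epsilon}h_{top}(S,\VV)\bigr)$. The Feng--Huang quantity, by contrast, follows the factor for the full time $\lceil(a_1+a_2)n\rceil=n$. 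For the identity map of Remarks~\ref{rem 3.13} and~\ref{rem 3.19} one gets $\overline{\rm mdim}_M^{(\omega,1-\omega),B}=s_2>\omega s_1+(1-\omega)s_2=\overline{\rm mdim}_M^{(\omega,1-\omega)}$.

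The correct analogue of $h^B_{top}\le h_{top}$ is Proposition~\ref{prop 4.14}, which compares the Bowen quantity with the capacity quantity built from $\UU^{\lceil\omega n\rceil}\vee\pi^{-1}\VV^{n}$. Since $N(\UU^{\lceil\omega n\rceil}\vee\pi^{-1}\VV^{n})\le N(\UU^{\lceil\omega n\rceil})\,N(\VV^{n})$, this bounds it by $\omega\,\overline{\rm mdim}_M(T,X,d_X)+\overline{\rm mdim}_M(S,Y,d_Y)$, with coefficient $1$, not $1-\omega$, on the factor. The exponent $1-\omega$ in your cost $\#_X(\lceil\omega N\rceil,\epsilon)\,\#_Y(N,\epsilon)^{1-\omega}$ has no counterpart in either counting definition. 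Under $\overline{\rm mdim}_M(S,Y,d_Y)=0$ this bound is enough, but it must be proved for the Bowen quantity directly.

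Meanwhile $\overline{\rm mdim}_M^{(\omega,1-\omega)}=\omega\,\overline{\rm mdim}_M(T,X,d_X)$ needs no sandwich. It follows at once from Proposition~\ref{prop 2.1} and $0\le\inf_{\diam(\VV,d_Y)\le\epsilon}h_{top}(S,\VV)=o(\logf)$. In your scheme the false bridging inequality was the only source of its lower bound.

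The lower bound $\overline{\rm mdim}_M^{(\omega,1-\omega),B}\ge\omega\,\overline{\rm mdim}_M(T,X,d_X)$ is the second gap. The fixed-scale Brin--Katok/mass-distribution estimate for weighted cylinders, uniform enough to survive $\limsup_{\epsilon\to0}\frac{1}{\logf}$, is only announced. It also cannot come from the single measure $\mu$ of Theorem~\ref{thm 3.6}: $\overline{\rm mdim}_\mu$ is built from families $\mu_\epsilon\to\mu$, and it is $\mu_\epsilon$, not $\mu$, that carries the entropy at scale $\epsilon$.

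No measures are needed. A set $A$ of the form (\ref{equ 4.6}) with $m(A)=n$ lies in an element of $\UU^{\lceil\omega n\rceil}$. Forgetting the $\VV$-constraint therefore gives $\mathcal{M}_{\lceil\omega N\rceil,s}(X,\UU)\le\mathcal{M}^{\textbf{a}}_{N,\omega s}(X,\{\UU,\VV\})$, hence $h^{\textbf{a},B}_{top}(T,X,\{\UU,\VV\})\ge\omega\,h^{B}_{top}(T,X,\UU)$; this is the paper's argument in Theorem~\ref{thm 4.13}.

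The mixed-length difficulty you noticed then has to be handled only once, for the unweighted quantity on the compact invariant space $X$. Bowen's concatenation argument does this: a finite cover by strings of lengths $n_i$ with $\sum_ie^{-n_is}<1$ yields $N(\UU^m)\le Ce^{sm}$. This gives $h^{B}_{top}(T,X,\UU)=h_{top}(T,\UU)$, i.e.\ $\overline{\rm mdim}_M^B(T,X,d_X)=\overline{\rm mdim}_M(T,X,d_X)$, which the paper quotes. With these two repairs your outline becomes the paper's proof (Corollary~\ref{cor 4.9} plus Theorem~\ref{thm 4.13}).
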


Two other entropy-like topological invariants used to characterize the complexity of factor maps are the \emph{relative topological entropy} \cite{lw97, ds02} and the \emph{relative topological conditional entropy} \cite{zz23}. Both of them  are closely related to the existence of measures of maximal relative entropy. For instance, the authors \cite{zz23}  proved that any factor map with zero relative topological conditional entropy admits invariant measures with maximal relative entropy. The relative topological conditional entropy is a fusion of the relative topological entropy \cite{lw97} and the topological conditional entropy \cite{mis76}. For factor maps with infinite relative topological entropy, Wu \cite{w23} introduced the relative metric mean dimension and established variational principles for it in terms of several types of relative measure-theoretic $\epsilon$-entropies of invariant measures. We introduce the relative  conditional metric mean dimension. This naturally raises the following question:

\emph{Question 2:  For a factor map,  do  the relative metric mean dimension and the relative topological  conditional  metric mean dimension coincide?}

\begin{thm}\label{thm 1.3}
Let $\pi:(X,d_X, T)\rightarrow (Y,S)$ be a factor map. Then 
$$\overline{\rm mdim}_M^{*}(X,T|\pi,d_X)=\overline{\rm mdim}_M(X,T|\pi,d_X),$$
where  $\overline{\rm mdim}_M^{*}(X,T|\pi,d_X)$ denotes the   relative  conditional  metric mean dimension of  $\pi$, and $\overline{\rm mdim}_M(X,T|\pi,d_X)$ denotes the relative   metric mean dimension of $\pi$.
\end{thm}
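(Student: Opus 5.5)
The plan is to prove two inequalities, using the definitions as I take them here (these should be checked against the paper's Section defining the two quantities). The relative metric mean dimension is built from the fibre $\epsilon$-entropy
$$h(T,\epsilon|\pi)=\lim_{n\to\infty}\frac1n\log\sup_{y\in Y}r_n(\pi^{-1}(y),\epsilon),$$
where $r_n(K,\epsilon)$ is the minimal cardinality of an $(n,\epsilon)$-spanning set of $K$ with respect to $d_X$. The relative conditional version is built from the localized quantity
$$h^*(T,\epsilon,\delta|\pi)=\lim_{n\to\infty}\frac1n\log\sup_{y\in Y}\sup_{x\in X}r_n\bigl(\pi^{-1}(y)\cap B_n(x,\delta),\epsilon\bigr),$$
in the spirit of Misiurewicz's conditional entropy. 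Then $\overline{\rm mdim}_M^{*}(X,T|\pi,d_X)$ is obtained by taking $\limsup_{\epsilon\to0}h^*(T,\epsilon,\delta|\pi)/\log\frac1\epsilon$ and then letting $\delta\to0$ (a monotone limit, since the quantity is nondecreasing in $\delta$). If the paper's order of limits or its choice between spanning and separated sets differs, the same argument should adapt. The usual comparison $r_n(\cdot,\epsilon)\le s_n(\cdot,\epsilon)\le r_n(\cdot,\epsilon/2)$ makes that choice immaterial after dividing by $\log\frac1\epsilon$.

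\emph{Easy inequality.} Since $\pi^{-1}(y)\cap B_n(x,\delta)\subset\pi^{-1}(y)$, spanning numbers are monotone under inclusion. So $h^*(T,\epsilon,\delta|\pi)\le h(T,\epsilon|\pi)$ for every $\delta$, and dividing by $\log\frac1\epsilon$ and taking limits gives $\overline{\rm mdim}_M^{*}(X,T|\pi,d_X)\le\overline{\rm mdim}_M(X,T|\pi,d_X)$.

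\emph{Reverse inequality.} Fix $\delta>0$. For each $y$ and $n$, choose an $(n,\delta)$-spanning set $E$ of $\pi^{-1}(y)$ of cardinality $r_n(\pi^{-1}(y),\delta)$. Then $\pi^{-1}(y)\subset\bigcup_{x\in E}B_n(x,\delta)$, and taking unions of spanning sets of the pieces gives
$$r_n(\pi^{-1}(y),\epsilon)\le r_n(\pi^{-1}(y),\delta)\cdot\sup_{x}r_n\bigl(\pi^{-1}(y)\cap B_n(x,\delta),\epsilon\bigr).$$
Take $\sup_y$, apply $\frac1n\log$, and let $n\to\infty$; the limits should be replaced by $\limsup$ if the definitions use them. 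This yields
$$h(T,\epsilon|\pi)\le h(T,\delta|\pi)+h^*(T,\epsilon,\delta|\pi).$$

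The key point is that $h(T,\delta|\pi)$ is finite for fixed $\delta$. Indeed, $r_n(\pi^{-1}(y),\delta)\le r_n(X,\delta)\le N(\delta/2)^n$, where $N(\delta/2)$ is the size of a finite open cover of $X$ by sets of diameter less than $\delta/2$. Hence $h(T,\delta|\pi)\le\log N(\delta/2)<\infty$, and so $h(T,\delta|\pi)/\log\frac1\epsilon\to0$ as $\epsilon\to0$. Dividing the displayed inequality by $\log\frac1\epsilon$ and taking $\limsup_{\epsilon\to0}$ therefore gives
$$\overline{\rm mdim}_M(X,T|\pi,d_X)\le\limsup_{\epsilon\to0}\frac{h^*(T,\epsilon,\delta|\pi)}{\log\frac1\epsilon}$$
for every $\delta$. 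Letting $\delta\to0$ concludes the proof.

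\emph{Main obstacle.} The step I expect to need care is the interchange of $\sup_y$ with the product bound and with the $n$-limit. The sup of a product is at most the product of the sups, which suffices. If the paper's definitions use $\limsup_n$, one still has $\limsup(a_n+b_n)\le\limsup a_n+\limsup b_n$, so this is harmless. Beyond that, the only thing to check is that the paper's precise definition of $\overline{\rm mdim}_M^{*}$ puts the $\delta$-limit outside the $\epsilon$-limit, which the argument requires.
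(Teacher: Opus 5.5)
Your argument is correct and is essentially the paper's proof. The paper fixes an open cover $\mathcal{V}$ of $X$, and the elements of $\mathcal{V}^n$ play the role of your Bowen balls $B_n(x,\delta)$. The easy direction comes from $N(\mathcal{U}^n|\mathcal{V}^n\vee\pi)\le N(\mathcal{U}^n|\pi)$. The reverse direction comes from $N(\mathcal{U}^n|\pi)\le N(\mathcal{V}^n|\pi)\cdot N(\mathcal{U}^n|\mathcal{V}^n\vee\pi)$, together with $h_{top}(T,\mathcal{V}|\pi)\le h_{top}(T,\mathcal{V})<\infty$, which vanishes after dividing by $\log\frac{1}{\epsilon}$. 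Your bound holds for each fixed $\delta$, just as the paper's holds for each $\mathcal{V}$ (which is how the paper shows its definition does not depend on $\mathcal{V}$), so the order of the $\delta$- and $\epsilon$-limits needs no special care.
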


It is well-known \cite[Theorem 17]{bow71} that for a factor map between two dynamical systems, the topological entropy of the extension system is bounded above by the sum of the topological entropy of the factor system and the relative topological entropy. We provide an example of a Lipschitz factor map between two dynamical systems where the analogous Bowen's factor formula for metric mean dimension fails. This means one cannot expect the classical Bowen factor formula is valid for metric mean dimension. Hence, we turn to a certain one-Lipschitz factor map generated by a random dynamical system. One may think of random dynamical systems as topological dynamical systems with some disturbances on  phase spaces. More precisely, let \( (\Omega, \mathcal{F}, \mathbb{P}, \theta) \) be a measure-preserving system. A continuous random dynamical system \( X \) over \( (\Omega, \mathcal{F}, \mathbb{P}, \theta) \) is a map \( T: \mathbb{Z} \times \Omega \times X \rightarrow X \) that is continuous in \( x \), measurable in \( \omega \), and satisfies the cocycle property: for all \( \omega \in \Omega \), \( m, n \in \mathbb{Z} \), \( T_{\omega}^{n+m}(x) = T_{\theta^n \omega}^{m} \circ T_{\omega}^{n}(x) \), where \( T_{\omega} = T(1, \omega, \cdot) \) and the iteration is given by
\[
T_{\omega}^n := 
\begin{cases} 
T_{\sigma^{n-1}\omega} \circ \cdots \circ T_{\sigma\omega} \circ T_{\omega}, & \text{for } n \geq 1 \\
\text{id}, & \text{for } n = 0 \\
(T_{\sigma^{n}\omega})^{-1} \circ \cdots \circ (T_{\sigma^{-2}\omega})^{-1} \circ (T_{\sigma^{-1}\omega})^{-1}, & \text{for } n \leq -1.
\end{cases}
\]
The cocycle property associates a dynamical system on \( \Omega \times X \) via the skew product transformation \( \Theta(\omega, x) = (\theta\omega, T_{\omega}x) \). Moreover, varying \( \omega \in \Omega \) yields a family of non-autonomous dynamical systems \( \{(X, \{T_\omega^n\}_n)\}_{\omega \in \Omega} \). Roughly speaking, the random topological entropy of \( X \) is the average of the topological entropies of these non-autonomous systems with respect to \( \mathbb{P} \). Notice that the natural projection \( \pi_{\Omega}: (\Omega \times X, d_{\Omega} \times d_X) \rightarrow (\Omega, d_{\Omega}) \) onto its first coordinate is a Lipschitz factor map. If no disturbances are imposed on \( (X, d_X, T) \), i.e., \( T_{n,\omega}x = T^n x \), the product formula of metric mean dimension   \cite[Proposition 3.12]{yz25} shows that
\begin{align}\label{equ 4.9}
\overline{\text{mdim}}_M(\Theta, \Omega \times X, d_{\Omega} \times d_X) = \text{mdim}_M(\theta, \Omega, d_{\Omega}) + \overline{\text{mdim}}_M(T, X, d_{X}),
\end{align}
where \( d_{\Omega} \times d_X \) is the product metric on \( \Omega \times X \). Then it is natural to ask the following question:

\emph{Question 3: Let $\{T_{\omega}\}_{\omega}$ be a family of continuous transformations on $X$. Is there an analogous equality to (\ref{equ 4.9}) that links the metric mean dimensions of $\Omega \times X$ and $\Omega$?}

Inspired by the work \cite{b99, rgy25}, we introduce the random average metric mean dimension for random dynamical systems and establish a topological Abramov-Rokhlin formula, which provides a positive answer to Question 3.

\begin{thm} \label{thm 1.4}
Let $(\Omega, d_{\Omega}, \theta)$ be a TDS such that ${\rm mdim}_M(\theta, \Omega, d_{\Omega}) < \infty$. Let $T = (T_{n,\omega})$ be a random dynamical system over the measure-preserving system $(\Omega, \mathcal{B}(\Omega), \mathbb{P}, \theta)$. Then 
\begin{align*}
\overline{\rm mdim}_M(\Theta, \Omega \times X, d_{\Omega} \times d_X) = {\rm mdim}_M(\theta, \Omega, d_{\Omega}) + \overline{\rm Amdim}_M(T, X, \{d_{\Omega}, d_X\}),
\end{align*}
where $\overline{\rm Amdim}_M(T, X, \{d_{\Omega}, d_X\})$ denotes the random average upper metric mean dimension of $X$.
\end{thm}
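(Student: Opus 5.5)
The plan is to work at a fixed scale $\epsilon$ and pass to the limit $\epsilon\to0$ only at the end. Throughout, Bowen balls in $\Omega\times X$ are compared with fibered Bowen balls.

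I will use the definition of $\overline{\rm Amdim}_M(T,X,\{d_\Omega,d_X\})$ as
$$\limsup_{\epsilon\to0}\frac{1}{\log\frac1\epsilon}\lim_{n\to\infty}\frac1n\int_\Omega \log r_\omega(n,\epsilon)\,d\mathbb P(\omega),$$
where $r_\omega(n,\epsilon)$ is the minimal cardinality of an $(n,\epsilon)$-spanning (or separated) set for the non-autonomous system $\{T^k_\omega\}$ with respect to $d_X$. The metric on $\Omega\times X$ is $d_\Omega\times d_X=\max\{d_\Omega,d_X\}$. The $n$-th Bowen metric of $\Theta$ is then
$$\max_{0\le k<n}\max\{d_\Omega(\theta^k\omega,\theta^k\omega'),\,d_X(T^k_\omega x,T^k_{\omega'}x')\}.$$
Because $\omega\mapsto T_\omega$ is only measurable, I will need a compact or Lusin-type handling of the $\omega$-dependence. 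The key decomposition is that spanning $\Omega\times X$ at scale $\epsilon$ requires spanning $\Omega$ under $\theta$ at scale $\epsilon$, and spanning each fiber.

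\emph{Lower bound.} Fix an $(n,\epsilon)$-separated set $E\subset\Omega$ for $\theta$. For each $\omega\in E$, fix an $(n,\epsilon)$-separated set $F_\omega\subset X$ for the fiber system at $\omega$. Then $\{(\omega,x):\omega\in E,x\in F_\omega\}$ is $(n,\epsilon)$-separated for $\Theta$. This gives
$$\log s_\Theta(n,\epsilon)\ge \log s_\theta(n,\epsilon)+\min_{\omega\in E}\log s_\omega(n,\epsilon).$$
A minimum over $E$ is too weak. Instead, I would separate a set of large measure: the function $g_n(\omega)=\frac1n\log s_\omega(n,\epsilon)$ is subadditive-like along orbits, so Kingman's theorem or an averaging argument applies. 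I would then pick $E$ inside $\{g_n\ge\int g_n-\delta\}$. For this I need that $\theta$ restricted to a set of large $\mathbb P$-measure still has $\frac1n\log s\approx$ the full growth rate at scale $\epsilon$, up to a $\delta\log\frac1\epsilon$ error. This is where the hypothesis ${\rm mdim}_M(\theta,\Omega,d_\Omega)<\infty$ and the existence of the limit (lower equals upper for $\Omega$) enter. Dividing by $\log\frac1\epsilon$, I would take the $\limsup$ on the $X$-part and the limit on the $\Omega$-part, so that the two terms add.

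\emph{Upper bound.} Cover $\Omega$ by an $(n,\epsilon)$-spanning set $E$ for $\theta$. For each Bowen ball $B_n(\omega_i,\epsilon)$, the fibers over nearby $\omega$ differ, so I will use the spanning property in the product coordinates. A point $(\omega,x)$ is $2\epsilon$-close to $(\omega_i,y)$ only if $d_X(T^k_\omega x,T^k_{\omega_i}y)$ is small. Here the maps at $\omega$ and $\omega_i$ differ, and the product structure does not directly help. The cleanest route is the paper's ``inspired by \cite{b99,rgy25}'' approach: Bogenschütz-type relative entropy via open covers. Concretely, I would prove
$$\lim_n\frac1n\log r_\Theta(n,\epsilon)\le \lim_n\frac1n\log r_\theta(n,\epsilon/2)+\lim_n\frac1n\int\log r_\omega(n,\epsilon/2)\,d\mathbb P+o(\log\tfrac1\epsilon).$$
The proof would go through a variational principle for Kifer's relative $\epsilon$-entropy, or through a Misiurewicz-style argument building $\Theta$-spanning sets from fiber spanning sets over a partition of $\Omega$ into small sets. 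Dividing by $\log\frac1\epsilon$ and taking $\limsup$ then gives $\le$, using that the $\Omega$-limit exists.

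The main obstacle is the upper bound, namely controlling the fiber dependence on $\omega$ within a Bowen ball of $\Omega$ when $T_\omega$ is merely measurable in $\omega$. I would first try to restrict to the definitions in the paper (likely the fiber quantity is defined with $\Omega$-uniform covers adapted to $d_\Omega\times d_X$), so that the fiber count is measured within $\Omega$-Bowen balls directly. That would reduce the upper bound to a product count at scale $\epsilon$, with errors only at scale $\epsilon/2$. Those errors disappear after normalizing by $\log\frac1\epsilon$.
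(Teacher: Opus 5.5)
There is a genuine gap at the very first step: you use the wrong definition of $\overline{\rm Amdim}_M(T,X,\{d_\Omega,d_X\})$, and with your definition the identity is false. Your $\mathbb{P}$-integral $\limsup_{\epsilon\to0}\frac{1}{\log\frac1\epsilon}\limsup_n\frac1n\int\log r_n(X,d_X,\omega,\epsilon)\,d\mathbb{P}$ is the paper's random metric mean dimension $\mathbb{E}_{\mathbb{P}}\overline{\rm mdim}_M(T,X,d_X)$ (Definition \ref{df 4.21}). The random \emph{average} metric mean dimension (Definition \ref{df 5.5}) does not involve $\mathbb{P}$ at all. It fixes maximal $((d_\Omega)_n,\epsilon)$-separated sets $\Omega_{n,\epsilon}$ and uses
\[
\overline{s}_n(T,\epsilon)=\frac{1}{s(\Omega,(d_\Omega)_n,\epsilon)}\sum_{\omega\in\Omega_{n,\epsilon}}s_n(X,d_X,\omega,\epsilon),
\]
a uniform arithmetic mean of fiber counts, with the logarithm taken after averaging (and likewise $\overline{r}_n$). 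Both $\overline{\rm mdim}_M(\Theta,\Omega\times X,d_\Omega\times d_X)$ and ${\rm mdim}_M(\theta,\Omega,d_\Omega)$ are independent of $\mathbb{P}$, but your right-hand side is not, so your identity cannot hold for every invariant $\mathbb{P}$.

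Concretely, take Example \ref{ex 5.4} (fiber space $X^{\mathbb{Z}}$) with $\overline{\rm dim}_B(X,d)>0$. Then ${\rm mdim}_M(\theta,\Omega,d_\Omega)=0$ and $\overline{\rm mdim}_M(\Theta)\ge2\,\overline{\rm dim}_B(X,d)$, because the fiber over the fixed point $(\dots,2,2,\dots)$ is invariant and carries $\sigma^2$. By the remark after that example, however, $\mathbb{P}=\delta_{(\dots,1,1,\dots)}$ gives $\mathbb{E}_{\mathbb{P}}\overline{\rm mdim}_M=\overline{\rm dim}_B(X,d)$, and even the Bernoulli measure gives at most $\frac32\overline{\rm dim}_B(X,d)$. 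So the fixed-scale upper bound you hope to get from a relative variational principle is simply false. Your lower-bound step also breaks: it needs $\theta$, restricted to a set of large $\mathbb{P}$-measure, to keep the full growth rate of $s(\Omega,(d_\Omega)_n,\epsilon)$. That fails as soon as $\mathbb{P}$ is a point mass at a fixed point and ${\rm mdim}_M(\theta,\Omega,d_\Omega)>0$. Kingman's theorem and Lusin-type arguments are not what is missing.

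With the correct definition the argument is an elementary count at fixed $(n,\epsilon)$. Your own lower-bound construction already contains it if you keep the sum instead of passing to $\min_{\omega\in E}$. Let $E_{n,\omega}$ be a $(d^\omega_n,\epsilon)$-separated subset of $X$ of largest cardinality. Then $\bigcup_{\omega\in\Omega_{n,\epsilon}}\{\omega\}\times E_{n,\omega}$ is $((d_\Omega\times d_X)_n,\epsilon)$-separated, so
\[
s(\Omega\times X,(d_\Omega\times d_X)_n,\epsilon)\ge\sum_{\omega\in\Omega_{n,\epsilon}}s_n(X,d_X,\omega,\epsilon)=s(\Omega,(d_\Omega)_n,\epsilon)\,\overline{s}_n(T,\epsilon).
\]
This is exactly why the definition averages over a maximal separated set. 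Since $\Omega_{n,\epsilon}$ is also spanning, the paper obtains $r(\Omega\times X,(d_\Omega\times d_X)_n,\epsilon)\le s(\Omega,(d_\Omega)_n,\epsilon)\,\overline{r}_n(T,\epsilon)$. It concludes using $\limsup_n(a_n+b_n)\ge\liminf_n a_n+\limsup_n b_n$ and ${\rm mdim}_M(\theta,\Omega,d_\Omega)=\lim_{\epsilon\to0}\frac{1}{\log\frac1\epsilon}\liminf_n\frac1n\log s(\Omega,(d_\Omega)_n,\epsilon)$.

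The obstacle you flagged is real, though. The spanning inequality tacitly requires that a $(d^\omega_n,\epsilon)$-spanning set also $\epsilon$-shadows the $T_{\omega'}$-orbits for every $\omega'$ in the $((d_\Omega)_n,\epsilon)$-ball around $\omega$. This holds in Example \ref{ex 5.4}: for $\epsilon<1$ such $\omega'$ agree with $\omega$ in coordinates $0,\dots,n-1$, so $T^k_{\omega'}=T^k_\omega$ for $k<n$. The paper does not justify it for a general, merely measurable, $\omega\mapsto T_\omega$.
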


As an application of the above Abramov-Rokhlin formula, we establish a quantitative relation between the metric mean dimensions of the skew product, the driving system, and these non-autonomous dynamical systems (i.e., Corollary \ref{cor 5.8}).

The rest of this paper is organized as follows. In Section \ref{sec 2}, we recall the definitions of metric mean dimension. In Section \ref{sec 3}, we introduce the weighted metric mean dimension and prove Theorems \ref{thm 1.1} and \ref{thm 1.2}. In Section \ref{sec 4}, we introduce the relative  conditional metric mean dimension and prove Theorem \ref{thm 1.3}. In Section \ref{sec 5}, we introduce the random average metric mean dimension and prove Theorem \ref{thm 1.4}.

\section{Preliminary}\label{sec 2}
In this section, we recall the precise definitions of metric mean dimension.

Let $(X,d,T)$ be a TDS. One says that a set  $E\subset X$ is a $(d,\epsilon)$-spanning set of $X$ if for any $x\in E$, there exists $y \in E$ such that $d(x,y)\leq \epsilon $; a  set $F\subset X$ is a $(d,\epsilon)$-separated set of $X$ if for any distinct $x,y \in F$, one has $d(x,y)>\epsilon$.  Denote by $r(X,d,\epsilon)$ and $s(X,d,\epsilon)$ the smallest cardinality of $(d,\epsilon)$-spanning sets  of $X$ and the  largest cardinality  of  $(d,\epsilon)$-separated sets of $X$, respectively.

Given $n\in \mathbb{N}$ and $x,y \in X$, we define a $n$-th Bowen metric on $X$ as
$$d_n(x,y)=\max_{0\leq j \leq n-1}d(T^jx,T^jy).$$
Then the Bowen ball of $x$ with radius $\epsilon >0$ in the Bowen metric $d_n$ is given by
$$B_n(x,\epsilon):=\{y\in X:d_n(x,y)<\epsilon\}.$$
The classical topological entropy of $X$ is defined by
\begin{align*}
h_{top}(T,X)=&\lim_{\epsilon \to 0}\limsup_{n \to \infty}\frac{ \log r(X,d_n,\epsilon)}{n}\\
=&\lim_{\epsilon \to 0}\limsup_{n \to \infty}\frac{ \log s(X,d_n,\epsilon)}{n}.
\end{align*}

To  classify TDSs with infinite topological entropy,  inspired by  the definition of  the Box dimension in fractal geometry, Lindenstrauss and Weiss \cite{lw00} introduced the metric mean dimension of $X$. More precisely, the upper metric mean dimension of $X$ is defined by
\begin{align*}
\overline{{\rm mdim}}_M(T,X,d)=&\lim_{\epsilon \to 0}\frac{1}{\logf}\limsup_{n \to \infty}\frac{ \log r(X,d_n,\epsilon)}{n}\\
=&\lim_{\epsilon \to 0}\frac{1}{\logf}\limsup_{n \to \infty}\frac{ \log s(X,d_n,\epsilon)}{n}.
\end{align*}

Replacing $\limsup_{\epsilon \to 0}$ by $\liminf_{\epsilon \to 0}$, one defines the corresponding lower metric mean dimension  $\underline{{\rm mdim}}_M(T,X,d)$ of $X$. We denote by ${{\rm mdim}}_M(T,X,d)$ the metric mean dimension of $X$ if the upper and lower metric mean dimensions of $X$ coincide. Clearly, every TDS with finite topological entropy has zero metric mean dimension. Besides, unlike the classical topological entropy, metric mean dimension depends on the compatible metrics on $X$. Therefore, it is a  metric-depedent quantity to capture the dynamics of infinite entropy systems. 

 An equivalent definition  of metric mean dimension is  given using the language of open covers \cite[Proposition 3.3]{yz25}. Let $\mathcal{C}_X^o$ denote the set of all finite open covers of $X$.  Given  $\UU\in \mathcal{C}_X^o$, the \emph{diameter} of $\UU$, denoted by $\diam (\UU,d)$, is the  maximum  of the diameters of the elements in $\UU$; the Lebesgue number of $\UU$, denoted by $\Leb(\UU,d)$, is the largest  positive real number $\epsilon$ such that each $d$-open ball $B_d(x,\epsilon)$ is contained in some element of $\UU$.   We drop  the metric $d$  for the two notations $\diam (\UU,d)$ and $\Leb(\UU,d)$  if  the metric on  underlying space is clear. 

 Given  a non-empty subset $Z\subset X$, we define $N(\UU,Z)$ as the minimal cardinality of the subfamily of $\UU$ needed to cover the set $Z$. We write $N(\UU):= N(\UU,X)$ if $Z=X$.  Denote by
$$\UU^n=\vee_{j=0}^{n-1} T^{-j}\UU:=\{\cap_{j=0}^{n-1}T^{-j}U_{i_j}:U_{i_j} \in \UU,~ 0\leq j \leq n-1\}$$
the $n$-th join of $\UU$. The \emph{topological entropy of $\UU$} is defined by 
$$h_{top}(T,\UU)=\lim_{n \to \infty}\frac{\log N(\UU^n)}{n}.$$

\begin{prop} \label{prop 2.1}
Let $(X,d,T)$ be a TDS.  Then
\begin{align*}
\overline{{\rm mdim}}_M(T,X,d)=&\limsup_{\epsilon \to 0}\frac{1}{\logf}\inf_{\diam (\UU,d)\leq \epsilon}h_{top}(T,\UU)\\
\underline{{\rm mdim}}_M(T,X,d)=&\liminf_{\epsilon \to 0}\frac{1}{\logf}\inf_{\diam (\UU,d)\leq \epsilon}h_{top}(T,\UU),
\end{align*}
where the infimum ranges over the set of  $\UU\in \mathcal{C}_X^o$ with $\diam (\UU)\leq \epsilon$.
\end{prop}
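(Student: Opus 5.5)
The plan is to compare, at each fixed scale, the open-cover quantity $\inf_{\diam(\UU)\leq\epsilon}h_{top}(T,\UU)$ with the spanning and separated quantities that define $\overline{\rm mdim}_M$ and $\underline{\rm mdim}_M$. Two sandwich inequalities will do this, with only a change of scale by a fixed factor. A constant-factor change of scale is invisible after dividing by $\log\frac1\epsilon$, since $\log\frac{1}{c\epsilon}/\log\frac1\epsilon\to1$. Taking $\limsup$ and $\liminf$ as $\epsilon\to 0$ then gives both equalities at once. Throughout I write $h(\epsilon):=\limsup_{n}\frac1n\log r(X,d_n,\epsilon)$; this is monotone in $\epsilon$, and the separated-set version is comparable to it, via $r(X,d_n,\epsilon)\le s(X,d_n,\epsilon)\le r(X,d_n,\epsilon/2)$.

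\emph{Upper bound for covers.} Fix $\epsilon$ and let $E$ be a $(d_n,\epsilon/4)$-spanning set of minimal cardinality. Since $X$ is compact, we can take a finite cover $\UU$ by open balls of radius $\epsilon/4$ with $\diam(\UU)\le\epsilon/2\le\epsilon$. Its Lebesgue number is some $\delta(\epsilon)>0$, so this route does not compare $N(\UU^n)$ directly with spanning sets at scale $\epsilon$.

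\emph{Lower bound for covers.} The cleaner route runs the other way. For any $\UU$ with $\diam(\UU)\le\epsilon$, each element of $\UU^n$ has $d_n$-diameter at most $\epsilon$. Hence an $(d_n,\epsilon)$-separated set meets each element of $\UU^n$ in at most one point, which gives $s(X,d_n,\epsilon)\le N(\UU^n)$. This yields $\limsup_n\frac1n\log s(X,d_n,\epsilon)\le h_{top}(T,\UU)$. Taking the infimum over $\UU$ gives the lower bound $\inf_{\diam\UU\le\epsilon}h_{top}(T,\UU)\ge \limsup_n\frac1n\log s(X,d_n,\epsilon)$.

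\emph{Upper bound for covers, continued.} For the reverse inequality I need a cover of diameter at most $\epsilon$ whose $N(\UU^n)$ is controlled by spanning numbers at a comparable scale. Take $\UU$ to be a finite cover by open balls $B(x_i,\epsilon/2)$, slightly shrunk so that the diameter is at most $\epsilon$. Any such cover has a Lebesgue number $\eta>0$, but $\eta$ is not uniformly comparable to $\epsilon$ in a general compact metric space, and this is the main obstacle.

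To get around it, use the Lebesgue number in the form: every set of $d$-diameter less than $\eta$ lies in some member of $\UU$. A $(d_n,\eta/2)$-spanning set $F$ gives the cover of $X$ by the $d_n$-balls of radius $\eta/2$ around its points. Each such ball lies in an element of $\UU^n$, coordinate by coordinate, so $N(\UU^n)\le r(X,d_n,\eta/2)$. If one only needs $\eta$ to be a fixed fraction of $\epsilon$, I would instead choose the cover as $\UU=\{B(x,\epsilon/2):x\in X\}$ reduced to a finite subcover. Its Lebesgue number is at least $\epsilon/4$, because for every $y$ the ball $B(y,\epsilon/4)$ lies inside $B(x_i,\epsilon/2)$ whenever $d(x_i,y)<\epsilon/4$. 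To guarantee that, choose the centres from an $\epsilon/4$-net, so that the cover consists of the balls $B(x_i,\epsilon/2)$ with $\{x_i\}$ an $\epsilon/4$-spanning set. Then $\diam\UU\le\epsilon$ and $\Leb(\UU)\ge\epsilon/4$.

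Consequently $N(\UU^n)\le r(X,d_n,\epsilon/8)$, and combined with the lower bound this gives
\begin{align*}
\limsup_n\tfrac1n\log s(X,d_n,\epsilon)\le\inf_{\diam\UU\le\epsilon}h_{top}(T,\UU)\le\limsup_n\tfrac1n\log r(X,d_n,\epsilon/8).
\end{align*}
Dividing by $\log\frac1\epsilon$ and using $\log\frac{8}{\epsilon}\sim\log\frac1\epsilon$ as $\epsilon\to0$, taking $\limsup_{\epsilon\to0}$ yields the first identity and taking $\liminf_{\epsilon\to0}$ yields the second. The step to check carefully is the Lebesgue-number estimate for the net-centred cover, which replaces the non-uniform Lebesgue number of a general cover.
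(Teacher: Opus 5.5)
Your proof is correct. The sandwich
\[
\limsup_{n\to\infty}\tfrac1n\log s(X,d_n,\epsilon)\le\inf_{\diam(\UU)\le\epsilon}h_{top}(T,\UU)\le\limsup_{n\to\infty}\tfrac1n\log r(X,d_n,\tfrac{\epsilon}{8})
\]
comes from the $d_n$-diameter bound on elements of $\UU^n$ on one side and from the net-centred cover with $\diam(\UU)\le\epsilon$, $\Leb(\UU)\ge\frac{\epsilon}{4}$ on the other; this is the standard argument. The paper does not prove this statement itself but cites \cite[Proposition 3.3]{yz25}, and it uses the same diameter/Lebesgue-number comparison in (\ref{equ 4.2}). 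You can delete the abandoned first attempt at the upper bound, since the net-centred cover already resolves the Lebesgue-number issue you raise there.
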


One says that \cite{yz25} $\pi:(X,d_X,T) \rightarrow (Y,d_Y,S)$ is a \emph{Lipschitz factor map} if $\pi$ is a Lipschitz and surjective  map satisfying $S\circ \pi(x)=\pi\circ T(x)$ for all $x\in X$. Additionally,  it is a bi-Lipschitz conjugate map if $\pi$ is a bijection and bi-Lipschitz map satisfying $S\circ \pi(x)=\pi\circ T(x)$ for all $x\in X$.

\begin{prop} \cite[Proposition 3.7]{yz25}
Let $\pi:(X,d_X,T) \rightarrow (Y,d_Y,S)$ be a  Lipschitz factor map. Then 
\begin{align*}
\overline{{\rm mdim}}_M(T,X,d_X) \geq& \overline{{\rm mdim}}_M(S,Y,d_Y),\\
\underline{{\rm mdim}}_M(T,X,d_X) \geq& \underline{{\rm mdim}}_M(S,Y,d_Y).
\end{align*}
Additionally, the equalities hold if $\pi$ a  bi-Lipschitz conjugate map. 
\end{prop}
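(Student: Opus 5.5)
The plan is to compare spanning sets of $\Omega\times X$ and $Y$ through the Lipschitz map $\pi$, using the Bowen metrics $(d_X)_n$ and $(d_Y)_n$ for $T$ and $S$. Lipschitz continuity lets us move an $\epsilon$-spanning set downstairs, and the definition via $\liminf$/$\limsup$ in $\epsilon$ then transfers unchanged.

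\emph{Setup.} Let $L\geq 1$ be a Lipschitz constant of $\pi$. From the conjugacy $S\circ\pi=\pi\circ T$ we get $S^j\pi=\pi T^j$ for all $j\geq 0$. Hence for $x,x'\in X$,
\begin{align*}
(d_Y)_n(\pi x,\pi x')=\max_{0\le j\le n-1} d_Y(\pi T^j x,\pi T^j x')\le L\,(d_X)_n(x,x').
\end{align*}

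\emph{Step 1: spanning sets push forward.} Let $E\subset X$ be a $((d_X)_n,\epsilon)$-spanning set of minimal cardinality. Since $\pi$ is surjective, every $y\in Y$ can be written as $y=\pi x$. Choose $e\in E$ with $(d_X)_n(x,e)\le\epsilon$. Then $(d_Y)_n(y,\pi e)\le L\epsilon$, so $\pi(E)$ is a $((d_Y)_n,L\epsilon)$-spanning set of $Y$. Therefore
\begin{align*}
r(Y,(d_Y)_n,L\epsilon)\le r(X,(d_X)_n,\epsilon).
\end{align*}

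\emph{Step 2: take limits.} Divide by $n$, take $\limsup_{n}$, and divide by $\log\frac1\epsilon$. On the left, write
\begin{align*}
\frac{1}{\log\frac1\epsilon}=\frac{\log\frac{1}{L\epsilon}}{\log\frac1\epsilon}\cdot\frac{1}{\log\frac{1}{L\epsilon}}.
\end{align*}
The first ratio tends to $1$ as $\epsilon\to0$, since $L$ is a fixed constant. The map $\epsilon\mapsto L\epsilon$ is a bijection of small scales onto small scales, so $\limsup_{\epsilon\to0}$ and $\liminf_{\epsilon\to0}$ of the $Y$-quantity at scale $L\epsilon$ equal those at scale $\epsilon$. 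This yields both inequalities. The only delicate point, and it is minor, is that any finite factor multiplying the ratio must be checked to tend to $1$, so that no boundary terms appear; this holds because $L$ is fixed.

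\emph{Bi-Lipschitz case.} If $\pi$ is a bi-Lipschitz conjugacy, then $\pi^{-1}$ is a Lipschitz factor map from $(Y,d_Y,S)$ to $(X,d_X,T)$, since $T\pi^{-1}=\pi^{-1}S$. Applying the same argument to $\pi^{-1}$ gives the reverse inequalities, hence equality.
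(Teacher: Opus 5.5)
Your proof is correct. The estimate $(d_Y)_n(\pi x,\pi x')\le L\,(d_X)_n(x,x')$ gives $r(Y,(d_Y)_n,L\epsilon)\le r(X,(d_X)_n,\epsilon)$. The factor $\log\frac{1}{L\epsilon}/\log\frac{1}{\epsilon}\to 1$, together with the reparametrization $\delta=L\epsilon$, then transfers both the $\limsup$ and the $\liminf$ in $\epsilon$. The bi-Lipschitz case follows by applying the same argument to $\pi^{-1}$, which is indeed an equivariant Lipschitz surjection.

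The paper gives no proof of its own here; it quotes \cite[Proposition 3.7]{yz25}, and your push-forward-of-spanning-sets argument is the standard one behind that result. The only blemish is the slip ``spanning sets of $\Omega\times X$'' in your opening sentence, which should read ``of $X$''.
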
 

For any a factor map $\pi:(X,T) \rightarrow (Y,S)$, Bowen  \cite[Theorem 17]{bow71} proved that
$$h_{top}(Y,S)\leq h_{top}(X,T)\leq h_{top}(Y,S) +\sup_{y\in Y}h_{top}(T,\pi^{-1}y).$$

We give an example to verify that the analogous Bowen factor formula fails for metric mean dimension. Recall that   upper and lower box dimensions  of $X$ are  respectively given by 
\begin{align*}
\overline{\rm dim}_B(X,d)=\limsup_{\epsilon \to 0}\frac{\log N_{\epsilon}(X,d)}{\logf},~
\underline{\rm dim}_B(X,d)=\liminf_{\epsilon \to 0}\frac{\log N_{\epsilon}(X,d)}{\logf},
\end{align*}
where $N_{\epsilon}(X,d)$ denotes the smallest cardinality of $d$-open balls $B_d(x,\epsilon)$ needed to cover $X$. Here, the box dimension becomes unchanged if $N_{\epsilon}(X,d)$ is replaced by $r(X,d,\epsilon)$ or $s(X,d,\epsilon)$.

Now consider the full shift  $(X^{\mathbb{Z}},d^{\mathbb{Z}},\sigma)$ over $X$, where $\sigma: X^{\mathbb{Z}} \rightarrow X^{\mathbb{Z}}$ is the left shift map given by $\sigma((x_n)_{n \in \mathbb{Z}})=(x_{n+1})_{n \in \mathbb{Z}}$,  and $$d^{\mathbb{Z}}(x,y)=\sum_{n\in \mathbb{Z}} \frac{d(x_n,y_n)}{2^{|n|}}$$ is a   product metric compatible with the product topology of $X^{\mathbb{Z}}$. The  authors in \cite[Theorem 5]{vv17} showed that
\begin{align}\label{equ 2.1}
\overline{{\rm mdim}}_M(\sigma,X^{\mathbb{Z}},d^{\mathbb{Z}}) =\overline{\rm dim}_B(X,d),
~\underline{{\rm mdim}}_M(\sigma,X^{\mathbb{Z}},d^{\mathbb{Z}}) =\underline{\rm dim}_B(X,d).
\end{align}

\begin{ex}\label{ex 2,3}
Let $A$ be $m (m\geq 2)$ symbols with the discrete topology. Fix $s\in (0,1)$.  Let $(A^{\mathbb{N}},d_s,\sigma)$ be the (one-sided) full shift over $A$, where
$$d_s(x,y)=(m^{-\frac{1}{s}})^{n(x,y)},$$
where $n(x,y)=\min \{k \in \mathbb{N}:x_k\not= y_k\}$. Then  $\overline{\rm dim}_B(A^{\mathbb{N}},d_s)=\underline{\rm dim}_B(A^{\mathbb{N}},d_s)=s$.   Hence, let $X=A^{\mathbb{N}}$ and $d_s^{\mathbb{Z}}$ as in (\ref{equ 2.1}). We  have  ${{\rm mdim}}_M(\sigma,X^{\mathbb{Z}},d_s^{\mathbb{Z}})=s$.

Choose $s_1,s_2 \in (0,1)$ with $s_1<s_2$, and  let $id: (\sigma,X^{\mathbb{Z}},d_{s_2}^{\mathbb{Z}}) \rightarrow (\sigma,X^{\mathbb{Z}},d_{s_1}^{\mathbb{Z}}) $ be the indentity map. Then $id$ is a one-Lipschitz factor map. However, it does  not hold  that 
$${{\rm mdim}}_M(\sigma,X^{\mathbb{Z}},d_{s_2}^{\mathbb{Z}})\leq {{\rm mdim}}_M(\sigma,X^{\mathbb{Z}},d_{s_1}^{\mathbb{Z}})+ \sup_{y\in Y}\overline{{\rm mdim}}_M(\sigma, id^{-1}(y),d_{s_2}^{\mathbb{Z}}).$$
\end{ex}

\begin{proof}
Fix a sufficiently small $\epsilon >0$.  Choose a positive integer $N_0$ such that $$(m^{-\frac{1}{s}})^{N_0} <\epsilon \leq (m^{-\frac{1}{s}})^{N_0-1}.$$ Then for every $x\in A^{\mathbb{N}}$,  we have $B_{d_s}(x,\epsilon)=[x|_{N_0}]:=\{ y\in A^{\mathbb{N}}: y_n=x_n ~\forall n=0,...,N_0\}$. Using
this fact, we deduce that 
$$\overline{\rm dim}_B(A^{\mathbb{N}},d_s)=\underline{\rm dim}_B(A^{\mathbb{\textsc{N}}},d_s)= \lim_{\epsilon \to 0} \frac{\log m^{N_0+1}}{\log (m^{\frac{1}{s}})^{N_0}} \frac{\log( m^{\frac{1}{s}})^{N_0}}{\logf}=s.$$
The remaining statements are due to (\ref{equ 2.1}).
\end{proof}

\section{Weighted metric mean dimension of factor maps}\label{sec 3}

In this section, we  introduce three types of weighted metric mean dimensions of factor maps and  prove Theorems  \ref{thm 1.1} and  \ref{thm 1.2}.

\subsection{Zhu's local weighted topological entropy}
We first recall the local weighted  entropy of open covers in both topological and measure-theoretic settings \cite{z24}.

Let $\pi:(X,T) \rightarrow (Y,S)$ be a factor map and $\textbf{a}=(a_1,a_2)$ with $a_1> 0$ and $a_2\geq 0$. Given $\UU \in \CC_X^o$ and $\VV\in \CC_Y^o$, the  \emph{local \textbf{a}-weighted topological entropy  of  $(\UU,\mathcal{V})$} is defined by
$$h_{top}^{\textbf{a}}(T, \UU,\mathcal{V})=a_1 h_{top}(T,\UU)+a_2h_{top}(T,\pi^{-1}(\mathcal{V})),$$
and  the \emph{\textbf{a}-weighted topological entropy  of  $X$}  is defined by
$$h_{top}^{\textbf{a}}(T)=\sup_{\mathcal{V}\in \mathcal{C}_Y^0}\sup_{\UU \in \mathcal{C}_X^o}h_{top}^{\textbf{a}}(T, \UU,\mathcal{V}).$$
Since $\pi$ is surjective and continuous, one has  $h_{top}^{\textbf{a}}(T, \UU,\mathcal{V})=a_1 h_{top}(T,\UU)+a_2h_{top}(S,\mathcal{V}),$
and hence $h_{top}^{\textbf{a}}(T)=a_1h_{top}(T,X)+a_2h_{top}(S,Y)$.

Given $\mu \in M(X,T)$, recall that the local  entropy of $\UU$ with respect to (w.r.t.) $\mu$ \cite{rom03} is given by $$h_{\mu}(T,\UU)=\inf_{\alpha \succ \UU, \alpha \in  \mathcal{P}_X}h_{\mu}(T,\alpha),$$  where the infimum ranges over  all finite Borel  partitions of $X$ that are finer than $\UU$,  and $h_{\mu}(T,\alpha)$ is the usual measure-theoretic entropy of $\mu$ w.r.t. $\alpha$.

 The \emph{local \textbf{a}-weighted  measure-theoretic $\mu$-entropy  of  $(\UU,\mathcal{V})$} is defined by
$$h_{\mu}^{\textbf{a}}(T, \UU,\mathcal{V})=a_1 h_{\mu}(T,\UU)+a_2h_{\mu}(T,\pi^{-1}(\mathcal{V})),$$
and the \emph{\textbf{a}-weighted measure-theoretic entropy  of  $\mu$}  is defined by
$$h_{\mu}^{\textbf{a}}(T)=\sup_{\mathcal{V}\in \mathcal{C}_Y^0}\sup_{\UU \in \mathcal{C}_X^o}h_{\mu}^{\textbf{a}}(T, \UU,\mathcal{V}).$$
Using  \cite[Proposition 6]{rom03}, one has $h_{\mu}(T,\pi^{-1}(\mathcal{V}))=h_{\pi_{*} \mu}(S,\mathcal{V})$   for all $\mu \in M(X,T)$. Hence,  we obtain  that $h_{\mu}^{\textbf{a}}(T, \UU,\mathcal{V})=a_1 h_{\mu}(T,\UU)+a_2h_{\pi_{*}\mu}(S,\mathcal{V}),$  and $h_{\mu}^{\textbf{a}}(T)=a_1h_{\mu}(T)+a_2h_{\pi_{*}\mu}(S)$  since $h_{\mu}(T)=\sup_{\UU\in \CC_X^0}h_{\mu}(T,\UU)$ (cf. \cite[Theorem 3.5]{hyz11}). 

The local \textbf{a}-weighted topological entropy  and the  local \textbf{a}-weighted measure-theoretic $\mu$-entropy   are related by the following local variational principle \cite[Theorem 2.9]{z24}.

\begin{lem}\label{lem 4.1}
Let $\pi:(X,T) \rightarrow (Y,S)$ be a factor map and \rm {\textbf{a}}$=(a_1,a_2)$  with $a_1> 0$ and $a_2\geq 0$.  Let $\UU \in \CC_X^o$ and $\VV\in \CC_Y^o$. Then
\begin{align*}
h_{top}^{\textbf{a}}(T, \UU,\mathcal{V})=\sup_{\mu \in M(X,T)} h_{\mu}^{\textbf{a}}(T, \UU,\mathcal{V})
=\sup_{\mu \in E(X,T)} h_{\mu}^{\textbf{a}}(T, \UU,\mathcal{V}).
\end{align*}
\end{lem}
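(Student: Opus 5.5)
Since this is Zhu's local variational principle \cite[Theorem 2.9]{z24}, the plan is to recover it from the two classical ingredients of local entropy theory, run in the weighted setting. First I record the reductions made just above: $h_{\mu}(T,\pi^{-1}\VV)=h_{\pi_*\mu}(S,\VV)$ by \cite[Proposition 6]{rom03}, and $h_{top}(T,\pi^{-1}\VV)=h_{top}(S,\VV)$ because $\pi$ is surjective. So both sides split into an $\UU$-part on $X$ and a $\VV$-part on $Y$.

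The inequality ``$\geq$'' (topological quantity dominates every measure quantity) is the easy half. For every $\mu\in M(X,T)$ one has $h_{\mu}(T,\UU)\le h_{top}(T,\UU)$ and $h_{\pi_*\mu}(S,\VV)\le h_{top}(S,\VV)$; this is the easy direction of the Blanchard--Glasner--Host / Huang--Ye local variational principle. Multiplying by $a_1>0$ and $a_2\ge 0$ and adding gives $h^{\textbf{a}}_{\mu}(T,\UU,\VV)\le h^{\textbf{a}}_{top}(T,\UU,\VV)$. The restriction to ergodic measures then comes from the ergodic decomposition. In Huang--Ye's theory $\mu\mapsto h_\mu(T,\UU)$ is affine and upper semicontinuous and satisfies an ergodic decomposition formula, and the same holds for $\mu\mapsto h_{\pi_*\mu}(S,\VV)$ since $\mu\mapsto\pi_*\mu$ is affine and continuous. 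Hence $h^{\textbf{a}}_\mu$ is the $\tau$-average of $h^{\textbf{a}}_{\mu_\omega}$, so the supremum over $M(X,T)$ equals the supremum over $E(X,T)$.

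The hard half is ``$\leq$''. Here we need a \emph{single} invariant measure that is nearly optimal for $\UU$ and for $\VV$ at the same time. Merely combining a $\UU$-maximizer $\mu_1$ with a lift $\mu_2$ of a $\VV$-maximizer fails: affinity makes $t\mu_1+(1-t)\mu_2$ only achieve a convex combination of the two values. So I would work with Zhu's weighted covering count in place of the product of the two separate counts. For $n\in\mathbb N$ this is the minimum, over subfamilies $\mathcal{A}\subset\VV^n$ covering $Y$, of
\[
\sum_{V\in\mathcal{A}}N(\UU^n,\pi^{-1}V)^{a_1}\cdot(\text{weight})^{a_2},
\]
whose exponential growth rate is the topological side in Zhu's formulation.

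Then I would run the Huang--Ye combinatorial argument. The first step is to pick finite Borel partitions $\alpha\succ\UU$ and $\beta\succ\VV$ realizing near-infimal local entropies. Next, for each $n$, I would choose a point set adapted to the weighted count and form the empirical measures $\sigma_n=\frac1n\sum_{j<n}T^j_*\nu_n$, where $\nu_n$ is a weighted atomic measure. The final step is to pass to a weak$^*$ limit $\mu$. Upper semicontinuity of the local entropies lets the Misiurewicz estimate transfer the weighted count to $a_1h_\mu(T,\UU)+a_2h_{\pi_*\mu}(S,\VV)$.

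I expect this last step to be the main obstacle. The weights make $\nu_n$ non-uniform, so the usual $\log$-cardinality bound $H_{\nu_n}(\alpha^n)$ has to be replaced by a weighted entropy inequality of the form $a_1H(\alpha^n|\pi^{-1}\beta^n)+(a_1+a_2)H(\pi^{-1}\beta^n)$. One must also control the fact that $\alpha$ is only finer than $\UU$, which forces the Huang--Ye ``universal partition'' trick applied to $\UU^M$ for large $M$. I would first try to carry this out fibrewise: handle the conditional term through the relative local variational principle over $\pi$, and then optimize over the base using a Gibbs-type weighting on $Y$.
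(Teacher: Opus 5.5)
There is a genuine gap in your ``$\le$'' half, and it cannot be filled for the statement as written. You correctly read the paper's definition as $h^{\mathbf a}_{top}(T,\UU,\VV)=a_1h_{top}(T,\UU)+a_2h_{top}(S,\VV)$. Against this sum your easy half is fine, and so is your ergodic-decomposition argument for the second equality, which is exactly what the paper does. For the hard half, however, you replace the sum by the growth rate of a weighted fibre count, and you never show that this rate is at least $a_1h_{top}(T,\UU)+a_2h_{top}(S,\VV)$. In general it is not. Your own remark that $t\mu_1+(1-t)\mu_2$ only yields a convex combination is not a defect of one method; it is how counterexamples arise. Take $\pi=\mathrm{id}$ and $X=Y=X_1\sqcup X_2$, two copies of the two-sided full shift on $\{0,1\}$. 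Put $[i]_k=\{x\in X_k: x_0=i\}$, $\UU=\{[0]_1,[1]_1,X_2\}$ and $\VV=\{X_1,[0]_2,[1]_2\}$. Then $N(\UU^n)=N(\VV^n)=2^n+1$, so the left-hand side equals $(a_1+a_2)\log 2$. On the other hand, every $\mu\in M(X,T)$ has the form $t\mu_1+(1-t)\mu_2$ with $\mu_k(X_k)=1$. Using $\UU$ and $\VV$ themselves as partitions gives $h_\mu(T,\UU)\le t\log 2$ and $h_\mu(T,\VV)\le (1-t)\log 2$. Hence $\sup_{\mu}h^{\mathbf a}_\mu(T,\UU,\VV)\le \max\{a_1,a_2\}\log 2<(a_1+a_2)\log 2$ whenever $a_2>0$. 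So with the paper's definition the first equality is false.

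The paper's proof is only a citation of Zhu's theorem plus the Huang--Ye--Zhang ergodic decomposition, so it has the same defect. Whatever the exact form of Zhu's local weighted topological entropy, the example shows it cannot be this sum. The sum is also what produces $h^{\mathbf a}_{top}(T)=a_1h_{top}(T,X)+a_2h_{top}(S,Y)$, which in general differs from the right-hand side of \eqref{equ 1.2}. To get a true statement along your lines, you must drop the splitting and take the count itself as the topological side. You must then prove the easy half for the count, for instance as in the proof of Theorem~\ref{thm 4.8}(2) via Lemma~\ref{lem 3.9}(2). Even then, note that your identity $a_1H(\alpha^n|\pi^{-1}\beta^n)+(a_1+a_2)H(\pi^{-1}\beta^n)=a_1H(\alpha^n\vee\pi^{-1}\beta^n)+a_2H(\pi^{-1}\beta^n)$ measures $\UU\vee\pi^{-1}\VV$, not $\UU$. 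Consider the matching count $(a_1+a_2)\lim_n\frac{1}{n}\log\min_{\mathcal A}\sum_{V\in\mathcal A}N(\UU^n,\pi^{-1}V)^{a_1/(a_1+a_2)}$, where $\mathcal A\subset\VV^n$ ranges over subcovers of $Y$. For $\UU=\{X\}$ it equals $(a_1+a_2)h_{top}(S,\VV)$, whereas $\sup_\mu h^{\mathbf a}_\mu(T,\{X\},\VV)=a_2h_{top}(S,\VV)$; these differ as soon as $h_{top}(S,\VV)>0$. So at the level of fixed covers, the $a_1$-term must involve $\UU\vee\pi^{-1}\VV$, or one must assume $\UU\succ\pi^{-1}\VV$. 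Finally, the weight in your count is never fixed. The construction of $\nu_n$ and the weighted Misiurewicz estimate, which together are the whole content of Zhu's theorem, are only announced, not carried out.
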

\begin{proof}
The invariant case is due to Zhu \cite{z24}.   For the ergodic case, it   can be proved  using  the ergodic decomposition theorem of $h_{\mu}(T,\UU)$ \cite[Theorem 3.13]{hyz11}.
\end{proof}


Now we define  weighted metric mean dimension of $X$ with respect to a factor map.

Let $\pi:(X,d_X,T) \rightarrow (Y,d_Y,S)$ be a factor map and $\textbf{a}=(a_1,a_2)$ with $a_1> 0$ and $a_2\geq 0$. Let 
$$h_{top}^{\textbf{a}}(T,X,\{d_X,d_Y\},\epsilon):= a_1\inf_{\diam (\UU,d_X)\leq \epsilon}h_{top}(T,\UU) +a_2 \inf_{\diam (\VV,d_Y)\leq \epsilon}h_{top}(S,\VV)$$
denote  the \textbf{a}-weighted $\epsilon$-topological entropy of $X$.

\begin{df}\label{df 5.3}
The  \textbf{a}-weighted  upper and lower metric mean dimensions  of $X$ are respectively defined by
\begin{align*}
\overline{\rm mdim}_M^{\textbf{a}}(T,X,\{d_X,d_Y\})&=\limsup_{\epsilon \to 0}\frac{1}{\logf}h_{top}^{\textbf{a}}(T,X,\{d_X,d_Y\},\epsilon),\\ 
\underline{\rm mdim}_M^{\textbf{a}}(T,X,\{d_X,d_Y\})&=\liminf_{\epsilon \to 0}\frac{1}{\logf}h_{top}^{\textbf{a}}(T,X,\{d_X,d_Y\},\epsilon).
\end{align*}
\end{df}

\begin{rem}
\begin{itemize}
\item [(1)] The weighted metric mean dimension of $X$ not only depends on the  compatible metrics on $X$ and $Y$, but also depends on the  weight $\textbf{a}=(a_1,a_2)$. 
\item [(2)]  The weight $(1,0)$ reclaims the metric mean dimension of $X$ given  by Lindenstrauss and Weiss \cite{lw00}. Therefore, the weighted metric mean dimension is an extension of the metric mean dimension of a single dynamical system to a factor map between two dynamical systems.
\end{itemize}
\end{rem}

Next we recall the definition of measure-theoretic metric mean dimension of invariant measures \cite[Definition 3.24]{yz25}.

\begin{df}
$(a)$ Let $\mathrm{co}(E(X,T))$ be  the convex hull  of $E(X,T)$. For $\mu \in  \mathrm{co}(E(X,G))$, we can rewrite $\mu=\sum_{j=1}^k\lambda_j \mu_j$, where $ \sum_{j=1}^{k}\lambda_j=1, \mu_j \in E(X,T), ~0\leq \lambda_j\leq 1,~j=1,...,k$. The  measure-theoretic $\epsilon$-entropy of $\mu$ is defined as
\begin{align*}
F(\mu, \epsilon):=\sum_{j=1}^{k}\lambda_j \inf_{\diam (\UU,d)\leq \epsilon } h_{\mu}(T,\UU).
\end{align*}

$(b)$ For $\mu \in  M(X,G)$, let $M_{T}(\mu)$  denote the collection of all families $\{\mu_{\epsilon}\}_{\epsilon >0} \subset \mathrm{co}(E(X,T))$  that converge to $\mu$  as $\epsilon \to 0$ in the weak$^{*}$-topology. We define  the  measure-theoretic upper and lower  metric mean dimensions of $\mu$ as
\begin{align*}
\begin{split}
\overline{\rm {mdim}}_{\mu}(T,d)&=\sup_{(\mu_{\epsilon})_{\epsilon}\in M_{T}(\mu) }\{\limsup_{\epsilon \to 0}\frac{F(\mu_{\epsilon}, \epsilon)}{\log \frac{1}{\epsilon}}\},\\
\underline{\rm {mdim}}_{\mu}(T,d)&=\sup_{(\mu_{\epsilon})_{\epsilon}\in M_{T}(\mu) }\{\liminf_{\epsilon \to 0}\frac{F(\mu_{\epsilon}, \epsilon)}{\log \frac{1}{\epsilon}}\},
\end{split}
\end{align*}
respectively.
\end{df}

\begin{thm}\cite[Theorem 1.1]{yz25}\label{thm 3.6}
Let $(X, d, T)$ be a TDS.  Then 
\begin{align*}
{\overline{\rm  mdim}}_M(T,X,d)&=\max_{\mu \in M(X,T)}\overline{\rm {mdim}}_{\mu}(T,d).
\end{align*}
\end{thm}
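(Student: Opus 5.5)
The proof has two inequalities, both reduced to the open-cover formula of Proposition \ref{prop 2.1} and the local variational principle of Lemma \ref{lem 4.1} in the single-system case ($\textbf{a}=(1,0)$, or just the classical local variational principle).

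\emph{Upper bound.} Fix $\mu\in M(X,T)$ and a family $(\mu_\epsilon)_\epsilon\in M_T(\mu)$, and write $\mu_\epsilon=\sum_j\lambda_j\mu_j$ with the $\mu_j$ ergodic. For any $\UU_0\in\CC_X^o$ with $\diam(\UU_0)\leq\epsilon$, Lemma \ref{lem 4.1} gives
\[
\inf_{\diam(\UU)\leq\epsilon}h_{\mu_j}(T,\UU)\leq h_{\mu_j}(T,\UU_0)\leq h_{top}(T,\UU_0).
\]
Hence $F(\mu_\epsilon,\epsilon)\leq h_{top}(T,\UU_0)$. Taking the infimum over $\UU_0$, dividing by $\log\frac1\epsilon$, taking $\limsup$ and then the supremum over families yields $\overline{\rm mdim}_\mu(T,d)\leq\overline{\rm mdim}_M(T,X,d)$ by Proposition \ref{prop 2.1}.

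\emph{Lower bound and attainment.} The difficulty is that $F$ takes an infimum over covers inside each measure, whereas the topological quantity takes an infimum over covers of a supremum over measures. I would resolve this min--max issue with a Lebesgue-number trick. For each $\epsilon$, choose a finite cover $\VV_\epsilon$ by open balls of radius $2\epsilon$ centred at an $\epsilon$-net. Then $\diam(\VV_\epsilon)\leq4\epsilon$ and $\Leb(\VV_\epsilon)\geq\epsilon$. Every $\UU$ with $\diam(\UU)\leq\epsilon/2$ refines $\VV_\epsilon$, since each element of $\UU$ lies in a ball of radius $\epsilon$, which lies in an element of $\VV_\epsilon$. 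Therefore
\[
\inf_{\diam(\UU)\leq\epsilon/2}h_\nu(T,\UU)\geq h_\nu(T,\VV_\epsilon)\quad\text{for every }\nu .
\]
By the local variational principle for a single cover, in which the supremum is attained at an ergodic measure, pick $\nu_\epsilon\in E(X,T)$ with
\[
h_{\nu_\epsilon}(T,\VV_\epsilon)=h_{top}(T,\VV_\epsilon)\geq\inf_{\diam(\UU)\leq4\epsilon}h_{top}(T,\UU).
\]
Consequently $F(\nu_\epsilon,\epsilon/2)\geq\inf_{\diam(\UU)\leq4\epsilon}h_{top}(T,\UU)$. Since $\log\frac{1}{4\epsilon}/\log\frac{2}{\epsilon}\to1$, along a sequence $\epsilon_k\to0$ realizing the $\limsup$ in Proposition \ref{prop 2.1} we get $\limsup_k F(\nu_{\epsilon_k},\epsilon_k/2)/\log\frac{2}{\epsilon_k}\geq\overline{\rm mdim}_M(T,X,d)$.

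\emph{Building the family.} By weak$^*$ compactness, pass to a subsequence so that $\nu_{\epsilon_k}\to\mu$ for some $\mu\in M(X,T)$. To get a genuine family in $M_T(\mu)$, set $\mu_{\epsilon_k/2}=\nu_{\epsilon_k}$. For the remaining $\epsilon$, use elements of $\mathrm{co}(E(X,T))$ converging to $\mu$; these exist because $\mathrm{co}(E(X,T))$ is weak$^*$ dense in $M(X,T)$ by the ergodic decomposition. Then $\overline{\rm mdim}_\mu(T,d)\geq\overline{\rm mdim}_M(T,X,d)$. Combined with the upper bound, this shows the supremum is attained at $\mu$, so it is a maximum.

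The step I expect to need the most care is this lower-bound construction. It requires checking that the single-cover local variational principle attains its supremum at an ergodic measure (Huang--Ye / Romagnoli upper semicontinuity), and that the refinement argument handles open versus closed balls correctly; the factor-$2$ radii above are chosen to absorb that issue.
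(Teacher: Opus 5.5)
Your proof is correct. The upper bound is the easy half $h_{\nu}(T,\UU)\le h_{top}(T,\UU)$ of the local variational principle. Your lower bound runs as follows: covers of diameter $\le\epsilon/2$ refine a fixed cover $\VV_\epsilon$ of diameter $\le 4\epsilon$; you take an ergodic (near-)maximizer for $h_{top}(T,\VV_\epsilon)$, pass to a weak$^*$ limit along a subsequence, and use the rescaling $\log\frac{1}{4\epsilon}/\log\frac{2}{\epsilon}\to 1$. This is the same scheme the paper uses for the weighted case in its proof of Theorem \ref{thm 1.1}; the paper itself only cites this statement. The only cosmetic difference is that you fill in the family using density of $\mathrm{co}(E(X,T))$, whereas the paper extends the sequence $(\nu_{\epsilon_k})$ piecewise-constantly, which avoids needing that fact.
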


We are ready to prove Theorem \ref{thm 1.1}.

\begin{proof}[Proof of Theorem \ref{thm 1.1}]
It is easy to see that for every $\epsilon >0$ and $\mu \in  M(X,T)$,
\begin{align}\label{equ 4.2}
\begin{split}
\inf_{\diam (\UU,d_X)\leq \epsilon}h_{top}(T,\UU)&\leq  \sup_{\diam (\UU,d_X)\leq \epsilon, \atop  \Leb (\UU,d_X) \geq \frac{\epsilon}{4}}h_{top}(T,\UU)\leq\inf_{\diam (\UU,d_X)\leq \frac{\epsilon}{8}}h_{top}(T,\UU),\\
\inf_{\diam (\UU,d_X)\leq \epsilon}h_{\mu}(T,\UU)&\leq  \sup_{\diam (\UU,d_X)\leq \epsilon, \atop  \Leb (\UU,d_X) \geq \frac{\epsilon}{4}}h_{\mu}(T,\UU)\leq\inf_{\diam (\UU,d_X)\leq \frac{\epsilon}{8}}h_{\mu}(T,\UU).
\end{split}
\end{align}
Therefore, by Lemma \ref{lem 4.1}  we have
\begin{align}\label{equ 4.3}
&  \sup_{\mu \in E(X,T)}\{a_1\sup_{\diam (\UU,d_X)\leq \epsilon, \atop  \Leb (\UU,d_X) \geq \frac{\epsilon}{4}}h_{\mu}(T,\UU)+a_2 \sup_{\diam (\VV,d_Y)\leq \epsilon, \atop  \Leb (\VV,d_Y) \geq \frac{\epsilon}{4}}h_{\pi_{*}\mu} (S,\VV)\}\nonumber\\
=&  \sup_{\mu \in E(X,T)}\{\sup_{\diam (\UU,d_X)\leq \epsilon, \atop  \Leb (\UU,d_X) \geq \frac{\epsilon}{4}} \sup_{\diam (\VV,d_Y)\leq \epsilon, \atop  \Leb (\VV,d_Y) \geq \frac{\epsilon}{4}}(a_1h_{\mu}(T,\UU)+a_2 h_{\pi_{*}\mu} (S,\VV))\} \nonumber \\
=&\sup_{\diam (\VV,d_Y)\leq \epsilon, \atop  \Leb (\VV,d_Y) \geq \frac{\epsilon}{4}}\sup_{\diam (\UU,d_X)\leq \epsilon, \atop  \Leb (\UU,d_X) \geq \frac{\epsilon}{4}}h_{top}^{\textbf{a}}(T, \UU,\mathcal{V})\nonumber
\end{align}
Combing this inequality with  (\ref{equ 4.2}), for every $\epsilon >0$ we have
\begin{align}
h_{top}^{\textbf{a}}(T,X,\{d_X,d_Y\},\epsilon)
\leq \sup_{\mu \in E(X,T)}\{\inf_{\diam (\UU,d_X)\leq \frac{\epsilon}{8}} \inf_{\diam (\VV,d_Y)\leq \frac{\epsilon}{8}}(a_1h_{\mu}(T,\UU)+a_2 h_{\pi_{*}\mu} (S,\VV))\},
\end{align}
and hence there exists $\mu_{\epsilon} \in E(X,T)$ such that
$$h_{top}^{\textbf{a}}(T,X,\{d_X,d_Y\},\epsilon)-\epsilon
<\inf_{\diam (\UU,d_X)\leq \frac{\epsilon}{8}} \inf_{\diam (\VV,d_Y)\leq \frac{\epsilon}{8}}(a_1h_{\mu_\epsilon}(T,\UU)+a_2 h_{\pi_{*}\mu_\epsilon} (S,\VV))$$

Let $\{\epsilon_n\}$ be a  strictly decreasing sequence that converges to $0$ such that 
 $$\overline{\rm mdim}_M^{\textbf{a}}(T,X,\{d_X,d_Y\})=\lim_{n \to \infty}\frac{h_{top}^{\textbf{a}}(T,X,\{d_X,d_Y\},\epsilon_n)}{\log \frac{1}{\epsilon_n}}$$
 and for some $\mu\in M(X,T)$,  $\mu_{\epsilon_n} \to \mu$ as $n \to \infty$. Then $\pi_{*}\mu_{\epsilon_n} \to \pi_{*}\mu\in M(Y,S)$ as $n \to \infty$ since $\pi_{*}$ is continuous. For every $0<\epsilon\leq \epsilon_1$, we define $\lambda_{\epsilon}=\mu_{\epsilon_n}$  and $\nu_\epsilon =\pi_{*}\mu_{\epsilon_n}$ if $\epsilon\in (\epsilon_{n+1},\epsilon_n]$; if $\epsilon >\epsilon_1$, we  define $\lambda_{\epsilon}=\mu$  and $\nu_\epsilon =\pi_{*}\mu$. Then $(\lambda_\epsilon)_{\epsilon} \in M_{T}(\mu)$ and $(\nu_\epsilon)_{\epsilon} \in M_{S}(\pi_{*}\mu)$. Hence, we have
\begin{align}\label{inequ 3.4}
&\overline{\rm mdim}_M^{\textbf{a}}(T,X,\{d_X,d_Y\})\\
\leq &\limsup_{n \to \infty}\frac{1}{\log \frac{1}{\epsilon_n}}\{a_1\inf_{\diam (\UU,d_X)\leq \frac{\epsilon_n}{8}}h_{\mu_{\epsilon_n}}(T,\UU) +a_2 \inf_{\diam (\VV,d_Y)\leq \frac{\epsilon_n}{8}}h_{\pi_{*}\mu_{\epsilon_n}}(S,\VV)\} \nonumber \\
\leq & a_1 \limsup_{n \to \infty}\frac{1}{\log \frac{1}{\epsilon_n}} \inf_{\diam (\UU,d_X)\leq \frac{\epsilon_n}{8}}h_{\mu_{\epsilon_n}}(T,\UU)+ 
a_2 \limsup_{n \to \infty}\frac{1}{\log \frac{1}{\epsilon_n}} \inf_{\diam (\VV,d_Y)\leq \frac{\epsilon_n}{8}}h_{\pi_{*}\mu_{\epsilon_n}}(S,\VV) \nonumber\\
\leq & a_1 \overline{\rm {mdim}}_{\mu}(T,d_X) +a_2 \overline{\rm {mdim}}_{\pi_{*}\mu}(S,d_Y) \nonumber,
\end{align}
where we used the fact that for any $l>0$,  $$\overline{\rm {mdim}}_{\mu}(T,d)=\sup_{(\mu_{\epsilon})_{\epsilon}\in M_{T}(\mu) }\{\limsup_{\epsilon \to 0}\frac{F(\mu_{\epsilon}, l\epsilon)}{\log \frac{1}{\epsilon}}\}$$ for the last inequality.

 On the other hand,  by  Theorem \ref{thm 3.6} we have
\begin{align}\label{inequ 3.5}
&\sup_{\mu \in M(X,T)}\{a_1 \overline{\rm {mdim}}_{\mu}(T,d_X) +a_2 \overline{\rm {mdim}}_{\pi_{*}\mu}(S,d_Y)\} \nonumber\\
 \leq&  a_1\overline{\rm mdim}_M(T,X,d_X)+a_2{\rm mdim}_M(S,Y,d_Y)\nonumber \\
=&\overline{\rm mdim}_M^{\textbf{a}}(T,X,\{d_X,d_Y\}),
\end{align}
where we used Proposition \ref{prop 2.1} and $\overline{\rm mdim}_M(S,Y,d_Y)=\underline{\rm mdim}_M(S,Y,d_Y)$ for the last equality.

By (\ref{inequ 3.4}) and (\ref{inequ 3.5}), we complete the proof.
\end{proof}

\begin{rem}
In Theorem \ref{thm 1.1}, it is unclear whether  the corresponding  variational principle holds for the  weighted lower metric mean dimension of $X$.
\end{rem}

\subsection{Tsukamoto's approach to weighted topological entropy}

Let  $\pi:(X,d_X,T)\rightarrow (Y,d_Y,S)$ be a factor map. Fix $\epsilon>0$, $n \in \mathbb{N}$, $\omega \in [0,1]$, and let $\Omega$ be a non-empty subset of $X$. We  define $\#(\Omega,n,\epsilon)$ as the  smallest positive integer $m$ such that  $\Omega$ is  covered by  a family of open subsets $\{U_1,\cdots, U_m\}$  of $X$ with  $\diam(U_j,(d_X)_{n})<\epsilon$ for all $1\leq j \leq m$. We put
\begin{align*}
\#^{\omega}(\pi,n,\epsilon)
=\inf\left\{\sum_{i=1}^k
(\#(\pi^{-1}(V_i),n,\epsilon))^{\omega}\right\},
\end{align*} 
where the infimum ranges over all open covers $\{V_1,,,.,V_k\}$ of $Y$ with  $\diam(V_i,(d_{Y})_{n})<\epsilon$ for all $1\leq i \leq k$.

The \emph{weighted topological entropy of $X$} is defined by 
$$h_{top}^{\omega}(X,T|\pi)=\lim_{\epsilon \to 0}\lim_{n \to \infty}\frac{1}{n}\log  \#^{\omega}(\pi,n,\epsilon),$$
where the $\lim_{n \to \infty}$ exists since  $\{\log  \#^{\omega}(\pi,n,\epsilon)\}$ is  a sub-additive sequence in $n$.

\begin{df}\label{df 5.6}
We define the $\omega$-weighted upper and lower metric mean dimensions  of $X$ with respect to $\pi$ as 
\begin{align*}
\overline{\rm mdim}_M^{\omega}(X,T|\pi,\{d_X,d_Y\})&=\limsup_{\epsilon \to 0}\frac{1}{\logf}\lim_{n \to \infty}\frac{1}{n}\log  \#^{\omega}(\pi,n,\epsilon),\\
\underline{\rm mdim}_M^{\omega}(X,T|\pi,\{d_X,d_Y\})&=\liminf_{\epsilon \to 0}\frac{1}{\logf}\lim_{n \to \infty}\frac{1}{n}\log  \#^{\omega}(\pi,n,\epsilon),
\end{align*}
respectively.
\end{df}

The following two examples are helpful  for understanding the $\omega$-weighted  metric mean dimensions of  factor maps.

\begin{ex}\label{ex 4.7}
$(1)$  Given  two TDSs $(X,d_X,T)$ and $(Y,d_Y,S)$, let $$\pi_X: (X\times Y, d_X\times d_Y,T\times S)\rightarrow (X,d_X,T)$$ be  the coordinate projection from $X\times Y$ to $X$, where $$d_X\times d_Y\big((x_1,y_1),(x_2,y_2)\big)=\max\{d_X(x_1,x_2), d_Y(y_1,y_2)\}.$$ Then $\pi_X: X\times Y \rightarrow Y$ is a one-factor map.   Assume that $\overline{\rm mdim}_M(T,X,d_X)=\underline{\rm mdim}_M(T,X,d_X)$. Then for every $\omega \in [0,1]$,
\begin{align}\label{equ 4.4}
\begin{split}
\overline{\rm mdim}_M^{\omega}(X\times Y,T\times S|\pi_X,\{d_X\times d_Y,d_Y\})
= {\rm mdim}_M(T,X,d_X) + \omega \cdot \overline{\rm mdim}_M(S,Y,d_Y). 
\end{split}
\end{align}

$(2)$ Let $(X,d,T)$ be a TDS. Put
$$\widetilde{X}:=\{(x_n)_{n\in \mathbb{Z}} \in X^{\mathbb{Z}}: x_{n+1}=T(x_n),x_n \in X, \forall n\in \mathbb{Z}\}.$$
Then $\widetilde{X}$ is a closed subset of $X^{\mathbb{
Z}}$, and  is metrizable by the metric $$\widetilde{d}(x,y )=\sum_{n\in \mathbb{Z}}\frac{d(x_n,y_n)}{2^{|n|}},$$
where $x=(x_n)_{n\in \mathbb{Z}},y= (y_n)_{n\in \mathbb{Z}} \in \widetilde{X}$.
Let $\widetilde{T}$ be the left shift on $\widetilde{X}$. Then the projection $$\pi:(x_n)_{n\in \mathbb{Z}} \in \widetilde{X} \mapsto x_0 \in X$$ to its $0$-th coordinate is a factor map between $(\widetilde{X},\widetilde{d},\widetilde{T})$ and $(X,d,T)$. This is also known as  the {natural extension} of $(X,T)$. Then  for every $\omega \in [0,1]$,
\begin{align}\label{equ 4.5}
\overline{\rm mdim}_M^{\omega}(\widetilde{X},\widetilde{T}|\pi,\{\widetilde{d},d\})= \overline{\rm mdim}_{M}(T,X,d).
\end{align}
\end{ex}

\begin{proof}
(1). Notice that the upper metric mean dimension of $Y$ is  equivalently given by
$$\overline{\rm mdim}_M(S,Y,d_Y)=\limsup_{\epsilon \to 0}\frac{1}{\logf}\lim_{n \to \infty}\frac{\log \#(Y,n,\epsilon)}{n}.$$

Fix $\epsilon >0$ and $n\in \mathbb{N}$. Observe that  $\pi_X^{-1}(A)= A\times Y$ for any $A \subset X$ and  $(d_X\times d_Y)_n=\max\{(d_X)_n,(d_Y)_n\}$. Now cover $X$  by the open sets $\{V_j\}_{j=1}^m$ of $X$ with $\diam(V_j, (d_X)_n)<\epsilon$ for every $1\leq j \leq m$. Then 
$\#(\pi_X^{-1}(V_j),n,\epsilon)= \#(Y,n,\epsilon)$ for every $1\leq j\leq m$. So $\#^{\omega}(\pi,n,\epsilon)= \#(X,n,\epsilon)(\#(Y,n,\epsilon))^{\omega}$. This yields that
\begin{align*}
\overline{\rm mdim}_M^{\omega}(X\times Y,T\times S|\pi_X,\{d_X\times d_Y,d_Y\})
= {\rm mdim}_M(T,X,d_X) + \omega \cdot \overline{\rm mdim}_M(S,Y,d_Y).
\end{align*}

(2).  Fix $\epsilon >0$  and choose $N_0$ sufficiently large such that $\sum_{n\geq N_0}\frac{\diam(X,d)}{2^{|n|}}<\frac{\epsilon}{4}.$  Let $E$ be  a $(n,\frac{\epsilon}{8})$-spanning set of $X$. Then  $X=\cup_{x\in E}B_n(x,\frac{\epsilon}{8})$.  
Assume that $\{B_d(y_j,\frac{\epsilon}{8})\}_{j=1}^M$ is a finite open cover of $X$  consisting of $d$-open balls, where $M$ is a positive integer depending on $\epsilon$.  We set
$$\widetilde{U}:=\{x\in \widetilde{X}: x_{j}\in B(y_{i_j},\frac{\epsilon}{8})~ \text{for}~j=-N_0+1,...,-1, n, n+1,...,n+N_0-1\}.$$
Varying the indexes $i_j$, the  total number of such open sets  is at most $M^{2N_0}$. Fix $x\in E$. If $\widetilde{U}\cap \pi^{-1}B_n(x,\frac{\epsilon}{8})\not=\emptyset$, 
then  $\widetilde{d}_n(y_1,y_2)< \frac{3}{4}\epsilon$ for any two  distinct $y_1,y_2 \in \widetilde{U}\cap \pi^{-1}B_n(x,\frac{\epsilon}{8})$. 

Indeed,  by definition we have $d((y_1)_j,(y_2)_j)<\frac{\epsilon}{4}$ for  $j=-N_0+1,...,n+N_0-1$. Then for any $0\leq k\leq n-1$,
$$\widetilde{d}(\widetilde{T}^k(y_1),\widetilde{T}^k(y_2))< \sum_{-N_0<j<N_0}\frac{d((\widetilde{T}^k(y_1))_j,(\widetilde{T}^k(y_2))_j)}{2^{|j|}} +  \frac{\epsilon}{4}<\frac{3}{4}\epsilon.$$
This implies that $\diam (\widetilde{U}\cap \pi^{-1}B_n(x,\frac{\epsilon}{4}),\widetilde{d}_n)<\epsilon$. Thus, we have 
$$\#^{\omega}(\pi,n,\epsilon)\leq  r(X,d_n,\frac{\epsilon}{8})\cdot M^{2 N_0 \omega}$$ for all $\omega \in [0,1]$. Taking the corresponding limits, we get
$$\overline{\rm mdim}_M^{\omega}(\widetilde{X},\widetilde{T}|\pi,\{\widetilde{d},d\})\leq  \overline{\rm mdim}_{M}(T,X,d).$$

On the other hand, it is clear that $\#^{\omega}(\pi,n,\epsilon)\geq  \#(X,n,\epsilon)$  for every $\omega\in [0,1]$. This shows the reverse inequality  $$\overline{\rm mdim}_M^{\omega}(\widetilde{X},\widetilde{T}|\pi,\{\widetilde{d},d\})\geq \overline{\rm mdim}_{M}(T,X,d).$$
\end{proof}

We collect several   useful  facts to  compare  $\omega$-weighted  metric mean dimension with $(\boldsymbol{\omega, 1-\omega})$-weighted metric mean dimension.

\begin{lem}\label{lem 3.9}
The following statements hold:

$(1)$ Let $(X,T)$ be a TDS. Then for any $\mu \in M(X,T)$, one has 
$$h_{\mu}(T,\UU)=\lim_{n \to \infty}\frac{1}{n}\inf_{\alpha \succ \UU^n}H_{\mu}(\alpha),$$
where  the infimum ranges over all finite Borel  partitions $\alpha$ of $X$ that  are finer than $\UU^n$.

$(2)$ Let $p_1,\cdots,p_n$ be non-negative numbers with $\sum_{i=1}^n p_i=1$. Then for any real numbers $x_1,\cdots,x_n$, 
$$\sum_{i=1}^n(-p_i\log p_i+p_ix_i)\le\log\sum_{i=1}^n e^{x_i}.$$

$(3)$  Let $\pi:(X,d_X,T) \rightarrow (Y,d_Y,S)$ be a factor map and $\textbf{a}=(a_1,a_2)$ with $a_1> 0$ and $a_2\geq 0$.
\begin{align*}
&\overline{\rm mdim}_M^{\textbf{a}}(T,X,\{d_X,d_Y\})\\
=&\limsup_{\epsilon \to 0}\frac{1}{\logf}\sup_{\mu \in E(X,T)}\{a_1 \inf_{\diam(\UU,d_X)\leq \epsilon}h_{\mu}(T,\UU)+a_2 \inf_{\diam(\VV,d_Y)\leq \epsilon}h_{\pi_{*}\mu}(S,\VV)\}.
\end{align*}
It is also valid  for  $\underline{\rm mdim}_M^{\textbf{a}}(T,X,\{d_X,d_Y\})$ by changing $\limsup_{\epsilon \to 0}$ into $\liminf_{\epsilon \to 0}$.
\end{lem}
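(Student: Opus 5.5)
The three parts are of different nature, so I will treat them separately. Parts (1) and (2) are essentially known facts, and (3) is a consequence of Lemma \ref{lem 4.1} together with the comparison inequalities (\ref{equ 4.2}).

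For (1), I would invoke Romagnoli's original definition of local measure-theoretic entropy \cite{rom03}. There $h_\mu^{+}(T,\UU)=\lim_{n}\frac1n\inf_{\alpha\succ\UU^n}H_\mu(\alpha)$, where the limit exists by subadditivity: if $\alpha\succ \UU^n$ and $\beta\succ\UU^m$, then $\alpha\vee T^{-n}\beta\succ \UU^{n+m}$. The identity $h_\mu^{+}(T,\UU)=h_\mu(T,\UU)$ (the version with $\inf_{\alpha\succ\UU}h_\mu(T,\alpha)$) is the theorem of Huang and Ye / Glasner–Weiss, recorded in \cite{hyz11}; I would simply cite it. For (2), I set $Z=\sum_j e^{x_j}$ and $q_i=e^{x_i}/Z$. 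Then the left side minus $\log Z$ equals $\sum_i p_i\log(q_i/p_i)$, which is $\le\log\sum_i q_i=0$ by Jensen's inequality (concavity of $\log$), with the usual convention $0\log 0=0$.

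For (3), I fix $\epsilon$ and write $A(\epsilon)$ for the inner expression $\sup_{\mu\in E(X,T)}\{\cdots\}$. The goal is to sandwich $h_{top}^{\textbf a}(T,X,\{d_X,d_Y\},\epsilon)$ between $A$ evaluated at comparable scales.

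\emph{Upper bound for $A(\epsilon)$.} For each $\mu$ and each $\UU$, the variational principle of Lemma \ref{lem 4.1} (or the classical local version) gives $h_\mu(T,\UU)\le h_{top}(T,\UU)$, and likewise $h_{\pi_*\mu}(S,\VV)\le h_{top}(S,\VV)$. Taking infima separately over $\UU$ and over $\VV$ yields $A(\epsilon)\le h_{top}^{\textbf a}(T,X,\{d_X,d_Y\},\epsilon)$.

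\emph{Lower bound.} This is exactly the display derived in the proof of Theorem \ref{thm 1.1}:
\[
h_{top}^{\textbf a}(T,X,\{d_X,d_Y\},\epsilon)\le A(\epsilon/8).
\]
It is obtained by replacing each infimum with the supremum over covers with $\diam\le\epsilon$ and $\Leb\ge\epsilon/4$, as in (\ref{equ 4.2}). Lemma \ref{lem 4.1} is then applied to the pair $(\UU,\VV)$, and the double supremum is exchanged with the supremum over $\mu$.

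The subtle point is this exchange. Lemma \ref{lem 4.1} gives the supremum over $\mu$ for each fixed pair, and swapping two suprema is always allowed. What is needed after the swap is $\sup_{\UU,\VV}h_\mu(\cdot)\le\inf_{\diam\le\epsilon/8}h_\mu(\cdot)$ for each fixed $\mu$, which is the second line of (\ref{equ 4.2}). That line holds because a cover with Lebesgue number $\ge\epsilon/4$ is refined by every cover of diameter $\le\epsilon/8$, and $h_\mu(T,\cdot)$ is monotone under refinement.

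Combining the two bounds gives
\[
A(\epsilon)\le h_{top}^{\textbf a}(T,X,\{d_X,d_Y\},\epsilon)\le A(\epsilon/8).
\]
Since $\log(8/\epsilon)/\log(1/\epsilon)\to1$, dividing by $\log\frac1\epsilon$ and taking $\limsup$ (respectively $\liminf$) along $\epsilon\to0$ gives equality of the two normalized limits. The reparametrization $\epsilon\mapsto\epsilon/8$ does not change a $\limsup$ or $\liminf$ of this normalized quantity.
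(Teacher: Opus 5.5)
Your proposal is correct and follows the paper's proof. For (1) and (2) you rely on the same known results (the paper cites Walters' Lemma 9.9 for (2), which is proved by a Jensen-type argument like yours). For (3) the paper uses the same ingredients, (\ref{equ 4.2}) together with Lemma \ref{lem 4.1}, and only orders the sandwich differently: writing $A(\epsilon)$ for your inner supremum, it proves $h_{top}^{\textbf{a}}(T,X,\{d_X,d_Y\},\epsilon)\le A(\epsilon/8)\le h_{top}^{\textbf{a}}(T,X,\{d_X,d_Y\},\epsilon/8)$, whereas you prove $A(\epsilon)\le h_{top}^{\textbf{a}}(T,X,\{d_X,d_Y\},\epsilon)\le A(\epsilon/8)$. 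A cosmetic point on (1): in Romagnoli's notation the limit $\lim_n\frac1n\inf_{\alpha\succ\UU^n}H_\mu(\alpha)$ is $h_\mu^{-}(T,\UU)$ and $\inf_{\alpha\succ\UU}h_\mu(T,\alpha)$ is $h_\mu^{+}(T,\UU)$, so your labels are swapped. Their equality is due to Huang--Maass--Romagnoli--Ye and Glasner--Weiss, which is what the paper cites.
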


\begin{proof}
(1). It follows from  \cite{hmry04, gw06}  (see also \cite[Theorem 4.14]{hyz11}).

(2).  An available proof is given in \cite[Lemma 9.9, p.217]{w82}.

(3).  By (\ref{equ 4.2}) and (\ref{equ 4.3}), for every $\epsilon >0$ we have 
\begin{align*}
h_{top}^{\textbf{a}}(T,X,\{d_X,d_Y\},\epsilon)
\leq& \sup_{\mu \in E(X,T)}\{\inf_{\diam (\UU,d_X)\leq \frac{\epsilon}{8}} \inf_{\diam (\VV,d_Y)\leq \frac{\epsilon}{8}}(a_1h_{\mu}(T,\UU)+a_2 h_{\pi_{*}\mu} (S,\VV))\},\\
\leq & h_{top}^{\textbf{a}}(T,X,\{d_X,d_Y\},\frac{\epsilon}{8}).
\end{align*}
This implies the desired variational principle.
\end{proof}
\begin{thm}\label{thm 4.8}
Let  $\pi:(X,d_X, T)\rightarrow (Y,d_Y,S)$ be a factor map. Then  for any $\omega \in (0,1]$,
\begin{itemize}
\item [(1)]$\overline{\rm mdim}_M^{\omega}(X,T|\pi,\{d_X,d_Y\})\leq\overline{\rm mdim}_M^{(\boldsymbol{\omega, 1-\omega})}({T},{X},\{d_X,d_Y\})+\omega \cdot  \overline{\rm mdim}_M({S},{Y},d_Y)$. 
\item [(2)]  $\overline{\rm mdim}_M^{\omega}(X,T|\pi,\{d_X,d_Y\})\geq\overline{\rm mdim}_M^{(\boldsymbol{\omega, 1-\omega})}({T},{X},\{d_X,d_Y\})$. 
\end{itemize}
\end{thm}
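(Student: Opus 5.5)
For both parts I compare the counting quantity $\#^{\omega}(\pi,n,\epsilon)$ with the entropies of open covers appearing in $h_{top}^{(\omega,1-\omega)}(T,X,\{d_X,d_Y\},\epsilon)$. Changing $\epsilon$ by a fixed multiplicative constant does not affect the limits, since $\log\frac{1}{c\epsilon}/\logf\to 1$. Throughout I use the elementary fact that if $\Leb(\UU,d_X)\ge\epsilon$, then every subset of $X$ with $(d_X)_n$-diameter $<\epsilon$ lies in some element of $\UU^n$. Conversely, a finite open cover with $\diam\le \epsilon$ makes $N(\UU^n)$ comparable to $\#(X,n,2\epsilon)$.

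\emph{Part (1).} Fix $\epsilon$ and $n$, and take any open cover $\{V_1,\dots,V_k\}$ of $Y$ with $(d_Y)_n$-diameters $<\epsilon$ and $k=\#(Y,n,\epsilon)$. Since $\#(\pi^{-1}(V_i),n,\epsilon)\le \#(X,n,\epsilon)$, we get
\[
\#^{\omega}(\pi,n,\epsilon)\le \#(Y,n,\epsilon)\,\#(X,n,\epsilon)^{\omega}.
\]
Writing $\log$ of the right side as $\bigl[\omega\log\#(X,n,\epsilon)+(1-\omega)\log\#(Y,n,\epsilon)\bigr]+\omega\log\#(Y,n,\epsilon)$, I divide by $n$ and let $n\to\infty$. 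I then pass from $\#(\cdot,n,\epsilon)$ to $\inf_{\diam\le c\epsilon}h_{top}(\cdot,\cdot)$ using the comparison above, as in Proposition \ref{prop 2.1}. Finally I divide by $\logf$, take $\limsup_{\epsilon\to0}$, and use subadditivity of $\limsup$. This gives the first term bounded by $\overline{\rm mdim}_M^{(\omega,1-\omega)}$ and the second by $\omega\,\overline{\rm mdim}_M(S,Y,d_Y)$.

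\emph{Part (2).} This is the substantive direction, and I would argue measure-theoretically in the spirit of Tsukamoto. Fix $\mu\in E(X,T)$, $\UU\in\CC_X^o$ and $\VV\in\CC_Y^o$ with diameters $\le 4\epsilon$ and Lebesgue numbers $\ge\epsilon$. Given a near-optimal cover $\{V_i\}$ for $\#^{\omega}(\pi,n,\epsilon)$, with $m_i=\#(\pi^{-1}(V_i),n,\epsilon)$ and corresponding covers $\{U_{i,j}\}_j$ of $\pi^{-1}(V_i)$, I build two partitions:
\begin{itemize}
\item a Borel partition $\beta=\{B_i\}$ of $Y$ with $B_i\subset V_i$; by the Lebesgue number property, $\beta\succ\VV^n$;
\item a partition $\alpha$ of $X$ with each $\pi^{-1}(B_i)$ split into at most $m_i$ pieces inside the $U_{i,j}$; then $\alpha\succ\UU^n$ and $\alpha\succ\pi^{-1}\beta$.
\end{itemize}
With $p_i=\mu(\pi^{-1}B_i)$, and using $H_\mu(\pi^{-1}\beta)=H_{\pi_*\mu}(\beta)$ together with $H_\mu(\alpha)=H_\mu(\pi^{-1}\beta)+H_\mu(\alpha|\pi^{-1}\beta)$,
\[
\omega H_\mu(\alpha)+(1-\omega)H_{\pi_*\mu}(\beta)=H_{\pi_*\mu}(\beta)+\omega H_\mu(\alpha|\pi^{-1}\beta)\le\sum_i p_i(-\log p_i+\omega\log m_i).
\]
By Lemma \ref{lem 3.9}(2) with $x_i=\omega\log m_i$, the right side is at most $\log\sum_i m_i^{\omega}\approx\log\#^{\omega}(\pi,n,\epsilon)$.

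Next I take infima over $\alpha\succ\UU^n$ and $\beta\succ\VV^n$ separately (valid, since the left side only increases when bounded below by infima), divide by $n$, and apply Lemma \ref{lem 3.9}(1). This yields
\[
\omega h_\mu(T,\UU)+(1-\omega)h_{\pi_*\mu}(S,\VV)\le \lim_{n}\tfrac1n\log\#^{\omega}(\pi,n,\epsilon).
\]
Bounding the left side below by the infima over covers of diameter $\le4\epsilon$, taking the supremum over $\mu\in E(X,T)$, dividing by $\logf$ and taking $\limsup$, Lemma \ref{lem 3.9}(3) (at scale $4\epsilon$) yields (2).

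The main obstacle I expect is the bookkeeping in the partition construction. The pieces must be simultaneously subordinate to $\UU^n$, to the sets $U_{i,j}$, and to $\pi^{-1}\beta$, with the counts $m_i$ controlling the conditional entropies. A second point of care is that the $(d_X)_n$-diameter condition $<\epsilon$ must match the Lebesgue number $\ge\epsilon$.
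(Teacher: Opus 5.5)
Your proposal is correct and follows essentially the same route as the paper. Part (1) rests on $\#^{\omega}(\pi,n,\epsilon)\le N(\VV^n)\,N(\UU^n)^{\omega}$ (you use the equivalent $\#(Y,n,\epsilon)\,\#(X,n,\epsilon)^{\omega}$). Part (2) builds $\beta\succ\VV^n$ and an $\alpha$ refining both $\UU^n$ and $\pi^{-1}\beta$ from an optimal cover for $\#^{\omega}$, then applies $\omega H_\mu(\alpha)+(1-\omega)H_{\pi_*\mu}(\beta)=H_{\pi_*\mu}(\beta)+\omega H_\mu(\alpha|\pi^{-1}\beta)$ together with Lemma \ref{lem 3.9}(1)--(3); your version is slightly more careful than the paper's, since you invoke that identity only when $\alpha\succ\pi^{-1}\beta$, which is exactly when it holds.
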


\begin{proof}
(1). 
Let  $\UU, \VV$ be two open covers of $X$ and $Y$ with $\diam(\UU,d_X)\leq \frac{\epsilon}{2}$ and  $\diam(\VV,d_Y)\leq \frac{\epsilon}{2}$, respectively. 
Then   the diameter of every element of $\VV^n$ with respect to $(d_Y)_n$ is at most $\frac{\epsilon}{2}$. Using this fact, choose a family  $\{V_1,..., V_{N(\VV^n)}\}$ of open sets $Y$ with $\diam(V_j,d_Y)<\epsilon$.  We have $$\#^{\omega}(\pi,n,\epsilon)\leq  \sum_{j=1}^{N(\VV^n)}(\#(\pi^{-1}(V_j),n,\epsilon))^{\omega} \leq N(\VV^n)\cdot (N(\UU^n))^{\omega}.$$
This yields that
\begin{align*}
\lim_{n \to \infty}\frac{\log  \#^{\omega}(\pi,n,\epsilon)}{n}
\leq &\omega (\inf_{\diam(\UU,d_X)
\leq \frac{\epsilon}{2}}h_{top}(T,\UU)+\inf_{\diam(\VV,d_Y)\leq \frac{\epsilon}{2}}h_{top}(S,\VV))\\
+&
(1-\omega)\cdot
\inf_{\diam(\VV,d_Y)\leq \frac{\epsilon}{2}}h_{top}(S,\VV).
\end{align*}
Hence, we   get the first inequality.

(2). Fix $\epsilon >0$ and $\mu \in E(X,T)$. Choose two open covers $\UU \in \CC_X^o, \VV\in \CC_Y^o$ such that $\diam(\UU,d_X)<\epsilon$, $\Leb(\UU,d_X)\geq \frac{\epsilon}{4}$,  and $\diam(\VV,d_Y)<\epsilon$, $\Leb(\VV,d_Y)\geq \frac{\epsilon}{4}$. Let $\alpha$ be a finite Borel  partition of $X$ that refines  $\UU^n$, and let $\beta$ be a finite Borel  partition of $Y$ that refines  $\VV^n$.  It is easy to show 
$$\omega H_{\mu}(\alpha)+(1-\omega)H_{\pi_{*}\mu}(\beta)=H_{\pi_{*}\mu}(\beta)+\omega H_{\mu}(\alpha |\pi^{-1}\beta).$$ For any $B\in \beta$, we  denote by
$$\alpha_B:=\{A\in \alpha: A\cap \pi^{-1}B\not=\emptyset\}$$
the atoms of $\alpha$ that have non-empty intersection with $\pi^{-1}B$.
Then  $\pi^{-1}B=\cup_{A\in \alpha_B}(A\cap \pi^{-1}B)$ and $H_{\mu}(\alpha |\pi^{-1}\beta)\leq \sum_{B\in \beta}\pi_{*}\mu(B)\log \#\alpha_B$.  Thus, by Lemma \ref{lem 3.9},(2) we have
\begin{align*}
\omega H_{\mu}(\alpha)+(1-\omega)H_{\pi_{*}\mu}(\beta)\leq  \sum_{B\in \beta}\pi_{*}\mu(B)(\log (\#\alpha_B)^{\omega}-\pi_{*}\mu(B))
\leq  \log {\sum_{B\in \beta}(\#\alpha_B)^{\omega}}.
\end{align*}
The arbitrariness of $\alpha $ and $\beta$  implies that
\begin{align}\label{inequ 5.7}
&\omega\inf_{\alpha \succ \UU^n}H_{\mu}(\alpha)+(1-\omega) \inf_{\beta \succ \VV^n}H_{\pi_{*}\mu}(\beta)\\
\leq& \log \inf_{\alpha \succ \UU^n,\atop \beta\succ \VV^n} {\sum_{B\in \beta}(\#\alpha_B)^{\omega}}
\leq  \log \inf_{\alpha \succ \UU^n, \beta\succ \VV^n,\atop \alpha \succ \pi^{-1}\beta } {\sum_{B\in \beta}(\#\alpha_B)^{\omega}} \nonumber.
\end{align}
Choose  an  open cover $\{V_j\}_{j=1}^m$ of $Y$ with $\diam(V_j, (d_Y)_n)<\frac{\epsilon}{8}$  for every $1\leq j\leq m$ such that
\begin{align}\label{inequ 5.8}
\#^{\omega}(\pi,n,\frac{\epsilon}{8})=\sum_{j=1}^m
(\#(\pi^{-1}(V_j),n,\frac{\epsilon}{8}))^{\omega}.
\end{align}
Using the open cover $\{V_j\}_{j=1}^m$,  we  construct a  partition $\beta=\{V_j^{'}\}_{j=1}^m$ of $Y$ with  $V_j^{'}\subset V_j$ and $\diam (V_j{'},(d_Y)_n)<\frac{\epsilon}{8}$ for every $1\leq j \leq m$. Then  $\beta \succ  \VV^n$ since $\Leb(\VV)\geq  \frac{\epsilon}{4}$. 
For each  $\pi^{-1}V_j{'}$, we  construct a finite partition  $\alpha_j$ of $\pi^{-1}V_j{'}$ with  $\#\alpha_j=\#(\pi^{-1}(V_j^{'}),n,\frac{\epsilon}{8})$ and  $\diam(A,(d_X)_n)<\frac{\epsilon}{8}$ for every  $A\in \alpha_j$. Then the family $$\alpha=\{A\}_{ A\in \alpha_j, 1\leq j\leq m}$$ is a partition of $X$ that refines $\UU^n$ since $\Leb(\UU)\geq \frac{\epsilon}{4}$, and $\alpha \succ \pi^{-1}(\beta)$.  Hence, by (\ref{inequ 5.7}) and (\ref{inequ 5.8}) we  obtain
\begin{align}\label{inequ 5.9}
\omega\inf_{\alpha \succ \UU^n}H_{\mu}(\alpha)+(1-\omega) \inf_{\beta \succ \VV^n}H_{\pi_{*}\mu}(\beta)\leq  \log \#^{\omega}(\pi,n,\frac{\epsilon}{8}).
\end{align}
Then, by Lemma \ref{lem 3.9},(1) we get
\begin{align*}
\sup_{\mu \in E(X,T)}\{\omega  \inf_{\diam(\UU,d_X)\leq \epsilon}h_{\mu}(T,\UU)+(1-\omega) \inf_{\diam(\VV,d_Y)\leq \epsilon}h_{\pi_{*}\mu}(S,\VV)\}
\leq  \lim_{n \to \infty}\frac{\log  \#^{\omega}(\pi,n,\frac{\epsilon}{8})}{n}.
\end{align*}
Using  Lemma \ref{lem 3.9},(3), this shows (2).
\end{proof}

We present  several  corollaries  as an immediate consequence of Theorem  \ref{thm 4.8}.

\begin{cor}\label{cor 3.11}
Let  $\pi:(X,d_X, T)\rightarrow (Y,d_Y,S)$ be a factor map. If $\overline{\rm mdim}_M({S},{Y},d_Y)=\underline{\rm mdim}_M({S},{Y},d_Y)$,   then for every  $\omega \in (0,1]$,
\begin{align*}
&\omega \cdot \overline{\rm mdim}_M({T},{X},d_X) +(1-\omega) {\rm mdim}_M({S},{Y},d_Y)\\
\leq&
\overline{\rm mdim}_M^{\omega}(X,T|\pi,\{d_X,d_Y\})
\\
\leq&   \omega \cdot \overline{\rm mdim}_M({T},{X},d_X) +{\rm mdim}_M({S},{Y},d_Y).
\end{align*}
\end{cor}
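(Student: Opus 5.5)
The plan is to sandwich $\overline{\rm mdim}_M^{\omega}(X,T|\pi,\{d_X,d_Y\})$ between the two bounds of Theorem \ref{thm 4.8}, after evaluating the weighted quantity $\overline{\rm mdim}_M^{(\boldsymbol{\omega, 1-\omega})}(T,X,\{d_X,d_Y\})$ explicitly under the hypothesis $\overline{\rm mdim}_M(S,Y,d_Y)=\underline{\rm mdim}_M(S,Y,d_Y)$. The key claim is
\begin{align*}
\overline{\rm mdim}_M^{(\boldsymbol{\omega, 1-\omega})}(T,X,\{d_X,d_Y\})=\omega\cdot \overline{\rm mdim}_M(T,X,d_X)+(1-\omega){\rm mdim}_M(S,Y,d_Y).
\end{align*}
Once this is in hand, Theorem \ref{thm 4.8}(2) gives the lower bound of the corollary immediately. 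Theorem \ref{thm 4.8}(1) gives the upper bound as
\begin{align*}
\omega\cdot\overline{\rm mdim}_M(T,X,d_X)+(1-\omega){\rm mdim}_M(S,Y,d_Y)+\omega\cdot{\rm mdim}_M(S,Y,d_Y),
\end{align*}
which equals $\omega\cdot \overline{\rm mdim}_M(T,X,d_X)+{\rm mdim}_M(S,Y,d_Y)$.

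To prove the key claim, I would use Definition \ref{df 5.3} together with Proposition \ref{prop 2.1}. Write $f(\epsilon)=\frac{1}{\logf}\inf_{\diam(\UU,d_X)\le\epsilon}h_{top}(T,\UU)$ and $g(\epsilon)=\frac{1}{\logf}\inf_{\diam(\VV,d_Y)\le\epsilon}h_{top}(S,\VV)$. Then the weighted upper metric mean dimension is $\limsup_{\epsilon\to0}(\omega f(\epsilon)+(1-\omega)g(\epsilon))$. By Proposition \ref{prop 2.1}, $\limsup_{\epsilon\to0} f=\overline{\rm mdim}_M(T,X,d_X)$. By the same proposition and the hypothesis, $g(\epsilon)$ actually converges to ${\rm mdim}_M(S,Y,d_Y)$. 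The elementary fact $\limsup(a_\epsilon+b_\epsilon)=\limsup a_\epsilon+\lim b_\epsilon$, valid when $b_\epsilon$ converges, then yields the claim.

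The only delicate point is the case where some quantity is infinite. If ${\rm mdim}_M(S,Y,d_Y)=\infty$, then both bounds are infinite, or the lower one forces infinity, and the inequalities hold trivially in $[0,\infty]$. Since all terms are nonnegative and $\omega\in(0,1]$, there are no $\infty-\infty$ issues. So the main step, such as it is, is verifying that the convergence of $g$ is exactly what the equality of upper and lower metric mean dimension gives via Proposition \ref{prop 2.1}. This is the same observation already used at the end of the proof of Theorem \ref{thm 1.1}.
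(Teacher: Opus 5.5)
Your proposal is correct and is essentially the paper's argument: the corollary is read off from Theorem \ref{thm 4.8} once $\overline{\rm mdim}_M^{(\boldsymbol{\omega, 1-\omega})}(T,X,\{d_X,d_Y\})=\omega\cdot\overline{\rm mdim}_M(T,X,d_X)+(1-\omega){\rm mdim}_M(S,Y,d_Y)$ is evaluated via Proposition \ref{prop 2.1} and the hypothesis on $Y$, exactly as in the last equality of (\ref{inequ 3.5}). Your separate infinite case is unnecessary, and for $\omega=1$ the lower bound need not be infinite as you suggest. Since $f,g\ge 0$, with the convention $0\cdot\infty=0$ the identity $\limsup(\omega f+(1-\omega)g)=\omega\limsup f+(1-\omega)\lim g$ already holds in $[0,\infty]$.
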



\begin{cor}\label{cor 4.9}
Let  $\pi:(X,d_X, T)\rightarrow (Y,d_Y,S)$ be a factor map. If $\overline{\rm mdim}_M({S},{Y},d_Y)=0$, then for any $\omega \in (0,1]$,
\begin{align*}
\overline{\rm mdim}_M^{\omega}(X,T|\pi,\{d_X,d_Y\})=  \overline{\rm mdim}_M^{(\boldsymbol{\omega, 1-\omega})}({T},{X},\{d_X,d_Y\})
=\omega \cdot\overline{\rm mdim}_M({T},{X},d_X).
\end{align*}
\end{cor}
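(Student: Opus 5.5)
The corollary should follow by combining the two inequalities of Theorem \ref{thm 4.8} with an explicit evaluation of the $(\boldsymbol{\omega,1-\omega})$-weighted metric mean dimension. That evaluation is available because the factor system has zero upper metric mean dimension. Fix $\omega\in(0,1]$. First, the hypothesis $\overline{\rm mdim}_M(S,Y,d_Y)=0$ forces $\underline{\rm mdim}_M(S,Y,d_Y)=0$ as well, since $0\le \underline{\rm mdim}_M\le\overline{\rm mdim}_M$. So ${\rm mdim}_M(S,Y,d_Y)=0$, and by Proposition \ref{prop 2.1}
$$\lim_{\epsilon\to 0}\frac{1}{\logf}\inf_{\diam(\VV,d_Y)\le\epsilon}h_{top}(S,\VV)=0 .$$

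Next I compute $\overline{\rm mdim}_M^{(\boldsymbol{\omega,1-\omega})}(T,X,\{d_X,d_Y\})$ directly from Definition \ref{df 5.3}. Dividing $h_{top}^{\textbf{a}}(T,X,\{d_X,d_Y\},\epsilon)$ by $\logf$ gives two terms:
\begin{itemize}
\item the term $\omega\inf_{\diam(\UU,d_X)\le\epsilon}h_{top}(T,\UU)/\logf$, whose $\limsup$ is $\omega\,\overline{\rm mdim}_M(T,X,d_X)$ by Proposition \ref{prop 2.1};
\item the term $(1-\omega)\inf_{\diam(\VV,d_Y)\le\epsilon}h_{top}(S,\VV)/\logf$, which converges to $0$ by the first step.
\end{itemize}
A genuine limit of $0$ added to a $\limsup$ leaves the $\limsup$ unchanged. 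Hence
$$\overline{\rm mdim}_M^{(\boldsymbol{\omega,1-\omega})}(T,X,\{d_X,d_Y\})=\omega\,\overline{\rm mdim}_M(T,X,d_X).$$
This holds even when the right-hand side is $+\infty$.

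Finally, apply Theorem \ref{thm 4.8}. Part (2) gives the lower bound $\overline{\rm mdim}_M^{\omega}(X,T|\pi,\{d_X,d_Y\})\ge \omega\,\overline{\rm mdim}_M(T,X,d_X)$. Part (1) gives the upper bound $\omega\,\overline{\rm mdim}_M(T,X,d_X)+\omega\cdot 0$. The two bounds coincide, which yields the chain of equalities in the statement. Equivalently, one can read it off Corollary \ref{cor 3.11} with ${\rm mdim}_M(S,Y,d_Y)=0$.

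The only point requiring care is the $\limsup$ of a sum. It splits as claimed only because the $Y$-term converges, and convergence holds because the upper metric mean dimension of $Y$ vanishes. If only $\underline{\rm mdim}_M(S,Y,d_Y)=0$ were assumed, the $Y$-term would merely have $\liminf$ zero and the argument would fail. Everything else is a direct substitution into Theorem \ref{thm 4.8}.
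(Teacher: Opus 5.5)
Your proof is correct and follows the paper's route: the paper presents this corollary as an immediate consequence of Theorem \ref{thm 4.8}, combining parts (1) and (2). Your explicit evaluation of the $(\boldsymbol{\omega,1-\omega})$-weighted term via Definition \ref{df 5.3} and Proposition \ref{prop 2.1}, using that the $Y$-term genuinely converges to $0$, is exactly the step the paper leaves implicit.
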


\begin{rem}\label{rem 3.13}
For some factor maps between two dynamical systems, if the metric mean dimension of the factor system is positive,  it may happen that
$$\overline{\rm mdim}_M^{(\boldsymbol{\omega, 1-\omega})}({T},{X},\{d_X,d_Y\})<\overline{\rm mdim}_M^{\omega}(X,T|\pi,\{d_X,d_Y\}),$$
and  the weighted metric mean dimension $\overline{\rm mdim}_M^{\omega}(X,T|\pi,\{d_X,d_Y\})$ is  not a convex combination of the metric mean dimensions of the extension system and the factor system.

Indeed, considering the  Example \ref{ex 2,3},  and letting $s_1,s_2 \in (0,1)$ with $s_1<s_2$ and $id: (\sigma,X^{\mathbb{Z}},d_{s_1}^{\mathbb{Z}}) \rightarrow (\sigma,X^{\mathbb{Z}},d_{s_2}^{\mathbb{Z}}) $ be the indentity map.  For every $0<\omega< 1$, we have
\begin{align*}
\overline{\rm mdim}_M^{(\boldsymbol{\omega, 1-\omega})}({\sigma},{X^{\mathbb{Z}}},\{d_{s_1}^{\mathbb{Z}},d_{s_2}^{\mathbb{Z}}\})=&\omega \cdot  {{\rm mdim}}_M(\sigma,X^{\mathbb{Z}},d_{s_1}^{\mathbb{Z}})+ (1-\omega){{\rm mdim}}_M(\sigma,X^{\mathbb{Z}},d_{s_2}^{\mathbb{Z}})\\
=&\omega s_1+(1-\omega)s_2.
\end{align*} 
Let $\{V_j\}_{j=1}^m$ be an  open cover  of $X^{\mathbb{Z}}$ with $\diam(V_j, d_{s_2}^{\mathbb{Z}})<\epsilon$. Notice that $d_{s_1}^{\mathbb{Z}}\leq d_{s_2}^{\mathbb{Z}}$. Then  $\diam(id^{-1}(V_j), d_{s_1}^{\mathbb{Z}})<\epsilon$, and hence $\#^{\omega}(\pi,n,\epsilon)
$ coincides with the smallest cardinality of   a family of open cover  $\{V_j\}_{j=1}^m$  of $X^{\mathbb{Z}}$ with   $\diam(V_j, d_{s_2}^{\mathbb{Z}})<\epsilon$.   Thus, for every $0<\omega <1$ we have 
$$\overline{\rm mdim}_M^{\omega}(X^{\mathbb{Z}},\sigma|\pi,\{d_{s_1}^{\mathbb{Z}},d_{s_2}^{\mathbb{Z}}\})=s_2.$$
This means that 
$\overline{\rm mdim}_M^{(\boldsymbol{\omega, 1-\omega})}({\sigma},{X^{\mathbb{Z}}},\{d_{s_1}^{\mathbb{Z}},d_{s_2}^{\mathbb{Z}}\})< \overline{\rm mdim}_M^{\omega}(X^{\mathbb{Z}},\sigma|\pi,\{d_{s_1}^{\mathbb{Z}},d_{s_2}^{\mathbb{Z}}\}).$

\end{rem}

Notice that the $0$-weighted metric mean dimension  is exactly the metric mean dimension of $Y$. By Corollary \ref{cor 3.11}, the following corollary shows how the $\omega$-weighted metric mean dimension  varies with the parameter $\omega$ by letting $\omega \to 0$.  

\begin{cor}\label{cor 4.11}
Let  $\pi:(X,d_X, T)\rightarrow (Y,d_Y,S)$ be a factor map. If $\overline{\rm mdim}_M({S},{Y},d_Y)=\underline{\rm mdim}_M({S},{Y},d_Y)$ and  $\overline{\rm mdim}_M({T},{X},d_X)<\infty$, then  as  $\omega \to 0$,
$$\overline{\rm mdim}_M^{\omega}(X,T|\pi,\{d_X,d_Y\}) \rightarrow {\rm mdim}_M(S,Y,d_Y).$$
\end{cor}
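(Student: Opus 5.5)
The plan is a sandwich argument using the two-sided estimate of Corollary \ref{cor 3.11}. Set $m_Y:={\rm mdim}_M(S,Y,d_Y)$, which exists by hypothesis, and $m_X:=\overline{\rm mdim}_M(T,X,d_X)$, which is finite by hypothesis. For every $\omega\in(0,1]$, Corollary \ref{cor 3.11} gives
\begin{align*}
\omega\, m_X+(1-\omega)\, m_Y\le \overline{\rm mdim}_M^{\omega}(X,T|\pi,\{d_X,d_Y\})\le \omega\, m_X+m_Y .
\end{align*}
Since $m_X<\infty$ and $m_Y\le m_X<\infty$ (by the Lipschitz-free comparison, or simply because the lower bound must be finite), both outer expressions are affine in $\omega$ with finite coefficients. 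As $\omega\to0^+$, each of them tends to $m_Y$. The squeeze then gives the claim.

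One point needs checking: finiteness of $m_Y$. If $m_Y=+\infty$, the statement still reads $+\infty\to+\infty$. In that case the lower bound $(1-\omega)m_Y=+\infty$ already forces the weighted quantity to be $+\infty$ for all $\omega<1$, so the conclusion holds trivially. This degenerate case should be mentioned separately. The finiteness of $m_X$ is exactly what prevents the term $\omega\, m_X$ from becoming an indeterminate $0\cdot\infty$.

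There is no real obstacle, since all the analytic work sits in Theorem \ref{thm 4.8} via Corollary \ref{cor 3.11}. The only care needed is with infinite values, as noted above.
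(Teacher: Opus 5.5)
Your sandwich argument via Corollary \ref{cor 3.11} is essentially how the paper obtains this corollary, and your separate treatment of the case ${\rm mdim}_M(S,Y,d_Y)=\infty$ is correct. One small slip: the parenthetical claim ${\rm mdim}_M(S,Y,d_Y)\le \overline{\rm mdim}_M(T,X,d_X)$ is false for general (non-Lipschitz) factor maps, as the example in Remark \ref{rem 3.13} shows (there $s_2>s_1$). Your argument does not need it, because the case of infinite ${\rm mdim}_M(S,Y,d_Y)$ is handled separately.
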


\subsection{Feng-Huang's   weighted Bowen topological entropy}

Feng and Huang  \cite{fw1616} introduced the weighted Bowen topological entropy of factor maps, which  resembles the  Hausdorff dimension  in  fractal geometry.

Firstly, we review   the  definition of Bowen metric mean dimension defined by  Carath\'eodory-Pesin structures \cite{p97}.  Let $(X,d,T)$ be a TDS.  For  $s\geq 0$ and $N\in \mathbb{N}$, we put
$$\mathcal{M}_{N,s}(X,\UU)=\inf\{\sum_{i\in I}e^{-n_i s}\},$$
where the infimum ranges over all  finite or countable  families  $\{A_i\}_{i\in I}$  satisfying $X=\cup_{i\in I}A_i$,  $A_i\in \UU^{n_i}$ and  $n_i\geq N$ for all $i\in I$.

Let $\mathcal{M}_{s}(X,\UU)=\lim\limits_{N\to \infty}\mathcal{M}_{N,s}(X,\UU)$. Then  there is  a critical value  of the parameter $s$, denoted by $h_{top}^B(T,X,\UU)$, such that $\mathcal{M}_{s}(X,\UU)$  jumps  from  $\infty$ to $0$, i.e.,
\begin{align*}
h_{top}^B(T,X,\UU):
=\inf\{s>0: \mathcal{M}_{\mathcal{U},s}(Z)=0\}
=\sup\{s>0: \mathcal{M}_{\mathcal{U},s}(Z)=\infty\}.
\end{align*}

The \emph{Bowen upper  metric mean dimension of $X$}  is defined by 
$$\overline{\rm mdim}_M^B(T,X,d)=\limsup_{\epsilon \to 0} \frac{1}{\logf}\inf_{\diam (\UU,d) \leq \epsilon}h_{top}^B(T,X,\UU).$$
By \cite[Proposition  3.4, (c)]{ycz22}, we have $\overline{\rm mdim}_M^B(T,X,d)=\overline{\rm mdim}_M(T,X,d).$

Next we recall Feng-Huang's  weighted Bowen topological entropy \cite{fw1616}.  Let $(X,d_X,T)$ and $(Y,d_Y,S)$ be two TDSs, and  let $\pi: X\rightarrow Y$  be a factor map. Given $\UU \in \CC_X^o$ and $\VV\in \CC_Y^o$,  the diameter  of $\{\UU,\VV\}$ is defined by $\diam (\UU,\VV):=\max\{\diam(\UU,d_X),\diam (\VV,d_Y)\}.$

Suppose that the weight $\textbf{a}=(a_1,a_2)$ with $a_1>0$ and $a_2\geq 0$ is given.   Each element $A$ in $\UU^{\lceil a_1n \rceil} \vee \pi^{-1} \VV^{\lceil (a_1+a_2)n \rceil}$  can be expressed as  the following form:
\begin{align}\label{equ 4.6}
A=(\bigcap_{j=0}^{\lceil a_1n \rceil-1}T^{-j}U_{i_j})  \bigcap (\bigcap_{j=0}^{\lceil (a_1+a_2)n \rceil-1}\pi^{-1}S^{-j}{V}_{i_j}),
\end{align}
where $U_{i_j}\in \UU$ for $j=0,..., \lceil a_1n \rceil -1 $, and  $V_{i_j}\in \VV$ for $j=0,..., \lceil (a_1+a_2)n \rceil-1 $. Here, $\lceil x \rceil$ denotes the  smallest integer  $\geq x$.  The  geometric length of $A$ is defined by  $m(A)=n$.  

Let $s\geq 0$ and  $N\in \mathbb{N}$.  Set
$$\mathcal{M}_{N,s}^{\textbf{a}}(X,\{\UU,\VV\})=\inf\{\sum_{i\in I}e^{-m(A_i)s}\},$$
where the infimum ranges over all  finite or countable  family  $\{A_i\}_{i\in I}$ satisfying  $X= \cup_{i\in I}A_i$, $A_i \in \UU^{\lceil a_1n \rceil-1}\vee \pi^{-1} \VV^{\lceil a_1+a_2 n \rceil-1}$ and $m(A_i)\geq N$ for all $i\in I$.

Since $\mathcal{M}_{N,s}^{\textbf{a}}(X,\{\UU,\VV\})$ is non-decreasing in $N$,  the limit $$\mathcal{M}_{s}^{\textbf{a}}(X,\{\UU,\VV\})=\lim\limits_{N\to \infty}\mathcal{M}_{N,s}^{\textbf{a}}(X,\{\UU,\VV\})$$ exists.  One
 can check  that there is a  critical value  of  the parameter $s$  such  that $\mathcal{M}_{s}^{\textbf{a}}(X,\{\UU,\VV\})$ jumps  from $\infty$ to $0$.  The  critical value is defined by 
\begin{align*}
h_{top}^{\textbf{a},B}(T,X,\{\UU,\VV\}):&=\inf\{s>0:\mathcal{M}_{s}^{\textbf{a}}(X,\{\UU,\VV\})=0\}\\
&=\sup\{s>0:\mathcal{M}_{s}^{\textbf{a}}(X,\{\UU,\VV\})=\infty\}.
\end{align*}

The \emph{\textbf{a}-weighted Bowen topological entropy of  $X$} is defined by 
$$h_{top}^{\textbf{a},B}(\pi, T)=\sup_{(\UU,\VV)}h_{top}^{\textbf{a},B}(T,X,\{\UU,\VV\}),$$
where the supremum ranges over all pairs $(\UU, \VV)$ with $\UU \in \mathcal{C}_X^o$ and $\VV \in \mathcal{C}_Y^o$.

\begin{df}
We define  the  \textbf{a}-\emph{weighted Bowen upper  and lower metric mean dimensions of $X$} as
\begin{align*}
\overline{\rm {mdim}}_M^{\textbf{a},B}(T,X,\{d_X,d_Y\})&=\limsup_{\epsilon \to 0}\frac{1}{\log \frac{1}{\epsilon}} \inf_{\diam (\UU,\VV)\leq \epsilon}h_{top}^{\textbf{a},B}(T,X,\{\UU,\VV\}),\\
\underline{\rm {mdim}}_M^{\textbf{a},B}(T,X,\{d_X,d_Y\})&=\liminf_{\epsilon \to 0}\frac{1}{\log \frac{1}{\epsilon}} \inf_{\diam (\UU,\VV)\leq \epsilon}h_{top}^{\textbf{a},B}(T,X,\{\UU,\VV\}),
\end{align*}
respectively.
\end{df}
In particular, Bowen metric mean dimension is exactly  the weighted Bowen metric mean dimension by letting $\textbf{a}=(1,0)$ and $Y$ be a single point. We remark that the  \textbf{a}-weighted Bowen  metric mean dimension depends on the compatible metrics on  $X$ and $Y$, and the weight. Compared with the aforementioned two weighted metric mean dimensions,  this kind of \textbf{a}-weighted  metric mean dimension  is   more difficult  to  compute its precise value.   Hence,  we  introduce \textbf{a}-\emph{weighted  upper and lower capacity  metric mean dimensions}  to estimate the upper bound of \textbf{a}-weighted Bowen  metric mean dimension.

We define the \textbf{a}-weighted capacity topological entropy of $X$ w.r.t. $\{\UU,\VV\}$  as
$$h_{top}^{\textbf{a},UC}(T,X,\{\UU,
\VV\})=\limsup_{n \to \infty}\frac{1}{n}\log N(\UU^{\lceil a_1n \rceil} \vee \pi^{-1} \VV^{\lceil (a_1+a_2)n \rceil}).$$

\begin{df}\label{df 4.14}
The \textbf{a}-\emph{weighted  upper and lower capacity  metric mean dimensions  of $X$} are respectively given by
\begin{align*}
\overline{\rm {mdim}}_M^{\textbf{a},UC}(T,X,\{d_X,d_Y\})&=\limsup_{\epsilon \to 0}\frac{1}{\logf}\inf_{\diam (\UU,\VV)\leq \epsilon} h_{top}^{\textbf{a},UC}(T,X,\{\UU,
\VV\}),\\
\underline{\rm {mdim}}_M^{\textbf{a},UC}(T,X,\{d_X,d_Y\})&=\liminf_{\epsilon \to 0}\frac{1}{\logf}\inf_{\diam (\UU,\VV)\leq \epsilon} h_{top}^{\textbf{a},UC}(T,X,\{\UU,
\VV\}).
\end{align*}
\end{df}

An  equivalent  formulation for  Definition \ref{df 4.14}  is  as follows. For each $n\in \mathbb{N}$, the  \textbf{a}-\emph{weighted Bowen metric on $X$}  is given by
$$D_n^{\textbf{a}}(x,y):=\max\{(d_X)_{\lceil a_1n \rceil}(x,y), (d_Y)_{\lceil (a_1+a_2)n \rceil}(\pi x,\pi y)\},$$
where
$(d_X)_{\lceil a_1n \rceil}(x,y)=\max_{0\leq j\leq \lceil a_1n \rceil-1}d_X(T^jx,T^jy)$ 
and 
$$(d_Y)_{\lceil (a_1+a_2)n \rceil}(\pi x,\pi y)=\max_{0\leq j\leq \lceil (a_1+a_2)n \rceil-1}d_Y(S^j(\pi x),S^j(\pi y)).$$ 

Then it is easy to   show that
\begin{align*}
\overline{\rm {mdim}}_M^{\textbf{a},UC}(T,X,\{d_X,d_Y\})&=\limsup_{\epsilon \to 0}\frac{1}{\logf }\limsup_{n \to \infty}\frac{\log r(X, D_n^{\textbf{a}},\epsilon) }{n},\\
\underline{\rm {mdim}}_M^{\textbf{a},UC}(T,X,\{d_X,d_Y\})&=\liminf_{\epsilon \to 0}\frac{1}{\logf }\limsup_{n \to \infty}\frac{\log r(X, D_n^{\textbf{a}},\epsilon) }{n}.
\end{align*}


\begin{prop}\label{prop 4.14}
Let $\pi:(X,d_X,T)\rightarrow (Y,d_Y,S)$ be a factor map, and  \rm {\textbf{a}}$=(a_1,a_1)$  with $a_1>0$ and $a_2\geq 0$. Then 
\begin{align*}
\overline{\rm {mdim}}_M^{\textbf{a},B}(T,X,\{d_X,d_Y\})&\leq \overline{\rm {mdim}}_M^{\textbf{a},UC}(T,X,\{d_X,d_Y\}),\\
\underline{\rm {mdim}}_M^{\textbf{a},B}(T,X,\{d_X,d_Y\})&\leq \underline{\rm {mdim}}_M^{\textbf{a},UC}(T,X,\{d_X,d_Y\}).
\end{align*}
\end{prop}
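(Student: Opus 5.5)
The plan is to prove the inequality at the level of a fixed pair of covers: for every $\UU\in\CC_X^o$ and $\VV\in\CC_Y^o$,
\begin{align*}
h_{top}^{\textbf{a},B}(T,X,\{\UU,\VV\})\leq h_{top}^{\textbf{a},UC}(T,X,\{\UU,\VV\}).
\end{align*}
Once this holds, take the infimum over pairs with $\diam(\UU,\VV)\leq\epsilon$ on both sides and divide by $\logf$. Then applying $\limsup_{\epsilon\to 0}$ gives the first inequality, and $\liminf_{\epsilon\to 0}$ gives the second, because the inequality holds pointwise in $\epsilon$.

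To prove the cover-level inequality, set $\mathcal{W}_n:=\UU^{\lceil a_1n\rceil}\vee\pi^{-1}\VV^{\lceil(a_1+a_2)n\rceil}$ and $h:=h_{top}^{\textbf{a},UC}(T,X,\{\UU,\VV\})$. Assume $h<\infty$; otherwise there is nothing to prove, though in fact $h$ is finite since $N(\mathcal{W}_n)$ grows at most exponentially. Fix $s>h$ and pick $t$ with $h<t<s$. By the definition of $\limsup$ there is $N_0$ with $N(\mathcal{W}_n)\leq e^{nt}$ for all $n\geq N_0$. Given $N\geq N_0$, choose a minimal subcover of $\mathcal{W}_N$. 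Every element of it has the form (\ref{equ 4.6}) with geometric length $m(A)=N$, so it is an admissible family in the definition of $\mathcal{M}^{\textbf{a}}_{N,s}(X,\{\UU,\VV\})$. This yields
\begin{align*}
\mathcal{M}^{\textbf{a}}_{N,s}(X,\{\UU,\VV\})\leq N(\mathcal{W}_N)e^{-Ns}\leq e^{-N(s-t)}\to 0 .
\end{align*}
Because $\mathcal{M}^{\textbf{a}}_{N,s}$ is non-decreasing in $N$ and its limit $\mathcal{M}^{\textbf{a}}_s$ is bounded by terms tending to $0$, we get $\mathcal{M}^{\textbf{a}}_s(X,\{\UU,\VV\})=0$. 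Therefore $h_{top}^{\textbf{a},B}(T,X,\{\UU,\VV\})\leq s$, and letting $s\downarrow h$ gives the claim.

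The main point needing care is bookkeeping rather than analysis. The definition of $\mathcal{M}^{\textbf{a}}_{N,s}$ in the text has index typos ($\lceil a_1n\rceil-1$ versus $\lceil a_1n\rceil$). I will read it, consistently with (\ref{equ 4.6}), as allowing the elements of $\mathcal{W}_n$ with $m(A)=n$. Any off-by-one shift changes only finitely many indices and does not affect exponential growth rates, so the argument is unaffected. No variational principle or earlier theorem is needed beyond these definitions.
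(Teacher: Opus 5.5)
Your proposal is correct and follows the paper's argument. You bound $\mathcal{M}^{\textbf{a}}_{N,s}(X,\{\UU,\VV\})$ by a minimal subcover of $\UU^{\lceil a_1N\rceil}\vee\pi^{-1}\VV^{\lceil(a_1+a_2)N\rceil}$, all of whose elements have geometric length $N$. This gives $h^{\textbf{a},B}_{top}(T,X,\{\UU,\VV\})\leq h^{\textbf{a},UC}_{top}(T,X,\{\UU,\VV\})$ cover by cover, and you then pass to the infimum over covers of diameter at most $\epsilon$. Your intermediate exponent $t$, which yields $\mathcal{M}^{\textbf{a}}_s=0$ directly, just fills in the step the paper leaves as ``one can show''.
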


\begin{proof}
Let $s> h_{top}^{\textbf{a},UC}(T,X,\{\UU,
\VV\})$. Then for sufficiently large $n$, one has $N(\UU^{\lceil a_1n \rceil} \vee \pi^{-1} \VV^{\lceil (a_1+a_2)n \rceil})<e^{ns}$. If  the family $\{A_i\}_{i=1}^m   \subset U^{\lceil a_1n \rceil} \vee \pi^{-1} \VV^{\lceil (a_1+a_2)n \rceil}$ is a subcover of $X$ satisfying $m=N(\UU^{\lceil a_1n \rceil} \vee \pi^{-1} \VV^{\lceil (a_1+a_2)n \rceil})$, then  $X= \cup_{j=1}^mA_j$ and $m(A_j)=n$ for all $1\leq j\leq m$.  Using this fact, one can show  $h_{top}^{\textbf{a},B}(T,X,\{\UU,\VV\})\leq s$, and hence $h_{top}^{\textbf{a},B}(T,X,\{\UU,\VV\})\leq h_{top}^{\textbf{a},UC}(T,X,\{\UU,
\VV\})$.  This implies the desired inequalities.   
\end{proof}

We compare the weighted Bowen metric mean dimension  with  the metric mean dimensions of factor system and extension system.

\begin{thm}\label{thm 4.13}
Let $\pi:(X,d_X,T)\rightarrow (Y,d_Y,S)$ be a factor map, and \rm {\textbf{a}}$=(a_1,a_1)$   with $a_1>0$ and $a_2\geq 0$. Then 
\begin{align*}
&\max\{a_1\overline{\rm mdim}_M(T,X,d_X), (a_1+a_2)\overline{\rm mdim}_M(S,Y,d_Y)\}\\
\leq &\overline{\rm {mdim}}_M^{\textbf{a},B}(T,X,\{d_X,d_Y\})
\\
\leq&a_1\overline{\rm mdim}_M(T,X,d_X) + (a_1+a_2)\overline{\rm mdim}_M(S,Y,d_Y).
\end{align*}
\end{thm}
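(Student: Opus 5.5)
The plan is to prove the two inequalities separately, comparing everything at the level of fixed pairs of open covers $(\UU,\VV)$ with $\diam(\UU,\VV)\leq\epsilon$.

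For the upper bound, I would go through Proposition \ref{prop 4.14}, which gives $\overline{\rm mdim}_M^{\textbf{a},B}\leq \overline{\rm mdim}_M^{\textbf{a},UC}$. It then suffices to bound the capacity quantity. For covers $\UU\in\CC_X^o$ and $\VV\in\CC_Y^o$, the join $\UU^{\lceil a_1n\rceil}\vee\pi^{-1}\VV^{\lceil(a_1+a_2)n\rceil}$ is covered by intersecting a minimal subcover of $\UU^{\lceil a_1n\rceil}$ with a minimal subcover of $\pi^{-1}\VV^{\lceil (a_1+a_2)n\rceil}$. Hence
\[
N(\UU^{\lceil a_1n\rceil}\vee\pi^{-1}\VV^{\lceil(a_1+a_2)n\rceil})\leq N(\UU^{\lceil a_1n\rceil})\,N(\VV^{\lceil(a_1+a_2)n\rceil}),
\]
where we use that $\pi$ is surjective, so $N(\pi^{-1}\VV^k)=N(\VV^k)$. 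Taking $\frac1n\log$ and $\limsup_n$ gives
\[
h_{top}^{\textbf{a},UC}(T,X,\{\UU,\VV\})\leq a_1h_{top}(T,\UU)+(a_1+a_2)h_{top}(S,\VV).
\]
Taking the infimum over $\UU,\VV$ independently with diameters at most $\epsilon$, then dividing by $\log\frac1\epsilon$ and taking $\limsup_{\epsilon\to0}$, Proposition \ref{prop 2.1} yields the upper bound, since $\limsup$ of a sum is at most the sum of the $\limsup$'s.

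For the lower bound I would prove two inequalities, $\overline{\rm mdim}_M^{\textbf{a},B}\geq a_1\overline{\rm mdim}_M(T,X,d_X)$ and $\overline{\rm mdim}_M^{\textbf{a},B}\geq (a_1+a_2)\overline{\rm mdim}_M(S,Y,d_Y)$, by comparison with the (unweighted) Bowen metric mean dimension, which equals the upper metric mean dimension by \cite[Proposition 3.4]{ycz22}.

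For the first inequality, fix $(\UU,\VV)$ and a cover $\{A_i\}$ of $X$ with $A_i\in \UU^{\lceil a_1 n_i\rceil}\vee\pi^{-1}\VV^{\lceil(a_1+a_2)n_i\rceil}$ and $m(A_i)=n_i\geq N$. Each $A_i$ is contained in an element of $\UU^{\lceil a_1n_i\rceil}$, so $\{A_i\}$ refines to a cover admissible for $\mathcal M_{\lceil a_1N\rceil,\,s/a_1}(X,\UU)$. Because $e^{-\lceil a_1n\rceil s/a_1}\leq e^{-ns}$, this gives $\mathcal M^{\textbf{a}}_{N,s}(X,\{\UU,\VV\})\geq \mathcal M_{\lceil a_1N\rceil,s/a_1}(X,\UU)$. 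Consequently $h_{top}^{\textbf{a},B}(T,X,\{\UU,\VV\})\geq a_1h^B_{top}(T,X,\UU)$.

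For the second inequality, the same argument uses $A_i\subset\pi^{-1}(W_i)$ with $W_i\in\VV^{\lceil(a_1+a_2)n_i\rceil}$. The sets $W_i$ cover $Y$ because $\pi$ is surjective, so $h_{top}^{\textbf{a},B}(T,X,\{\UU,\VV\})\geq (a_1+a_2)h^B_{top}(S,Y,\VV)$. If $a_1+a_2$ is not an integer, the factor of the form $\lceil cn\rceil/n$ tends to $c$ and only causes an arbitrarily small loss, since $n_i\geq N\to\infty$.

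Then I would take infima over $\diam(\UU,\VV)\leq\epsilon$. Since $\diam(\UU,d_X)\leq\epsilon$ and $\diam(\VV,d_Y)\leq\epsilon$ both hold, the right-hand sides are bounded below by $a_1\inf_{\diam(\UU,d_X)\leq\epsilon}h^B_{top}(T,X,\UU)$ and by $(a_1+a_2)\inf_{\diam(\VV,d_Y)\leq\epsilon}h^B_{top}(S,Y,\VV)$, respectively. Dividing by $\log\frac1\epsilon$, taking $\limsup_{\epsilon\to0}$, and applying $\overline{\rm mdim}_M^B=\overline{\rm mdim}_M$ gives both lower bounds. Their maximum is then a lower bound as well.

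The main obstacle is the bookkeeping in the Carathéodory comparison. The paper's definition mixes exponents $\lceil a_1n\rceil-1$ and $\lceil a_1n\rceil$, and one must ensure that the lengths $\lceil a_1n_i\rceil$ tend to infinity uniformly (they do, since $a_1>0$ and $a_1+a_2>0$). One must also handle the rounding $\lceil cn\rceil\geq cn$ in the correct direction: it helps in the lower bound, and contributes $O(1)/n$ in the upper bound.
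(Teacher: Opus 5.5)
Your proposal is correct and follows essentially the same route as the paper: the lower bounds come from dominating the unweighted Carath\'eodory sums for $\UU$ (and, via surjectivity of $\pi$, for $\VV$) by the weighted ones and then invoking $\overline{\rm mdim}_M^B=\overline{\rm mdim}_M$, while the upper bound comes from Proposition \ref{prop 4.14} together with $N(\UU^{\lceil a_1n\rceil}\vee\pi^{-1}\VV^{\lceil(a_1+a_2)n\rceil})\leq N(\UU^{\lceil a_1n\rceil})N(\VV^{\lceil(a_1+a_2)n\rceil})$. One small remark: in the second lower bound the ceiling $\lceil(a_1+a_2)n\rceil\geq(a_1+a_2)n$ works in your favor exactly as in the first, so there is no loss to control there.
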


\begin{proof}
Fix $s> 0$ and $N \in \mathbb{N}$.  Let $\{A_i\}_{i\in I}$ be a finite or countable  family satisfying  $X= \cup_{i\in I}A_i$ and $m(A_i)\geq N$ for all $i\in I$, where each $A_i$ has the form given in  (\ref{equ 4.6}). Then $X$ is also covered by a  finite or countable  family $\{B_i\}_{i\in I}$ with $m(B_i)= \lceil a_1m(A_i)\rceil \geq \lceil a_1N\rceil$ for all $i\in I$. By definitions,  we obtain 
$$\mathcal{M}_{\lceil a_1N\rceil,s}(X,\UU)\leq \sum_{i \in I}e^{- \lceil a_1m(A_i)\rceil s} \leq\sum_{i \in I}e^{-  m(A_i) (a_1s)}.$$
This yields that  $\mathcal{M}_{\lceil a_1N\rceil,s}(X,\UU)\leq  \mathcal{M}_{N,(a_1s)}^{\textbf{a}}(X,\{\UU,\VV\})$. So
$$a_1\overline{\rm mdim}_M(T,X,d_X)\leq \overline{\rm {mdim}}_M^{\textbf{a},B}(T,X,\{d_X,d_Y\}). $$
The similar argument gives  us $$(a_1+a_2)\overline{\rm mdim}_M(S,Y,d_Y)\leq \overline{\rm {mdim}}_M^{\textbf{a},B}(T,X,\{d_X,d_Y\}).$$ 

Now fix  $\UU \in \CC_X^o, \VV\in \CC_Y^o$. Then 
$$h_{top}^{\textbf{a},UC}(T,X,\{\UU,
\VV\}) \leq  a_1 h_{top}(T,\UU) +(a_1+a_2) h_{top}(S,\VV).$$
Using Proposition \ref{prop 4.14}, we  get $$\overline{\rm {mdim}}_M^{\textbf{a},B}(T,X,\{d_X,d_Y\})
\leq a_1\overline{\rm mdim}_M(T,X,d_X) + (a_1+a_2)\overline{\rm mdim}_M(S,Y,d_Y).$$
\end{proof}

\begin{rem}\label{rem 3.19}
In general, if the factor system has positive metric mean dimension, one cannot  expect that the weighted Bowen metric mean dimension can be expressed as a  convex combination of the metric mean dimensions of the factor system and extension system.

Indeed, considering the example given in Remark  \ref{rem 3.13}, and taking $\textbf{a}=(\omega,1-\omega)$ with $0<\omega <1$,  by Theorem \ref{thm 4.13} we have
$\overline{\rm mdim}_M^{(\boldsymbol{\omega, 1-\omega}), B}({\sigma},{X^{\mathbb{Z}}},\{d_{s_1}^{\mathbb{Z}},d_{s_2}^{\mathbb{Z}}\}) \geq s_2.$
On the  other hand, let
$$D_n^{\textbf{a}}(x,y):=\max\{(d_{s_1}^{\mathbb{Z}})_{\lceil \omega n \rceil}(x,y), (d_{s_2}^{\mathbb{Z}})_{ n }(x,y)\}=(d_{s_2}^{\mathbb{Z}})_{ n }(x,y).$$
Then   a $((d_{s_2}^{\mathbb{Z}})_{ n } ,\epsilon)$-spanning set  of $X^{\mathbb{Z}}$ is also a $(D_n^{\textbf{a}},\epsilon)$-spanning set of $X^{\mathbb{Z}}$.  Using Proposition \ref{prop 4.14}, we have
$\overline{\rm mdim}_M^{\textbf{($\omega,  1-\omega$)}, B}({\sigma},{X^{\mathbb{Z}}},\{d_{s_1}^{\mathbb{Z}},d_{s_2}^{\mathbb{Z}}\}) \leq  s_2.$ 
This implies that $$\overline{\rm mdim}_M^{(\boldsymbol{\omega, 1-\omega}), B}({\sigma},{X^{\mathbb{Z}}},\{d_{s_1}^{\mathbb{Z}},d_{s_2}^{\mathbb{Z}}\})=s_2\not= \omega \cdot  {{\rm mdim}}_M(\sigma,X^{\mathbb{Z}},d_{s_1}^{\mathbb{Z}})+ (1-\omega)\cdot {{\rm mdim}}_M(\sigma,X^{\mathbb{Z}},d_{s_2}^{\mathbb{Z}}).$$
\end{rem}

Now  we are ready to prove Theorem \ref{thm 1.2}.

\begin{proof}[Proof of Theorem \ref{thm 1.2}]

It  is a direct consequence of   Corollary \ref{cor 4.9} and Theorem \ref{thm 4.13}.
\end{proof}

\begin{que}
 Let  $\pi:(X,d_X,T)\rightarrow (Y,d_Y,S)$ be a factor map. Suppose that  $\overline{\rm mdim}_M(S,Y,d_Y)>0$.  Does
$$\overline{\rm {mdim}}_M^{\textbf{($\omega$, $1-\omega$)},B}(T,X,\{d_X,d_Y\})=\overline{\rm mdim}_M^{\omega}(X,T|\pi,\{d_X,d_Y\})$$
hold  for all  $0<\omega <1$.
\end{que}

\section{relative  conditional metric mean dimension of factor maps}\label{sec 4}

In this section, we  introduce the relative  conditional metric mean dimension of factor maps, and prove Theorem \ref{thm 1.3}. 


Let $\pi:(X,T)\rightarrow (Y,S)$ be a  factor map.  Let $\UU, \mathcal{V}\in \mathcal{C}_X^o$. Put
$$N(\UU|\mathcal{V}\vee\pi):=\max\{N(\UU, V\cap \pi^{-1}(y)): V\in \mathcal{V}, y\in Y\},$$
and  let  $N(\UU|\pi):= \max_{y \in Y} N(\UU^n,\pi^{-1}(y))$.
Since $\{ \log N(\UU^n|\mathcal{V}^n\vee\pi)\}$ is a sub-additive sequence in $n$, the following limit
$$h_{top}(T,\UU|\mathcal{V}\vee\pi):=\lim_{n \to \infty}\frac{1}{n}\log N(\UU^n|\mathcal{V}^n\vee\pi)$$
exists.
If $\VV=\{X\}$, we let $h(T,\UU|\pi):= h_{top}(T,\UU|\{X\}\vee\pi)$.
 
The  \emph{relative topological conditional entropy of $X$} \cite{zz23}  is defined by
$$h^{*}_{top}(X,T|\pi)=\inf_{ \mathcal{V}\in \mathcal{C}_X^o}\sup_{\mathcal{U}\in \mathcal{C}_X^o}h_{top}(T,\UU|\mathcal{V}\vee\pi).$$

If $Y=\{y_0\}$ consists of a single point, this definition reduces to  the topological conditional entropy of $X$ \cite{mis76}; if the infimum  only takes the  open cover $\mathcal{V}=\{X\}$, the above definition reduces to  the  relative topological entropy \cite{lw97}. Hence, the relative topological conditional entropy is a fusion of this  two kind of entropies. 

Notice that,  if  $h^{*}_{top}(X,G|\pi)=\infty$, then for  $\mathcal{V}\in \mathcal{C}_X^o$, we have  $\sup_{\mathcal{U}\in \mathcal{C}_X^o}h_{top}(G,\UU|\mathcal{V}\vee\pi)=\infty$. This motivates us to introduce the following notion:

\begin{df}\label{df 4.17}
Let $\pi:(X,d_X,T)\rightarrow (Y,S)$ be a factor map. We respectively define the  relative  conditional  upper and lower  metric mean dimensions of  $\pi$ as
\begin{align*}
\overline{\rm mdim}_M^{*}(X,T|\pi,d_X)&=\limsup_{\epsilon \to 0}\frac{1}{\logf}\inf_{\diam (\UU, d_X)\leq \epsilon}h_{top}(T,\UU|\mathcal{V}\vee\pi), \\
\underline{\rm mdim}_M^{*}(X,T|\pi,d_X)&=\liminf_{\epsilon \to 0}\frac{1}{\logf}\inf_{\diam (\UU, d_X)\leq \epsilon}h_{top}(T,\UU|\mathcal{V}\vee\pi),
\end{align*}
where the infimum ranges over all finite open covers of $X$ with diameter at most $\epsilon$.
\end{df}
In particular,  by $\overline{\rm mdim}_M^{*}(T,X,d_X)$  we denote  the  \emph{topological conditional upper metric mean dimension of $X$} if  $Y$ is a single point, and  by $\overline{\rm mdim}_M(X,T|\pi,d_X)$ we  denote the \emph{relative  upper metric mean dimension  of $\pi$} if  $\mathcal{V}=\{X\}$ in  Definition \ref{df 4.17}.

The following theorem  justifies  that  $\overline{\rm mdim}_M^{*}(X,T|\pi,d_X)$ is well-defined in Definition \ref{df 4.17}, and is independent of the choice of $\VV \in \mathcal{C}_Y^o$ and the compatible metrics on $Y$.

\begin{thm}[=Theorem \ref{thm 1.3}]\label{thm 4.18}
Let $\pi:(X,d_X, T)\rightarrow (Y, S)$ be a factor map. Then
$$\overline{\rm mdim}_M^{*}(X,T|\pi,d_X)=\overline{\rm mdim}_M(X,T|\pi,d_X).$$
Consequently, we have $\overline{\rm mdim}_M^{*}(T,X,d_X)=\overline{\rm mdim}_M(T,X,d_X).$

The results are also valid for the corresponding lower metric mean dimensions.
\end{thm}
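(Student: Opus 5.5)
The plan is to prove two inequalities for an arbitrary fixed $\VV\in\mathcal{C}_X^o$ (if $\VV$ is meant as a cover of $Y$, replace it by $\pi^{-1}\VV$ and nothing below changes). The first shows that the conditional quantity is bounded by the relative one. The second shows that the relative quantity exceeds the conditional one by at most a term independent of $\epsilon$, which disappears after dividing by $\logf$. This also shows that Definition \ref{df 4.17} does not depend on $\VV$, and in particular not on any metric on $Y$.

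\emph{Step 1 (easy inequality).} Fix $\UU\in\mathcal{C}_X^o$ and $n\in\mathbb{N}$. For every $V\in\VV^n$ and $y\in Y$ we have $V\cap\pi^{-1}(y)\subset\pi^{-1}(y)$. Hence
$$N(\UU^n|\VV^n\vee\pi)\le \max_{y\in Y}N(\UU^n,\pi^{-1}(y))=N(\UU^n|\{X\}\vee\pi).$$
Taking $\frac1n\log$ and letting $n\to\infty$ gives $h_{top}(T,\UU|\VV\vee\pi)\le h(T,\UU|\pi)$; both limits exist by the sub-additivity noted before Definition \ref{df 4.17}. Taking the infimum over $\diam(\UU,d_X)\le\epsilon$, dividing by $\logf$ and passing to $\limsup$ (or $\liminf$) as $\epsilon\to0$ yields
$$\overline{\rm mdim}_M^{*}(X,T|\pi,d_X)\le\overline{\rm mdim}_M(X,T|\pi,d_X),$$
and the same inequality for the lower quantities.

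\emph{Step 2 (reverse inequality).} Fix $y\in Y$. Choose a subfamily of $\VV^n$ covering $\pi^{-1}(y)$ with at most $N(\VV^n)\le(\#\VV)^n$ members. For each member $V$, cover $V\cap\pi^{-1}(y)$ by at most $N(\UU^n|\VV^n\vee\pi)$ elements of $\UU^n$. This gives
$$N(\UU^n,\pi^{-1}(y))\le (\#\VV)^n\cdot N(\UU^n|\VV^n\vee\pi).$$
Maximizing over $y$, taking logarithms, dividing by $n$ and letting $n\to\infty$, we get
$$h(T,\UU|\pi)\le h_{top}(T,\UU|\VV\vee\pi)+\log\#\VV$$
for every $\UU\in\mathcal{C}_X^o$. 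Since $\VV$ is fixed, the constant $\log\#\VV$ does not depend on $\epsilon$. After taking the infimum over $\diam(\UU,d_X)\le\epsilon$ and dividing by $\logf$, it contributes $\log\#\VV/\logf\to0$ as $\epsilon\to0$. Hence
$$\overline{\rm mdim}_M(X,T|\pi,d_X)\le\overline{\rm mdim}_M^{*}(X,T|\pi,d_X),$$
and likewise for the lower versions. Together with Step 1 this proves the theorem for every $\VV$.

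\emph{Step 3 (consequence).} When $Y$ is a single point, $\pi^{-1}(y)=X$, so $N(\UU^n|\{X\}\vee\pi)=N(\UU^n)$ and $h(T,\UU|\pi)=h_{top}(T,\UU)$. Proposition \ref{prop 2.1} then identifies $\overline{\rm mdim}_M(X,T|\pi,d_X)$ with $\overline{\rm mdim}_M(T,X,d_X)$, and the main statement gives $\overline{\rm mdim}_M^{*}(T,X,d_X)=\overline{\rm mdim}_M(T,X,d_X)$.

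The only point needing care is Step 2. The trivial bound $(\#\VV)^n$ suffices precisely because $\VV$ is fixed while only $\UU$ is refined, which is why metric mean dimension, unlike entropy, does not register the conditioning. I also need the limits defining $h(T,\UU|\pi)$ and $h_{top}(T,\UU|\VV\vee\pi)$ to exist, which is given by the sub-additivity already recorded in the paper.
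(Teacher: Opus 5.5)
Your proposal is correct and follows essentially the same route as the paper. The easy inequality comes from $V\cap\pi^{-1}(y)\subset\pi^{-1}(y)$. The reverse inequality comes from $N(\UU^n|\pi)\le N(\VV^n|\pi)\cdot N(\UU^n|\VV^n\vee\pi)$, where the $\VV$-factor grows at an exponential rate independent of $\epsilon$ and so vanishes after dividing by $\log\frac{1}{\epsilon}$. The only cosmetic difference is that you bound that rate by $\log\#\VV$, whereas the paper uses $h_{top}(T,\VV|\pi)\le h_{top}(T,\VV)<\infty$.
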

\begin{proof}
We only need to show that for every $\mathcal{V}\in \mathcal{C}_X^o$,
\begin{align}\label{equ 4.1}
\limsup_{\epsilon \to 0}\frac{1}{\logf}\inf_{\diam (\UU, d_X)\leq \epsilon}h_{top}(T,\UU|\mathcal{V}\vee\pi)=\overline{\rm mdim}_M(X,T|\pi,d_X),
\end{align}
since the  results for the corresponding lower metric mean dimensions are similarly obtained.

Fix $\mathcal{V}\in \mathcal{C}_X^o$.
It is clear that  $$\limsup_{\epsilon \to 0}\frac{1}{\logf}\inf_{\diam (\UU, d_X)\leq \epsilon}h_{top}(T,\UU|\mathcal{V}\vee\pi)\leq \overline{\rm mdim}_M(X,T|\pi,d_X).$$ 
For every $\mathcal{U}\in \mathcal{C}_X^o$ and $n \in \mathbb{N}$,  it holds that 
$$N(\UU^n|\pi)\leq N(\mathcal{V}^n|\pi)\cdot N(\UU^n|\mathcal{V}^n\vee \pi).$$
This implies that $$h_{top}(T,\UU|\pi)\leq h_{top}(T,\VV|\pi)+h_{top}(T,\UU|\mathcal{V}\vee\pi). $$ 
 Since $h_{top}(T,\VV|\pi) \leq h_{top}(T,\VV)$  is  finite, taking the infimum $\inf_{\diam (\UU, d_X)\leq \epsilon}\limits$ in the both sides of the above inequality, this yields the reverse inequality $$\overline{\rm mdim}_M(X,T|\pi,d_X)\leq \limsup_{\epsilon \to 0}\frac{1}{\logf}\inf_{\diam (\UU, d_X)\leq  \epsilon}h_{top}(T,\UU|\mathcal{V}\vee\pi).$$
\end{proof}

\begin{rem}
The proof of Theorem \ref{thm 4.18} also shows that  the complexity provided by the open cover  $\VV$ of $X$
only contributes a finite amount of topological entropy, and hence
does not lead to an increase of the relative metric mean dimension. This explains why the relative  metric mean dimension coincide with  the relative  conditional metric mean dimension.
\end{rem}

\section{An Abramov-Rokhlin formula for  random  metric mean dimension}\label{sec 5}

In this section,  we introduce random average  metric mean dimension  for  a  certain Lipschitz factor map of  random dynamical systems, and prove Theorem \ref{thm 1.4}. 


\subsection{Random  metric mean dimension and examples} 

We begin by recalling the setup of random  dynamical systems.
Let $(X, d)$ be a  compact metric space, and let $(\Omega, \mathcal{F}, \mathbb{P}, \theta)$ be  a measure-preserving system that preserves a $\theta$-invariant probability measure $\mathbb{P}$ on $\Omega$.  Equip $\Omega \times X$ with the product algebra $\mathcal{F}\otimes \mathcal{B}_X$.

A map  $T:  \mathbb{Z}\times \Omega \times X \rightarrow X$ is called a \emph{random  dynamical system} over the measure-preserving system $(\Omega, \mathcal{F}, \mathbb{P}, \theta)$ if  $T$ is measurable such that  the homeomorphisms $\{T_{\omega}^n: X\rightarrow X\}_{n\in \mathbb{Z},\omega\in \Omega}$ satisfy the cocycle property: for all  $\omega \in \Omega$, $m,n \in \mathbb{Z}$,  $$T_{\omega}^{n+m}(x)=T_{\theta^n \omega}^{m}\circ T_{\omega}^{n}(x),$$ where $T_{\omega}=T(1,\omega, \cdot)$ and the iteration is given by
\begin{align*}
T_{\omega}^n:=
\begin{cases}
T_{\sigma^{n-1}\omega}\circ\cdots\circ T_{\sigma\omega} \circ T_{\omega},  &\mbox{for}~n\geq 1\\
id,&\mbox{for}~n=0\\
(T_{\sigma^{n}\omega})^{-1}\circ\cdots\circ (T_{\sigma^{-2}\omega})^{-1}\circ (T_{\sigma^{-1}\omega})^{-1},  &\mbox{for}~n\leq-1.
\end{cases}
\end{align*}

In particular, if $\Omega=\{\omega_0\}$ is  exactly a single point, the above random  dynamical system  is reduced to  a  TDS $(X,d,T_{\omega_0})$. For every fixed $\omega \in \Omega$, $\{T_\omega^n\}_n$ is a family of homeomorphisms from $X$ to $X$. We call $(X,\{T_\omega^n\})$ a  non-autonomous dynamical system. The measure-preserving system $(\Omega, \mathcal{F}, \mathbb{P}, \theta)$ is  called  a \emph{driving system} of  $(X,T)$. One can associate  a   \emph{skew product transformation} $\Theta$ on $\Omega\times X$  by  letting  $\Theta(\omega,x)=(\theta\omega,T_{\omega}(x))$.   The  theory of random dynamical dynamics can be found in the  Arnold's monograph \cite{arn98}.  


To understand  the statistical and dynamical properties of  random dynamical systems, the concept of  random topological entropy was introduced in  \cite{kif86,bog92}.
For $n  \in \mathbb{N}$ and  $\omega \in \Omega$,  we define  a family $\{d_n^\omega\}_{n\geq 1,\omega \in \Omega}$ of  Bowen metrics  on $X$ by 
$$d_{n}^{\omega}(x,y)=\max_{0\leq j\leq n-1 }d(T_{\omega}^jx, T_{\omega}^jy),$$ 
where $x,y \in X$. Denote by  $s_n(X,d,\omega,\epsilon)$, $r_n(X,d,\omega,\epsilon)$ the maximal  cardinality  of {$(d_n^\omega,\epsilon)$-separated sets} of $X$, and the  minimal  cardinality  of {$(d_n^\omega,\epsilon)$-spanning sets} of $X$, respectively.

The \emph{random  topological entropy of $X$} is defined by
\begin{align*}
h_{\mathbb{P}}^{(r)}(T,X)&=\lim_{\epsilon \to 0}\limsup_{n\to \infty}\frac{ 1}{n}\int \log r_{n}(X,d,\omega,\epsilon) d \mathbb{P}(\omega),\\
&=\lim_{\epsilon \to 0}\limsup_{n\to \infty}\frac{1}{n}\int \log s_{n}(X,d,\omega,\epsilon) d \mathbb{P}(\omega).
\end{align*}
The random topological entropy of $X$ depends on the invariant measure $\mathbb{P}$ of the driving system,  while is independent of the choice of  the   compatible metrics on  $X$.

\begin{df}\label{df 4.21}
The  random  upper metric mean dimension  of $X$ \cite{myc19, wcy25} is defined by 
\begin{align*}
\mathbb{E}_{\mathbb{P}}\overline{{\rm mdim}}_M(T,X,d)&=\limsup_{\epsilon \to 0}\frac{1}{\logf}\limsup_{n\to \infty}\frac{ 1}{n}\int \log r_{n}(X,d,\omega,\epsilon) d \mathbb{P}(\omega),\\
&=\limsup_{\epsilon \to 0}\frac{1}{\logf}\limsup_{n\to \infty}\frac{1}{n}\int \log s_{n}(X,d,\omega,\epsilon) d \mathbb{P}(\omega).
\end{align*}
\end{df}

Similarly, the  random  lower metric mean dimension of $X$, denoted by $\mathbb{E}_{\mathbb{P}}\underline{{\rm mdim}}_M(T,X,d)$, is defined by changing $\limsup_{\epsilon \to 0}$  into  $\liminf_{\epsilon \to 0}$. 
We provide two examples of random dynamical systems and estimate their  random  upper and lower metric mean dimensions.

\begin{ex}\label{ex 5.3}
Let $(\Omega, \theta)$ be a TDS with a  $\theta$-invariant  Borel probability measure $\mathbb{P}$ on $\Omega$, and let $(X, ||\cdot||_{X})$ be a  compact real normed linear  space (e.g. $k$-dimensional torus $\mathbb{R}^k/\mathbb{Z}^k$).  Then the product space $X^{\mathbb{Z}}$, endowed  with the usual addition and multiplication operations,  is  a  compact normed linear space  in the sense  of the following norm:
$$||x-y||=\sum_{n\in \mathbb{Z}}\frac{||x_n-y_n||_X}{2^{|n|}},$$
where  $x=(x_n)_{n\in \mathbb{Z}}, y= (y_n)_{n\in \mathbb{Z}}\in X^{\mathbb{Z}}$.

Let $\sigma$ be the left shift map on $X^{\mathbb{Z}}$. Then it is a continuous linear operator since
$||\sigma(x)-\sigma (y)||\leq 2 ||x-y||.$ Hence, $(X^{\mathbb{Z}}, ||\cdot||, \sigma)$ is a TDS. Assume that $h:\Omega \rightarrow X^{\mathbb{Z}}$  is a measurable map. We define a family of continuous disturbances $\{T_{\omega}\}_{\omega}$ of $\sigma$ by letting $$T_{\omega}(x):=\sigma(x)+h(\omega)$$ for all $\omega \in \Omega$. Observe  that for any $1\leq j <n$,
$$T_{\omega}^j(x)=\sigma^j(x)+\sum_{m=0}^{j-1}\sigma^{j-1-m}h(\theta^m \omega).$$
It turns out that for every $n\in \mathbb{N}$ and $\omega\in \Omega$, $$||x-y||_{n}^{\omega}:=\max_{0\leq j <n}||T_{\omega}^j(x)-T_{\omega}^j(y)||=\max_{0\leq j <n}||\sigma^j(x)-\sigma^j(y)||=||x-y||_n.$$
Then, using  (\ref{equ 2.1}),  for any $\mathbb{P}\in M(\Omega, \theta)$ we have 
\begin{align*}
\mathbb{E}_{\mathbb{P}}\overline{{\rm mdim}}_M(T,X^{\mathbb{Z}},||\cdot||)=\overline{\rm mdim}_M(\sigma, X^{\mathbb{Z}},||\cdot||)=\overline{\rm dim}_B(X, ||\cdot||_X),\\
\mathbb{E}_{\mathbb{P}}\underline{{\rm mdim}}_M(T,X^{\mathbb{Z}},||\cdot||)=\underline{\rm mdim}_M(\sigma, X^{\mathbb{Z}},||\cdot||)=\underline{\rm dim}_B(X, ||\cdot||_X).
\end{align*}
\end{ex}

\begin{ex}\label{ex 5.4}
Let  $\Omega=\{1,2\}^{\mathbb{Z}}$,  and let $\theta: \Omega \rightarrow \Omega$ be the left shift map. Then $\theta$ preserves an ergodic Bernoulli probability measure $\mathbb{P}$ generated by a probability vector $(\frac{1}{2},\frac{1}{2})$ that  assigns $\frac{1}{2}$ to  each symbol equivalently. Suppose that $(X,d)$ is a compact  metric space. The product space  $X^{\mathbb{Z}}$ is metrizable by  $$D(x,y)=\sum_{n\in \mathbb{Z}}\frac{d(x_n,y_n)}{2^{|n|}}.$$ 
 $T_1:=\sigma$ and $T_2:=\sigma^2$ are  two  left shifts defined  on $X^{\mathbb{Z}}$.  Given  $\omega=(\omega_n)_{n\in \mathbb{Z}} \in \Omega$,  we define a continuous self-map on $X^{\mathbb{Z}}$ given by $T_{\omega}(x)=T_{\omega_0}(x)$. Then the random metric mean  dimension of $X$ is bounded by its box dimension:
\begin{align*}
\overline{\rm dim}_B(X, d)\leq \mathbb{E}_{\mathbb{P}}\overline{{\rm mdim}}_M(T,X^{\mathbb{Z}},D)\leq \frac{3}{2}\overline{\rm dim}_B(X, d),\\
\underline{\rm dim}_B(X, d)\leq \mathbb{E}_{\mathbb{P}}\underline{{\rm mdim}}_M(T,X^{\mathbb{Z}},D)\leq \frac{3}{2}\underline{\rm dim}_B(X, d).
\end{align*}

\end{ex}

\begin{proof}
We only show the first inequality since the second one can be obtained by  the similar argument.

Given  $\omega \in \Omega$ and $n \in \mathbb{N}$, denote by $\varphi_n(1,\omega)$    the cardinality  of $1$  appearing  in the first $n$ coordinates  of $\omega$ from $0$ to $n-1$. Let $\varphi_n(2,\omega)=n-\varphi_n(1,\omega)$.  We  rewrite the Bowen metric $D_n^{\omega}$ on $X^{\mathbb{Z}}$  as
$$D_n^{\omega}(x,y)=\max_{0\leq j <n}D(\sigma^{\varphi_j(1,\omega)+2\varphi_j(2,\omega)}(x), \sigma^{\varphi_j(1,\omega)+2\varphi_j(2,\omega)}(y)).$$
Fix  arbitrary a point $y_0 \in X$. Assume that $E$ is a  $(d,\epsilon)$-separated set of $X$ with the  largest cardinality  $s(X,d,\epsilon)$. 
Let $$E_{n,\omega}:=\{(x_m)_{m \in \mathbb{Z}}: x_m\in E ~~\text{for}~ m=\varphi_j(1,\omega)+2\varphi_j(2,\omega), j=0,...,n-1~\text{and}~x_m=y_0 ~\text{for other }~m  \}$$
 Then the set  $E_{n,\omega}$ is a $( D^{\omega}_n,\epsilon)$-separated set of $X^{\mathbb{Z}}$, and hence $s_n(X^{\mathbb{Z}},D,\omega,\epsilon)\geq  s(X,d,\epsilon)^n$. This implies that $$\overline{\rm dim}_B(X, d)\leq \mathbb{E}_{\mathbb{P}}\overline{{\rm mdim}}_M(T,X^{\mathbb{Z}},D).$$

Now fix $\epsilon >0$ and choose a positive integer $n_0$ sufficiently large such that $$\sum_{|n|\geq n_0}\frac{\diam (X,d)}{2^{|n|}}<\frac{\epsilon}{2}.$$
Assume that  $F$ is a  $(d,\frac{\epsilon}{2})$-spanning set of $X$ with the  smallest cardinality  $r(X,d,\frac{\epsilon}{2})$.   Define 
$$F_{n,\omega}:=\{(x_m)_m: x_j\in F ~\text{for}~ -n_0<j \leq\varphi_n(1,\omega)+2\varphi_n(2,\omega)+n_0, ~\text{and}~y_0~\text{for other} ~m\}.$$
It is easy to see that  the set $F_{n,\omega}$ is a $(D_n^\omega,\epsilon)$-spanning set of $X^{\mathbb{Z}}$. Therefore,
\begin{align*}
&\limsup_{n\to \infty}\frac{ 1}{n}\int \log r_{n}(X^{\mathbb{Z}},D,\omega,\epsilon) d \mathbb{P}(\omega)\\
\leq & \limsup_{n\to \infty}\frac{ 1}{n\cdot 2^n}\sum_{\omega \in \{1,2\}^n}(\varphi_n(1,\omega)+2\varphi_n(2,\omega)+2 n_0) \log r(X,d,\frac{\epsilon}{2})\\
= & \log r(X,d,\frac{\epsilon}{2}) \limsup_{n\to \infty}\frac{ 1}{n\cdot 2^n}\sum_{j=0}^{n}\binom{n}{j}(n-j+2j)\\
=&\frac{3}{2} \log r(X,d,\frac{\epsilon}{2}),
\end{align*}
where  $\sum_{j=0}^{n}\binom{n}{j}(n+j)=n 2^n+ n \sum_{j=1}^{n}\binom{n-1}{j-1}=3n\cdot 2^{n-1}$.
Then this inequality allows us to  conclude that
$$\mathbb{E}_{\mathbb{P}}\overline{{\rm mdim}}_M(T,X^{\mathbb{Z}},D)\leq \frac{3}{2}\overline{\rm dim}_B(X, d).$$
\end{proof}

\begin{rem}
Let  $\mathbb{P}_1=\delta_{\omega_1}=\delta_{(1,1.,,,)}$, and  let  $\mathbb{P}_2=\delta_{\omega_2}=\delta_{(2,2.,,,)}$ as in Example \ref{ex 5.4}. One can  show that  \begin{align*}
\mathbb{E}_{\delta_{\omega_1}}\overline{{\rm mdim}}_M(T,X^{\mathbb{Z}},D)&=\overline{\rm dim}_B(X, d),\\
\mathbb{E}_{\delta_{\omega_2}}\overline{{\rm mdim}}_M(T,X^{\mathbb{Z}},D)&=2\cdot \overline{\rm dim}_B(X, d).
\end{align*}
Hence, we emphasize that the random  metric mean dimension not only depends on the compatible metrics on $X$, but also is dependent of the choice of the invariant measure $\mathbb{P}$ on  $\Omega$.
\end{rem}

\subsection{Random average metric mean dimension}

From now on,  we assume that $(\Omega,d_{\Omega})$  in our setting is a  compact metric space such that ${\rm mdim}_M(\theta,\Omega,d_{\Omega})<\infty$.

To  give a positive  answer to  Question 3, using the information of $\Omega$ and $X$,  we introduce  \emph{random average  metric mean dimension} to quantify the level of  disturbance on $\Omega \times X$ caused by the dynamics of $\Omega$ and $X$.   The definition is highly inspired  by the  Kifer's random topological entropy \cite{kif01}, Bufetov's  topological entropy for semi-groups \cite{b99} and the work \cite{rgy25}. Let $$\mathcal{E}:=\{\Omega_{n,\epsilon}\}_{n \in \mathbb{N},\epsilon >0}$$ be a collection of finite subsets of $\Omega$ such that each $\Omega_{n.\epsilon}$ is a $((d_\Omega)_n,\epsilon)$-separated set of $\Omega$ with  the largest cardinality $s(\Omega,(d_{\Omega})_n,\epsilon)$.  For $n\in \mathbb{N}$ and $\epsilon >0$,  we define
\begin{align*}
\overline{s}_n(T,\epsilon):&=\frac{1}{s(\Omega,(d_{\Omega})_n,\epsilon)}\sum_{\omega \in \Omega_{n,\epsilon}} s_n(X,d_X,\omega, \epsilon),\\
\overline{r}_n(T,\epsilon):&=\frac{1}{s(\Omega,(d_{\Omega})_n,\epsilon)}\sum_{\omega \in \Omega_{n,\epsilon}} r_n(X,d_X,\omega, \epsilon),
\end{align*}
and 
\begin{align*}
\overline{s}(T,X,\{d_{\Omega},d_X\}, \mathcal{E},\epsilon)=\limsup_{n\to \infty} \frac{1}{n} \log \overline{s}_{n}(T,\epsilon),\\
\overline{r}(T,X,\{d_{\Omega},d_X\}, \mathcal{E},\epsilon)=\limsup_{n\to \infty} \frac{1}{n} \log \overline{r}_{n}(T,\epsilon).
\end{align*}

\begin{df}\label{df 5.5}
The random  average   upper metric mean dimension of $X$ is defined by 
\begin{align*}
\overline{\rm Amdim}_M(T,X,\{d_{\Omega},d_X\})&=\limsup_{\epsilon \to 0}\frac{1}{\logf}\overline{s}(T,X,\{d_{\Omega},d_X\}, \mathcal{E},\epsilon),\\
&=\limsup_{\epsilon \to 0}\frac{1}{\logf}\overline{r}(T,X,\{d_{\Omega},d_X\}, \mathcal{E},\epsilon).
\end{align*}
\end{df}

Similarly,   replacing  $\limsup_{\epsilon \to 0}$  by   $\liminf_{\epsilon \to 0}$ one can define the  corresponding random  average lower metric mean dimension $\underline{\rm Amdim}_M(G,X,\{d_{\Omega},d_X\})$ of $X$.

Compared with  Definition \ref{df 4.21},  the ``average"  in the definition of   random  metric mean dimension is  the integral of  spanning sets or separated sets of $X$ against an invariant  measure $\mathbb{P}$  on  $\Omega$,  while the ``average" in random average   metric mean dimension  refers to the arithmetical average of spanning sets or separated sets of $X$ over  a certain finite set of $\Omega$.  Such a formulation has a greater advantage in calculating the precise random metric mean dimension.

\begin{thm}[= Theorem \ref{thm 1.4}]\label{thm 5.6}
Under the  above setting, we have the following  Abramov-Rokhlin formulae for random  metric mean dimension:
\begin{align*}
\overline{\rm mdim}_M(G,\Omega \times X, d_{\Omega }\times d_X)&= {\rm mdim}_M(G,\Omega, d_{\Omega})+ \overline{\rm Amdim}_M(G,X,\{d_{\Omega},d_X\},\\
\underline{\rm mdim}_M(G,\Omega \times X, d_{\Omega }\times d_X)&= {\rm mdim}_M(G,\Omega, d_{\Omega})+ \underline{\rm Amdim}_M(G,X,\{d_{\Omega},d_X\}.
\end{align*}

\end{thm}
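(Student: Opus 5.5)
The plan is to compare separated and spanning sets of $(\Omega\times X, (d_\Omega\times d_X)_n)$ with products of separated and spanning sets of the base and the fibers. I will use the max product metric, so that
$$(d_\Omega\times d_X)_n\big((\omega,x),(\omega',x')\big)=\max\{(d_\Omega)_n(\omega,\omega'),\ \max_{0\le j<n} d_X(T^j_\omega x,T^j_{\omega'}x')\}.$$
The key step is to establish, for each $\epsilon>0$ and $n$, two counting inequalities. The first is the lower bound
$$s\big(\Omega\times X,(d_\Omega\times d_X)_n,\epsilon\big)\ \ge\ \sum_{\omega\in\Omega_{n,\epsilon}} s_n(X,d_X,\omega,\epsilon)=s(\Omega,(d_\Omega)_n,\epsilon)\,\overline{s}_n(T,\epsilon).$$
The second is an upper bound of the form
$$r\big(\Omega\times X,(d_\Omega\times d_X)_n,c\epsilon\big)\ \le\ \sum_{\omega\in\Omega_{n,\epsilon}} r_n(X,d_X,\omega,\epsilon)=s(\Omega,(d_\Omega)_n,\epsilon)\,\overline{r}_n(T,\epsilon)$$
for a fixed constant $c$. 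Taking $\frac1n\log$ and $\limsup_n$, then dividing by $\log\frac1\epsilon$ and letting $\epsilon\to0$, gives both inequalities of the formula. The $\Omega$-factor contributes $\frac{1}{\log(1/\epsilon)}\limsup_n\frac1n\log s(\Omega,(d_\Omega)_n,\epsilon)$, and this tends to ${\rm mdim}_M(\theta,\Omega,d_\Omega)$ along every sequence $\epsilon\to0$ because that limit exists and is finite. This is exactly why the hypothesis $\overline{\rm mdim}_M=\underline{\rm mdim}_M<\infty$ on $\Omega$ is needed. With it, the $\limsup$ (respectively $\liminf$) of the sum splits as the limit plus the $\limsup$ (respectively $\liminf$) of the averaged fiber term, giving both the upper and lower versions. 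The fact that $\limsup_n$ of a product is at most the product of the $\limsup$s is harmless for the upper bound. For the lower bound, one needs $\limsup_n\frac1n\log(a_nb_n)\ge\lim\inf_n\frac1n\log a_n+\limsup_n\frac1n\log b_n$, and this is handled by the same finiteness and existence of the $\Omega$-rate.

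For the lower counting bound, take the points $(\omega,x)$ with $\omega\in\Omega_{n,\epsilon}$ and $x$ ranging over a maximal $(d^\omega_n,\epsilon)$-separated set $E_\omega$. Two such points with distinct $\omega$ are $\epsilon$-separated already in the first coordinate. Two with the same $\omega$ are separated in the $X$-coordinate because the fiber maps coincide. So the union is $(d_\Omega\times d_X)_n$-$\epsilon$-separated, and its cardinality is the sum above.

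For the upper counting bound, maximality of $\Omega_{n,\epsilon}$ makes it $((d_\Omega)_n,\epsilon)$-spanning. Given $(\omega',x')$, choose $\omega\in\Omega_{n,\epsilon}$ with $(d_\Omega)_n(\omega,\omega')\le\epsilon$ and a point $x$ in a minimal $(d^\omega_n,\epsilon)$-spanning set $F_\omega$. One then needs to control $d_X(T^j_\omega x,T^j_{\omega'}x')$ for $j<n$.

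\textbf{Main obstacle.} The orbit of $x'$ is driven by $\omega'$, not by $\omega$, so the fiber comparison requires some continuity of $\omega\mapsto T_\omega$. I would address this by assuming, or extracting from the setting where $\Omega$ is a TDS, a uniform modulus: $d_\Omega(\omega,\omega')\le\epsilon$ implies $\sup_x d_X(T_\omega x,T_{\omega'}x)\le\epsilon$. I would then propagate along the orbit, using that the Bowen-ball condition controls each step at scale $\epsilon$. If this cannot be done cleanly, the fallback is to replace the pair by the point $(\omega',\cdot)$ and use the $(d^{\omega'}_n,\epsilon)$-spanning set. That, however, requires comparing $r_n(X,d_X,\omega',\epsilon)$ with $r_n(X,d_X,\omega,\epsilon)$ for nearby $\omega,\omega'$, again a continuity requirement. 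Either way, this comparison is the delicate step. It is also where the constant $c$ (such as $2$ or $3$) enters, and that constant is harmless after dividing by $\log\frac1\epsilon$.
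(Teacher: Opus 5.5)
Your lower bound is the paper's argument: a union over $\omega\in\Omega_{n,\epsilon}$ of maximal $(d^\omega_n,\epsilon)$-separated sets, then $\limsup(a_n+b_n)\ge\liminf a_n+\limsup b_n$, using that the $\liminf_n$-rate of $\Omega$ converges to ${\rm mdim}_M(\theta,\Omega,d_\Omega)$. It is fine.

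\textbf{The upper bound is a genuine gap, and you located it exactly.} The paper's proof only asserts $r(\Omega\times X,(d_\Omega\times d_X)_n,\epsilon)\le\sum_{\omega\in\Omega_{n,\epsilon}}r_n(X,d_X,\omega,\epsilon)$ because $\Omega_{n,\epsilon}$ is spanning. This silently replaces $T^j_{\omega'}x'$ by $T^j_\omega x'$. Your repair does not close the gap:
\begin{itemize}
\item The setting only gives measurability in $\omega$, so no modulus can be extracted from it.
\item Even granting $\sup_x d_X(T_\omega x,T_{\omega'}x)\le d_\Omega(\omega,\omega')$, propagating along the orbit fails. An error made at step $m$ is then transported, and possibly expanded, by $T_{\theta^{m+1}\omega},T_{\theta^{m+2}\omega},\dots$, while $d^\omega_n(x,x'')\le\epsilon$ only compares two orbits driven by the same $\omega$. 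For instance, take $T_\omega x=\sigma x+\omega$ on $\mathbb{T}^{\mathbb{Z}}$ with the standard product metric. Each step satisfies your modulus with equality, yet $T^j_\omega x'-T^j_{\omega'}x'=j\,\sigma^{j-1}(\omega-\omega')$. For $\omega-\omega'\equiv\epsilon$ this has $d_X$-size $3\|j\epsilon\|$, which is of order $1$ when $j\approx 1/(2\epsilon)$.
\item Your fallback has the same defect. What must be bounded is the spanning number of the whole tube $\pi_\Omega^{-1}(B_n(\omega,\epsilon))$. That is a Bowen-type fiber estimate, and the identity-map example in Section~\ref{sec 2} already shows such estimates are not automatic for metric mean dimension.
\end{itemize}

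\textbf{No repair is possible in the stated generality.} The inequality ``$\le$'' fails even for a continuous skew product. Take $\Omega=X=\mathbb{T}^{\mathbb{Z}}$, $\theta=\sigma$, $T_\omega x=\sigma x+\omega$, and $\mathbb{P}$ Haar measure. Let $\|\cdot\|$ be the distance to $\mathbb{Z}$, and set $d_\Omega(\omega,\omega')=\sum_n2^{-|n|}\|\omega_n-\omega'_n\|$ and $d_X(x,y)=\sum_n2^{-|n|}\|x_n-y_n\|^{1/2}$.
\begin{itemize}
\item Right-hand side. By \eqref{equ 2.1}, ${\rm mdim}_M(\theta,\Omega,d_\Omega)=1$. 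Since $d_X$ is translation invariant, $d^\omega_n=(d_X)_n$ for every $\omega$. So both random average metric mean dimensions equal ${\rm mdim}_M(\sigma,X,d_X)=2$ for every choice of $\mathcal{E}$, and the right-hand side is $3$.
\item Left-hand side. One has $T^j_\omega x=\sigma^jx+j\sigma^{j-1}\omega$, so coordinate $i$ of the difference of fiber components at time $j$ is $\Delta x_{i+j}+j\Delta\omega_{i+j-1}$. Put $a_j=\Delta x_k+j\Delta\omega_{k-1}$. Reading off time $k$, coordinate $0$, and time $k-1$, coordinate $1$, the Bowen distance is at least $\max\{\|a_k\|^{1/2},\tfrac12\|a_{k-1}\|^{1/2}\}$. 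Moreover $a_k-a_{k-1}=\Delta\omega_{k-1}$.
\item Separated set. Take the points with $\omega_0,\dots,\omega_{n-2}$ in a $9\epsilon^2$-grid, $x_1,\dots,x_{n-1}$ in a $2\epsilon^2$-grid, and all other coordinates $0$. They form a $((d_\Omega\times d_X)_n,\epsilon)$-separated set of cardinality about $(\epsilon^{-4}/18)^{n-1}$. Hence $\underline{\rm mdim}_M(\Theta,\Omega\times X,d_\Omega\times d_X)\ge 4>3$.
\end{itemize}

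So only ``$\ge$'' holds in general. Equality needs a hypothesis that controls the tube. One sufficient condition is that $(d_\Omega)_n(\omega,\omega')\le\epsilon$ forces $T^j_\omega=T^j_{\omega'}$ for $j<n$ and all small $\epsilon$. This holds in Example~\ref{ex 5.4}, where the fiber map depends only on $\omega_0$, and under it the paper's counting goes through verbatim.
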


\begin{proof}
We  solely show the first equality since the second one can be obtained similarly.

Fix a family  $\mathcal{E}:=\{\Omega_{n,\epsilon}\}_{n \in \mathbb{N},\epsilon >0}$ of  the $((d_\Omega)_n,\epsilon)$-separated sets of $\Omega$  with  the largest cardinality $s(\Omega,(d_{\Omega})_n,\epsilon)$. For each $\omega \in \Omega_{n,\epsilon}$ and $n \in \mathbb{N}$, if $E_{n,\omega}$ is a $( d_{n}^\omega,\epsilon)$-separated set of $X$, then  $\bigcup_{\omega\in \Omega_{n,\epsilon}}\bigcup_{x\in E_{n,\omega}}\{(\omega,x)\}$ is a $((d_{\Omega}\times d_X)_{n},\epsilon)$-separated set of $\Omega \times X$. Hence,
\begin{align*}
s(\Omega \times X, (d_{\Omega}\times d_X)_{n},\epsilon)\geq \sum_{\omega\in \Omega_{n,\epsilon}} s_{n}(X,d_X,\omega, \epsilon)
= s(\Omega,(d_{\Omega})_{n},\epsilon)\cdot \overline{s}_{n}(T,\epsilon).
\end{align*}
Using the  inequality: $\limsup_{n \to \infty} (a_n+b_n) \geq \limsup_{n \to\infty} a_n+ \liminf_{n \to\infty}b_n$ for any two  sequences $\{a_n\}$ and $\{b_n\}$ of real numbers, this implies that
\begin{align}\label{equ 5.1}
&\limsup_{n \to \infty}\frac{1}{n}\log s(\Omega \times X, (d_{\Omega}\times d_X)_{n},\epsilon)\\
\geq& \liminf_{n \to \infty}\frac{1}{n}\log s(\Omega, (d_{\Omega})_{n},\epsilon)+ \overline{s}(T,X,\{d_{\Omega},d_X\}, \mathcal{E},\epsilon). \nonumber
\end{align}
In \cite[Proposition 3.4]{yz25}, the authors proved  that $${\rm mdim}_M(\theta,\Omega, d_{\Omega})=\lim_{\epsilon \to 0}\frac{1}{\logf}\liminf_{n \to \infty}\frac{1}{n}\log s(\Omega, (d_{\Omega})_{n},\epsilon).$$
Combining this fact with  (\ref{equ 5.1}), we have
\begin{align*}
\overline{\rm mdim}_M(\Theta,\Omega \times X, d_{\Omega }\times d_X)\geq  {\rm mdim}_M(\theta,\Omega, d_{\Omega})+ \overline{\rm Amdim}_M(T,X,\{d_{\Omega},d_X\}).
\end{align*}

Notice that a $((d_{\Omega})_n,\epsilon)$-separated set of $\Omega$  with the largest cardinality is also a
$((d_{\Omega})_n,\epsilon)$-spanning set of $\Omega$. Then  we have
\begin{align*}
r(\Omega\times X, (d_{\Omega}\times d_X)_{n},\epsilon)\leq  \sum_{\omega\in \Omega_{n,\epsilon}} r_{n}(X,d_X,\omega, \epsilon)
= s(\Omega,(d_{\Omega})_{n},\epsilon)\cdot \overline{r}_{n}(T,\epsilon).
\end{align*}
After taking the corresponding limits, we  similarly obtain that
\begin{align*}
\overline{\rm mdim}_M(\Theta,\Omega \times X, d_{\Omega }\times d_X)\leq  {\rm mdim}_M(\theta,\Omega, d_{\Omega})+ \overline{\rm Amdim}_M(T,X,\{d_{\Omega},d_X\}).
\end{align*}
\end{proof}

\begin{rem}
$(1)$. By  Theorem \ref{thm 5.6}, the random average   upper metric mean dimension of $X$ can be equivalently  given by 
\begin{align*}
\overline{\rm Amdim}_M(T,X,\{d_{\Omega},d_X\})
=\overline{\rm mdim}_M(\Theta,\Omega \times X, d_{\Omega }\times d_X)- {\rm mdim}_M(\theta,\Omega, d_{\Omega}).
\end{align*}
We conclude that  Definition \ref{df 5.5} is well-defined, which does not depend on  the choice of  the family $\mathcal{E}$.

$(2)$.  Since $\pi_{\Omega}: \Omega \times X \rightarrow \Omega$ is one-Lipschtiz,  $h_{top}(\Theta,\Omega\times X)=\infty$  if $h_{top}(\theta,\Omega)=\infty$.  The Abramov-Rokhlin formula of random  metric mean dimension presented in Theorem \ref{thm 5.6} gives  additional  information about the  disturbance on $\Omega \times X$ caused by the dynamics of $\Omega$ and $X$.
\end{rem}

The metric mean dimensions of the driving system, the skew product  and the  non-autonomous dynamical systems  are related by the following inequality.

\begin{cor}\label{cor 5.8} 
Let $(\Omega, d_{\Omega}, \theta)$ be a TDS, and let $T = (T_{n,\omega})$ be a random dynamical system over the measure-preserving system $(\Omega, \mathcal{B}(\Omega), \mathbb{P}, \theta)$. Then 
\begin{align*}
&\overline{\rm mdim}_M(\Theta, \Omega \times X, d_{\Omega }\times d_X)\\
\leq&  {\rm mdim}_M(\theta,\Omega, d_{\Omega})+\limsup_{\epsilon \to 0}\frac{1}{\logf}\limsup_{n\to\infty}\frac{1}{n} \log  \sup_{\omega \in \Omega} s_{n}(X,d_X,\omega, \epsilon).
\end{align*}
 This inequality is also valid for corresponding lower metric mean dimensions by  exchanging  $\limsup_{\epsilon \to 0}$ into $\liminf_{\epsilon \to 0}$,.
\end{cor}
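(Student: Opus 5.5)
The plan is to derive Corollary \ref{cor 5.8} directly from Theorem \ref{thm 5.6}, by bounding the random average upper metric mean dimension from above by the uniform (supremum) quantity. Fix the family $\mathcal{E}=\{\Omega_{n,\epsilon}\}$ of maximal $((d_\Omega)_n,\epsilon)$-separated sets used in Definition \ref{df 5.5}. For every $n\in\mathbb{N}$ and $\epsilon>0$, the arithmetic average over $\Omega_{n,\epsilon}$ is dominated by the maximum, so
\begin{align*}
\overline{s}_n(T,\epsilon)=\frac{1}{s(\Omega,(d_{\Omega})_n,\epsilon)}\sum_{\omega \in \Omega_{n,\epsilon}} s_n(X,d_X,\omega, \epsilon)\leq \sup_{\omega\in\Omega} s_n(X,d_X,\omega,\epsilon).
\end{align*}
The right-hand side is finite, since $s_n(X,d_X,\omega,\epsilon)$ is bounded by the $n$-fold product of the cardinality of a finite $\epsilon/2$-cover of $X$ (each $d_n^\omega$-ball structure is controlled coordinatewise). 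It is therefore legitimate to take $\frac1n\log$, then $\limsup_{n\to\infty}$, divide by $\logf$, and take $\limsup_{\epsilon\to0}$. This yields
\begin{align*}
\overline{\rm Amdim}_M(T,X,\{d_{\Omega},d_X\})\leq \limsup_{\epsilon \to 0}\frac{1}{\logf}\limsup_{n\to\infty}\frac{1}{n} \log  \sup_{\omega \in \Omega} s_{n}(X,d_X,\omega, \epsilon).
\end{align*}

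Next I would substitute this into the first equality of Theorem \ref{thm 5.6}, which expresses $\overline{\rm mdim}_M(\Theta,\Omega\times X,d_\Omega\times d_X)$ as ${\rm mdim}_M(\theta,\Omega,d_\Omega)+\overline{\rm Amdim}_M(T,X,\{d_\Omega,d_X\})$. That substitution gives the claimed inequality. Since the standing assumption of this subsection is ${\rm mdim}_M(\theta,\Omega,d_\Omega)<\infty$, no $\infty-\infty$ issue arises. If the corollary is read without that hypothesis, the inequality is trivial whenever the right side is infinite. Alternatively, one can bypass Theorem \ref{thm 5.6} and repeat its upper-bound half directly: the bound $r(\Omega\times X,(d_\Omega\times d_X)_n,\epsilon)\le s(\Omega,(d_\Omega)_n,\epsilon)\sup_\omega r_n(X,d_X,\omega,\epsilon)$, together with $r_n\le s_n$, needs only that the relevant limit for $\Omega$ exists.

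For the lower version, I would replace $\limsup_{\epsilon\to0}$ by $\liminf_{\epsilon\to0}$ throughout and use the second equality of Theorem \ref{thm 5.6}. The pointwise-in-$\epsilon$ inequality survives $\liminf$ just as it does $\limsup$. The only point needing care is the handling of limits: $\limsup_n\frac1n\log$ of the sum of the $\Omega$-term and the $X$-term is at most the sum of the limits, where the $\Omega$-term converges after normalization by \cite[Proposition 3.4]{yz25}. I expect this bookkeeping to be the only mild obstacle; the main inequality is simply average $\le$ supremum.
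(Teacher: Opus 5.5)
Your main derivation is the paper's own proof: bound the average $\overline{s}_n(T,\epsilon)$ by $\sup_{\omega}s_n(X,d_X,\omega,\epsilon)$, then substitute into the first identity of Theorem~\ref{thm 5.6}. Your finiteness remark is correct. The gap is in what sits underneath this step. Your ``direct'' alternative,
\[
r(\Omega\times X,(d_\Omega\times d_X)_n,\epsilon)\le s(\Omega,(d_\Omega)_n,\epsilon)\,\sup_{\omega\in\Omega} r_n(X,d_X,\omega,\epsilon),
\]
is exactly the step, via $\sum_{\omega\in\Omega_{n,\epsilon}}r_n(X,d_X,\omega,\epsilon)$, on which the upper half of Theorem~\ref{thm 5.6} rests in the paper. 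It does not hold in general.

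Given $(\omega',x')$, you pick $\omega\in\Omega_{n,\epsilon}$ with $(d_\Omega)_n(\omega,\omega')\le\epsilon$ and $x$ with $d^{\omega}_n(x,x')\le\epsilon$. But the skew-product Bowen distance requires $d_X(T^j_\omega x,T^j_{\omega'}x')\le\epsilon$, and you only control $d_X(T^j_\omega x,T^j_\omega x')$. The term $d_X(T^j_\omega x',T^j_{\omega'}x')$ is not controlled, because nearby base points carry different fibre maps. Measurability in $\omega$ gives nothing here, and even continuity in $\omega$ only gives control at some finer base scale $\delta(\epsilon)$, not at scale $\epsilon$. So routing through Theorem~\ref{thm 5.6} does not yield a valid proof either.

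In fact the asserted inequality can fail, even for a continuous skew product.
\begin{itemize}
\item Let $\Omega=[0,1]^{\mathbb{Z}}$ with the metric of \eqref{equ 2.1} and the shift, so ${\rm mdim}_M(\theta,\Omega,d_\Omega)=1$.
\item Let $X=\mathbb{R}/\mathbb{Z}$ and $T_\omega x=x+g(\omega_0)$, where $g:[0,1]\to[0,\frac14]$ is continuous and its graph has upper box dimension $D>1$ (for example, a rescaled Weierstrass-type function).
\item The fibre maps are rotations, so $s_n(X,d_X,\omega,\epsilon)=s(X,d_X,\epsilon)$ for all $n$ and $\omega$. Hence the right-hand side of the corollary equals $1$, and $\overline{\rm Amdim}_M=0$.
\end{itemize}
Now choose $P\subset[0,1]$ so that the points $(t,g(t))$, $t\in P$, are $3\epsilon$-separated in the max metric. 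For $w\in P^n$, let $\omega^w$ have $w_0,\dots,w_{n-1}$ in positions $0,\dots,n-1$ and $0$ elsewhere. The points $(\omega^w,0)$ are $((d_\Omega\times d_X)_{n+1},\epsilon)$-separated. Indeed, at the first index $m$ with $w_m\neq w'_m$, one of two things happens:
\begin{itemize}
\item $d_\Omega(\theta^m\omega^w,\theta^m\omega^{w'})\ge|w_m-w'_m|>3\epsilon$; or
\item the fibre coordinates $\sum_{i<j}g(w_i)$ coincide at time $m$ and differ by $|g(w_m)-g(w'_m)|>3\epsilon$ at time $m+1$.
\end{itemize}
This gives $s(\Omega\times X,(d_\Omega\times d_X)_{n+1},\epsilon)\ge |P|^n$, hence $\overline{\rm mdim}_M(\Theta,\Omega\times X,d_\Omega\times d_X)\ge D>1$.

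So an extra hypothesis is needed. For instance, suppose that for all small $\epsilon$, $T^j_\omega=T^j_{\omega'}$ for $j<n$ whenever $(d_\Omega)_n(\omega,\omega')\le\epsilon$. This holds, e.g., when $\Omega$ is a full shift on finitely many symbols and $T_\omega$ depends only on $\omega_0$. Under such a hypothesis your covering bound is valid, and either of your routes proves the corollary.
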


\begin{proof}
It directly follows from  Theorem \ref{thm 5.6} and the fact  $\overline{s}_n(T,\epsilon)\leq  \sup_{\omega \in \Omega} s_{n}(X,d_X,\omega, \epsilon).$  
\end{proof}

If the driving system admits  zero metric mean dimension,  then the  random average  metric mean dimension exactly coincides with the metric mean dimension of skew product.
\begin{cor}\label{cor 5.7}
Let $(\Omega, d_{\Omega}, \theta)$ be a TDS with  zero  upper metric mean dimension, and let $T = (T_{n,\omega})$ be a random dynamical system over the measure-preserving system $(\Omega, \mathcal{B}(\Omega), \mathbb{P}, \theta)$. Then
\begin{align*}
\overline{\rm Amdim}_M(G,X,\{d_{\Omega},d_X\})&=\overline{\rm mdim}_M(\Theta,\Omega \times X, d_{\Omega }\times d_X),\\
\underline{\rm Amdim}_M(G,X,\{d_{\Omega},d_X\})&=\underline{\rm mdim}_M(\Theta,\Omega \times X, d_{\Omega }\times d_X).
\end{align*}
\end{cor}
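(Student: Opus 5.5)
This follows directly from Theorem \ref{thm 5.6}. The hypothesis $\overline{\rm mdim}_M(\theta,\Omega,d_\Omega)=0$ gives $0\le \underline{\rm mdim}_M(\theta,\Omega,d_\Omega)\le \overline{\rm mdim}_M(\theta,\Omega,d_\Omega)=0$. So the upper and lower metric mean dimensions of the driving system coincide and ${\rm mdim}_M(\theta,\Omega,d_\Omega)=0$. In particular it is finite, which is the standing assumption under which the random average metric mean dimension was introduced and Theorem \ref{thm 5.6} was proved. The plan is therefore to substitute this value into the two Abramov--Rokhlin formulae of Theorem \ref{thm 5.6}.

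Concretely, the first formula of Theorem \ref{thm 5.6} reads
\begin{align*}
\overline{\rm mdim}_M(\Theta,\Omega\times X,d_\Omega\times d_X)={\rm mdim}_M(\theta,\Omega,d_\Omega)+\overline{\rm Amdim}_M(T,X,\{d_\Omega,d_X\}).
\end{align*}
Setting ${\rm mdim}_M(\theta,\Omega,d_\Omega)=0$ gives the upper equality. The lower formula of Theorem \ref{thm 5.6} gives the lower equality in the same way, since it involves the same (vanishing) quantity ${\rm mdim}_M(\theta,\Omega,d_\Omega)$.

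The only point requiring care is that Theorem \ref{thm 5.6} was stated and proved using the existence of ${\rm mdim}_M(\theta,\Omega,d_\Omega)$ as a genuine limit. In the proof, the lower bound uses the identity
\begin{align*}
{\rm mdim}_M(\theta,\Omega,d_\Omega)=\lim_{\epsilon\to0}\frac{1}{\logf}\liminf_{n\to\infty}\frac1n\log s(\Omega,(d_\Omega)_n,\epsilon)
\end{align*}
from \cite[Proposition 3.4]{yz25}. As a safeguard I would note that this is unproblematic when the quantity is zero. Nonnegativity and $\overline{\rm mdim}_M(\theta,\Omega,d_\Omega)=0$ force both the $\liminf_n$ and the $\limsup_n$ versions, divided by $\logf$, to tend to $0$. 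Hence both directions of the argument in Theorem \ref{thm 5.6} go through verbatim with that term equal to zero. I do not expect any real obstacle beyond this bookkeeping remark.
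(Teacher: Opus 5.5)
Your reduction is exactly the paper's route. Corollary~\ref{cor 5.7} is stated without a separate proof, as the case ${\rm mdim}_M(\theta,\Omega,d_\Omega)=0$ of Theorem~\ref{thm 5.6}. Your remark that vanishing upper metric mean dimension forces the lower one and the $\liminf_n$ variant to vanish is also correct.

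The trouble is the only substantive direction, $\overline{\rm mdim}_M(\Theta,\Omega\times X,d_\Omega\times d_X)\le\overline{\rm Amdim}_M(T,X,\{d_\Omega,d_X\})$. It is imported from the upper-bound half of Theorem~\ref{thm 5.6}, and that half is not valid. Its proof claims $r(\Omega\times X,(d_\Omega\times d_X)_n,\epsilon)\le\sum_{\omega\in\Omega_{n,\epsilon}}r_n(X,d_X,\omega,\epsilon)$. But take a point $(\omega',x')$ with $\omega'$ in the $(d_\Omega)_n$-ball of some $\omega\in\Omega_{n,\epsilon}$: it is moved by the fibre maps $T^j_{\omega'}$, not $T^j_{\omega}$. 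A $(d^{\omega}_n,\epsilon)$-spanning set of $X$ therefore gives no control of $d_X(T^j_{\omega'}x',T^j_{\omega}x)$, and the fibre dynamics can amplify differences between $\omega$ and $\omega'$ lying far below scale $\epsilon$. The lower-bound half, which gives $\ge$, is fine.

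In fact the corollary is false as stated; here is a counterexample.
\begin{itemize}
\item[(1)] \emph{Base.} Let $C=\{0,1\}^{\mathbb{N}}$ with the ultrametric $\rho(a,b)=2^{-2^{n(a,b)}}$, where $n(a,b)$ is the first index at which $a,b$ differ. Then $\overline{\rm dim}_B(C,\rho)=0$. By (\ref{equ 2.1}), the shift $\theta$ on $\Omega=C^{\mathbb{Z}}$, with the metric of (\ref{equ 2.1}), has zero upper metric mean dimension.
\item[(2)] \emph{Fibre.} Let $X=\mathbb{R}/\mathbb{Z}$ and $T_\omega x=x+f(\omega_0)$, where $f(a)=\sum_{i\ge 0}a_i2^{-i-1}$ is continuous for $\rho$. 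Then $\Theta$ is a homeomorphism and every $T^j_\omega$ is a rotation. Hence $s_n(X,d_X,\omega,\epsilon)=s(X,d_X,\epsilon)$ and $\overline{\rm Amdim}_M(T,X,\{d_\Omega,d_X\})=0$.
\item[(3)] \emph{Separated set.} Fix $\epsilon=2^{-N}$. Consider the $2^{N(n-1)}$ points $(\omega,0)$ in which $\omega_0,\dots,\omega_{n-2}$ are arbitrary binary words of length $N$ followed by zeros, and all other coordinates are identically $0$. These points are $((d_\Omega\times d_X)_n,\epsilon/2)$-separated. Indeed, let $k\le n-2$ be the first index with $\omega_k\neq\omega'_k$. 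The fibre coordinates of $\Theta^{k+1}(\omega,0)$ and $\Theta^{k+1}(\omega',0)$ differ by $f(\omega_k)-f(\omega'_k)$. This is a nonzero multiple of $2^{-N}$ in $(-1,1)$, so its circle distance is at least $\epsilon$.
\item[(4)] \emph{Conclusion.} It follows that $\overline{\rm mdim}_M(\Theta,\Omega\times X,d_\Omega\times d_X)\ge 1\neq 0$.
\end{itemize}

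So no bookkeeping can close your argument. In general only $\overline{\rm mdim}_M(\Theta,\Omega\times X,d_\Omega\times d_X)\ge\overline{\rm Amdim}_M(T,X,\{d_\Omega,d_X\})$ holds. Equality needs extra hypotheses, for instance that $T_{\theta^j\omega'}=T_{\theta^j\omega}$ for $j<n$ whenever $(d_\Omega)_n(\omega,\omega')\le\epsilon$, as in the setting of Example~\ref{ex 5.4}.
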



Finally, we use Theorem \ref{thm 5.6} to  calculate  the  random average  metric mean dimension of   Examples  \ref{ex 5.3} and \ref{ex 5.4}.  It turns out that  the  two  different types of random   metric mean dimensions  behave rather differently  in  terms of the different meaning of the ``average".
\begin{ex}
$(1)$ As in   Example \ref{ex 5.3},  we assume that  ${\rm mdim}_M(\theta,\Omega,d_{\Omega})<\infty$. Notice that for every $n \in \mathbb{N}$, $||x-y||_{n}^\omega=||x-y||_n$ for all $\omega \in \Omega$. Then we have
$$(d_{\Omega}\times ||\cdot||)_{n}((\omega_1,x_1),(\omega_2,x_2))=\max\{(d_{\Omega})_n(\omega_1,\omega_2), ||x_1-x_2||_n\}.$$ 
By  (\ref{equ 2.1}), we have 
$$\overline{\rm mdim}_M(\Theta,\Omega \times X^{\mathbb{Z}}, d_{\Omega}\times ||\cdot||)= {\rm mdim}_M(\theta,\Omega,d_{\Omega})+\overline{\rm dim}_B(X, ||\cdot||_X),$$
where $\Theta(\omega,x)=(\theta \omega, T_{\omega}x)$ is the skew  product transformation. Combining this equality with Theorem \ref{thm 5.6} and Example \ref{ex 5.3}, we have
$$\overline{\rm Amdim}_M(T,X^{\mathbb{Z}},\{d_{\Omega},||\cdot||\})=\overline{\rm dim}_B(X, ||\cdot||_X)=\mathbb{E}_{\mathbb{P}}\overline{{\rm mdim}}_M(T,X^{\mathbb{Z}},||\cdot||).$$

Recall that  in \cite{rj22}  the upper metric mean dimension of the  non-autonomous dynamical system $(X^{\mathbb{Z}}, \{T_{\omega}^n\}_n)$ is given by
$$\overline{\rm mdim}_M(X^{\mathbb{Z}}, \{T_{\omega}^n\}_n, ||\cdot||)=\limsup_{\epsilon \to 0}\limsup_{n \to \infty}\frac{1}{n} \log r(X,||x-y||_{n}^\omega,\epsilon).$$
Using the fact $||x-y||_{n}^\omega=||x-y||_n$ again, for every $\omega \in \Omega$ we have  $\overline{\rm mdim}_M(X^{\mathbb{Z}}, \{T_{\omega}^n\}_n, ||\cdot||)=\overline{\rm dim}_B(X, ||\cdot||_X),$
and hence
$$\overline{\rm mdim}_M(\Theta,\Omega \times X^{\mathbb{Z}}, d_{\Omega }\times ||\cdot||)=\max_{\omega \in \Omega} \limits \overline{\rm mdim}_M(X^{\mathbb{Z}},\{T_{\omega}^n\}_n, ||\cdot||).$$

$(2)$  As in   Example \ref{ex 5.4}, we assume  that $ \overline{\rm dim}_B(X,d)>0$.  Let $$d_{\Omega}(\omega_1,\omega_2)=(\frac{1}{2})^{n(\omega_1,\omega_2)},$$ where $n(\omega_1,\omega_2):=\min\{|n|: \omega_n^1\not=\omega_n^2\}$. The Corollary \ref{cor 5.7} implies that $$\overline{\rm Amdim}_M(T,X^{\mathbb{Z}},\{d_{\Omega},D\})=\overline{\rm mdim}_M(\Theta,\Omega \times X^{\mathbb{Z}}, d_{\Omega }\times D).$$

We claim that  $$\overline{\rm mdim}_M(\Theta,\Omega \times X^{\mathbb{Z}}, d_{\Omega }\times D)= 2\cdot  \overline{ \rm dim}_B(X,d).$$

To see this, consider the constant sequence $\omega=(...,2,2,...)  \in \Omega$.  In this case,  $T_{\omega}^n(x)=(\sigma^2)^{n-1}(x): X^{\mathbb{Z}}\rightarrow X^{\mathbb{Z}}$.  Then $\{\omega\} \times X^{\mathbb{Z}}$ is  a $\Theta$-invariant closed subset of $\Omega \times X^{\mathbb{Z}}$.  Then, by (\ref{equ 2.1}) we have $$\overline{\rm mdim}_M(\Theta,\Omega \times X^{\mathbb{Z}}, d_{\Omega }\times D)\geq  \overline{{\rm mdim}}_M(\sigma^2,X^{\mathbb{Z}},D)=2\cdot \overline{\rm dim}_B(X,d),$$
where the equality is guaranteed by  $\overline{{\rm mdim}}_M(\sigma^2,X^{\mathbb{Z}},D)=2\cdot \overline{{\rm mdim}}_M(\sigma,X^{\mathbb{Z}},D)$ since $ \sigma$ is a Lipschitz map \cite[Proposition 3.9]{yz25}. On the other hand, fix $0<\epsilon <1$ and  assume that $\Omega_{n,\epsilon}$ is a $((d_{\Omega})_n,\epsilon)$-separated set of $\Omega$ with the largest cardinality. Observe that $\varphi_n(1,\omega)+2\varphi_n(2,\omega)\leq 2n$ for all $\omega \in \Omega$.
Similar to the proof of Example \ref{ex 5.4}, one has
\begin{align*}
\overline{r}_n(T,\epsilon)= \frac{1}{s(\Omega,(d_{\Omega})_n,\epsilon)}\sum_{\omega \in \Omega_{n,\epsilon}} r_n(X^{\mathbb{Z}},D,\omega, \epsilon)
\leq   r(X,d,\frac{\epsilon}{2})^{2(n+n_0)},
\end{align*}
which implies that $\overline{\rm Amdim}_M(T,X^{\mathbb{Z}},\{d_{\Omega},D\})\leq 2\cdot \overline{ \rm dim}_B(X,d)$. Hence, the claim is verified. 

Consequently,  we know that $$\mathbb{E}_{\mathbb{P}}\overline{{\rm mdim}}_M(T,X^{\mathbb{Z}},D) \leq \frac{3}{2}\overline{\rm dim}_B(X, d)<\overline{\rm Amdim}_M(T,X^{\mathbb{Z}},\{d_{\Omega},D\}).$$
Furthermore, we  have 
$$\overline{\rm mdim}_M(\Theta,\Omega \times X^{\mathbb{Z}}, d_{\Omega }\times D)=\max_{\omega \in \Omega} \overline{\rm mdim}_M(X^{\mathbb{Z}},\{T_{\omega}^n\}_n, D).$$
\end{ex}

\section*{Acknowledgement} 
 
This work was  supported by  the China Postdoctoral Science Foundation (No. 2024M763856) and  the Postdoctoral Fellowship Program of CPSF  (No. GZC20252040).  




\end{document}